\documentclass[pdflatex,sn-mathphys-num]{sn-jnl}

\usepackage{graphicx}%
\usepackage{multirow}%
\usepackage{amsmath,amssymb,amsfonts}%
\usepackage{amsthm}%
\usepackage{mathrsfs}%
\usepackage[title]{appendix}%
\usepackage{xcolor}%
\usepackage{textcomp}%
\usepackage{manyfoot}%
\usepackage{booktabs}%
\usepackage{algorithm}%
\usepackage{algorithmicx}%
\usepackage{algpseudocode}%
\usepackage{listings}%

\usepackage[utf8]{inputenc}

\usepackage[normalem]{ulem}   

\theoremstyle{thmstyleone}%
\newtheorem{theorem}{Theorem}[section]

\newtheorem{lemma}[theorem]{Lemma}
\newtheorem{corollary}[theorem]{Corollary} 

\theoremstyle{thmstyletwo}%
\newtheorem{rmk}{Remark}%

\theoremstyle{thmstylethree}%
\newtheorem{cona}{Conditions}
\newtheorem{conb}{Conditions}[cona]
\newenvironment{con}[1]{
	\renewcommand\theconb{#1}
	\conb
}{\endconb}

\newcommand{\R}{{\mathbb R}}
\newcommand{\Z}{{\mathbb Z}}

\renewcommand{\div}{\text{$\nabla\cdot$}}
\newcommand{\grad}{\text{$\nabla$}}
\newcommand{\bs}[1]{{\boldsymbol  #1}}
\newcommand{\bb}[1]{{\mathbf  #1}}

\newcommand{\Af}{A}
\newcommand{\N}{N}
\newcommand{\HH}{{\mathcal H}}

\newtheorem{exa}{Example}[section]

\usepackage{hyperref}
\hypersetup{
	colorlinks=true,
	linkcolor=blue,
	citecolor=cyan,
	filecolor=magenta,      
	urlcolor=cyan,
}

\DeclareMathOperator{\supp}{supp}

\allowdisplaybreaks  

\begin{document}

\title[Uniqueness for TV-regularized inverse problems in divergence form]{Uniqueness for TV-regularized  inverse problems\\
  with source in divergence form}

\author[1]{\fnm{Laurent} \sur{Baratchart}}\email{Laurent.Baratchart@inria.fr}
\equalcont{These authors contributed equally to this work.}

\author[2]{\fnm{Douglas P.} \sur{Hardin}}\email{doug.hardin@vanderbilt.edu}
\equalcont{These authors contributed equally to this work.}

\author*[3]{\fnm{Cristóbal} \sur{Villalobos Guillén}}\email{cristobal.villalobos.guillen@univie.ac.at}
\equalcont{These authors contributed equally to this work.}

\affil[1]{\orgdiv{Projet FACTAS}, \orgname{INRIA}, \orgaddress{\street{2004 route des Lucioles, BP 93}, \city{Sophia-Antipolis}, \postcode{06902 Cedex}, \country{FRANCE}}}

\affil[2]{\orgdiv{Department of Mathematics}, \orgname{Vanderbilt University}, \orgaddress{\city{Nashville}, \postcode{37240}, \state{TN}, \country{USA}}}

\affil*[3]{\orgdiv{Faculty of Mathematics}, \orgname{University of Vienna}, \orgaddress{\street{Oskar-Morgenstern-Platz 1}, \city{Wien}, \postcode{1090}, \country{Austria}}}

\abstract{
Inverse  source problems in divergence form consist in finding a vector field
with prescribed support S, whose divergence is the Laplacian of some observed potential.
In this paper, we assume
the unknown vector field is a vector-valued measure, and we study the corresponding
least-squares inversion problems, regularized by penalizing the total variation, without
discretizing the criterion nor the unknown. We prove that this problem has a unique minimizer in the case where S is a slender set; i.e., it has zero Lebesgue measure and each
connected component of its complement has infinite Lebesgue measure. The proof relies on a refinement of
 Smirnov's decomposition of divergence-free measures \cite{Smi94}  which is of independent interest.
}

\keywords{inverse problems, second order elliptic PDE, divergence-free measures, total variation regularization}

\maketitle

%
%
%

\section{Introduction}
Roughly speaking, inverse source problems with source in divergence form
consist in reconstructing 
an $\R^N$-valued function or distribution knowing the potential of its divergence (or the field of that potential).  The potential here is understood  with respect to some uniformly elliptic differential operator on $\R^N$ that may be the Euclidean Laplacian or a more general operator in divergence form, possibly with non-smooth coefficients.
From the point of view of geometric analysis,  the problem  is to
recover the unknown function or distribution from the gradient part
of its  Helmholtz
decomposition, relative to some Riemannian metric.

From a physical point of view such problems appear in various
contexts including
 Geomagnetism,  Paleomagnetism,  as well as
Medical Imaging from Electro-Encephalography (EEG) and Electro-Cardiography (ECG) (see, e.g. \cite{Blakely, Parker, backus96,
  Fuller,WLF,Sarvas,KreKuPo}). In these examples, the unknown is typically a current or  magnetization distribution, and the problem is severely ill-posed
as the solution is highly non-unique   while data can only be collected
on a limited portion of space, away from the support.
This makes prior assumptions and regularization techniques
an essential aspect of every consistent computational approach.  
In this paper, we assume that
the unknown distribution is a  vector-valued \emph{measure}
and we study, from a mathematical point of view,  the corresponding infinite-dimensional least squares inversion
problem, regularized by  penalizing  the measure-theoretic total variation
(possibly with the adjunction of a weight).

Specifically, let $\mathcal{M}(S)^N$ denote $\R^N$-valued Borel measures supported on a given compact set $S\subset\R^N$, and $\HH$  be a separable Hilbert space where observational data lies. We consider a linear {\em forward operator} $\Af:\mathcal{M}(S)^N \to \HH$ that is sequentially weak-star to strong continuous  and
whose null space consists 
of divergence-free measures, with
adjoint operator $A^*$ mapping $\mathcal{H}$ into
Lipschitz gradients, see Conditions \ref{conU} in Section~\ref{Sect2.1}; that $A^*$ is valued in gradients follows from the null-space assumption on $A$ if for instance $S$ is quasiconvex and that $A^*$ is valued in Lipschitz functions follows for instance if $\Af$ is continuous in the Kantorovich-Rubinstein norm \cite{Hanin92}.

For  $f\in  \HH$ and   $\lambda>0$,   we study  minimizers of the functional
\begin{equation}
	\label{eq|main}
	\mathcal{F}_{f,\lambda}(\bs{\mu})=\mathcal{F}(\bs{\mu}):=
	\|f-\Af\bs{\mu}\|_{\HH}^2+\lambda \|\sigma\bs{\mu}\|_{TV}
\end{equation}
where $\bs{\mu}$ ranges over  $\mathcal{M}(S)^N$,  $\|\bs{\mu}\|_{TV}$  denotes the (Euclidean) total variation of  $\bs{\mu}$, and $\sigma$ is a positive Lipschitz continuous weight function on $S$ reflecting ancillary information on
the support and magnitude of $\bs{\mu}$. In the language of inverse problems the number $\lambda$ in \eqref{eq|main} is
termed the \emph{regularization parameter}, and its choice reflects the
sensitivity  of the residuals $\|f-\Af\bs{\mu}\|_{\HH}^2$
to small changes of a candidate minimizer $\bs{\mu}$.
The importance of solving a regularized formulation like
\eqref{eq|main} to approach inverse problems
in spaces of measures (possibly with inaccurate data $f$) has been much
emphasized in recent years; see for example  \cite{ModRegReview2018}.

We remark
that $\mathcal{F}$ is not strictly convex when $A$ has a nontrivial null space,
which happens whenever $S$ supports nontrivial divergence-free measures.  The main result in this paper  (Theorem~\ref{thm|uniqueness}) asserts that under the previous assumptions, the minimizer of $\mathcal{F}$ is unique in spite of this
lack of strict convexity.  The proof makes use of another result
of independent interest (Theorem~\ref{prop|smi_tra}) which goes as follows.
If
$\bs{\mu}\in\mathcal{M}(S)^N$ then the Radon-Nikodym derivative
$d\bs{\mu}/d|\bs{\mu}|$ exists, where $|\bs{\mu}|$
denotes the total variation measure; the representation $d\bs{\mu}=(d\bs{\mu}/d|\bs{\mu}|)d|\bs{\mu}|$ is the \emph{polar decomposition} of $\bs{\mu}$.
  Now, if $\bs{\mu}$ 
is divergence free, then the unit-norm vector field $d\bs{\mu}/d|\bs{\mu}|$,
which is defined $|\bs{\mu}|$-almost everywhere,  admits Lipschitz continuous integral curves  through $|\bs{\mu}|$-almost every point (such curves are generally non-unique).
The demonstration relies on Smirnov's decomposition of divergence-free
measures \cite{Smi94}.

The relevance of our main result to  the inverse problem is that uniqueness of a minimizer is key to optimize \eqref{eq|main}, for it turns subsequentially convergent algorithms into sequentially convergent ones\footnote{
For example the draft at https://hal.science/hal-05008293
sets up weak-star subsequentially convergent
discretization schemes based on the LASSO to solve \eqref{eq|main}, and
by  Theorem \ref{thm|uniqueness} they are
in fact weak-star sequentially convergent.}.
In the particular case where $S$ is contained in a plane and the forward operator $A$ is the $x_3$-derivative of the Newton potential of $\mathrm{div}\bs{\mu}$,
uniqueness of a minimizer had already been proven in \cite{BVH} when $\sigma=1$.

In connection with the previous discussion, it is worth pointing out
a property which is more specific to our case: if $\bs{\mu}$ has purely 1-unrectifiable support
and  $f_n$ tends to $A\bs{\mu}$ while $\lambda_n$ tends to $0$
with  $\|A\bs{\mu}-f_n\|_{\mathcal{H}}/\lambda_n^{1/2}\to0$, then
`the minimizers $\bs{\mu}_n$  of $\mathcal{F}_{f_n,\lambda_n}$
converge weak-star to $\bs{\mu}$ and the total variation measures 
$|\bs{\mu}_n|$ to $|\bs{\mu}|$.
This is established in \cite[Theorem 4.3]{BVHNS} (when $\sigma\equiv1$) for inverse source problems in divergence form of the Laplacian when the source is
supported on a slender set (see further below), and the proof carries over at once to our more general setting. In view of \cite[Equation (37)]{BVHNS},
an equivalent nonasymptotic formulation is: if $\mathrm{d}$ denotes any metric defining the weak-star topology
on the closed ball centered at $\bs{0}$ with radius $1+\|\bs{\mu}\|_{TV}$ in
$\mathcal{M}(S)^N$, then to each $\varepsilon>0$ there are $\lambda_0>0$ and
$\eta\in(0,1)$ such that $\max\{\mathrm{d}(\bs{\mu},\bs{\mu}_{f,\lambda}),\mathrm{d}(|\bs{\mu}|,|\bs{\mu}_{f,\lambda}|)\}<\varepsilon$ as soon as $\lambda<\lambda_0$
    and $\|f-A\bs{\mu}\|<\eta\lambda^{1/2}$. In other words, 
minimizing \eqref{eq|main} while applying the Morozov discrepancy rule
is a weak-star consistent regularization method  with respect to both the
unknown measure and its total variation, at least  when the unknown measure has purely 1-unrectifiable support. The latter condition may be viewed as a definition of sparsity in the present,  infinite-dimensional context (note that it
includes  measures supported in countably many points), and the sparse recovery result just mentioned
motivates our study of criterion \eqref{eq|main}.

In the electromagnetic applications mentioned
earlier,  the forward operator $A$ arises from an elliptic partial differential
equation and the  observational data is  the restriction of
the solution (or its gradient)  to a portion of the space. To address this case,
one needs to devise assumptions on $S$ and the coefficients of the equation that
imply Conditions~\ref{conU}, so as to be able to use  
Theorem~\ref{thm|uniqueness}. This piece of elliptic
regularity theory is perhaps non-standard, as the authors could not find
in the literature a discussion of Equation \eqref{eqelf} for  non-smooth coefficients
when $\bs{\mu}$ is a $\R^N$-valued \emph{measure}.  Appendix \ref{A} contains an existence and uniqueness result 
with some corollaries 
for very weak solutions  on $\R^N$ to this equation  that vanish at infinity (Theorem \ref{soldim1}),  which may be interesting in their own.  They team up with
known unique continuation results in Appendix \ref{UC}
to produce assumptions  on the geometry and the
observational data,  as well as smoothness requirements on Equation \eqref{eqelf}
that imply Conditions \ref{conU} and are in particular
met in two scenarios that can accommodate inverse EEG and scanning magnetic microscopy problems respectively \cite{DLMT,LiWe,FuLiVoTru};
see Conditions \ref{conE} and Scenarios (S.1), (S.2) in Section \ref{CondEUsec}.
Conditions \ref{conE} require that the set $S$ supporting the
unknown source is \emph{slender}; the notion of slenderness appears in \cite{BVHNS} which deals with inverse source problems for
the Laplacian, and it is instrumental to make sure that
the null space of $A$ is comprised only of divergence-free measures.
In the present setting where $S$ is compact, slenderness amounts to
$S$ having Lebesgue measure zero and connected complement.

As shown in Theorem \ref{soldim1} under suitable regularity assumptions,
the solution to
Equation \eqref{eqelf}  can be expressed  as a Bochner integral of the
gradient of the fundamental solution against $d\bs{\mu}$. This allows one to recast our
forward operator as a Bochner  integral operator with  kernel, and this has
become a
common framework in recent, \emph{off-the-grid} approaches to sparse
reconstruction of scalar measures; see \cite{DuPe,CaDuPe,LaBlAu,Chizat}. Compared to these references, that typically
aim at recovering a finite linear combination of Dirac masses, the
kernel in our case is less smooth
(Lipschitz rather than $C^{[N/2]+1}$ as an $L^2$-valued function) and 
the source condition is generally not satisfied (already for the Laplacian, see \cite[Section 4]{BVHNS}), but this is compensated in part by the  
additional structure of our $\R^N$-valued kernel (it is a gradient) which
eventually allows us to prove uniqueness of a minimizer in \eqref{eq|main}.

The paper is organized as follows. After fixing notation in Section \ref{preln},
we discuss inverse problems with source in divergence form in Section \ref{Sect2.1}, and state  our main result as well as Conditions \ref{conU}
which are sufficient for the former to apply. Section \ref{secDF} introduces
(one version of) Smirnov's decomposition of divergence-free measures, and shows the
existence of integral curves to the unit field arising from their polar decomposition. In Section \ref{sec|Uni} we prove our main result: the uniqueness of a minimizer to \eqref{eq|main}, and in Section \ref{CondEUsec} we set up
sufficient conditions for the solution
of an elliptic PDE of the form \eqref{eqelf} to generate (by restriction
of that function or its gradient to a
subset of the ambient space) an operator $A$ satisfying Conditions \ref{conU};
two scenarios are emphasized in which these conditions are satisfied.
Finally, Appendix \ref{A} contains existence, uniqueness and regularity results for (very weak) solutions to Equation \eqref{eqelf} vanishing at infinity,
while Appendix \ref{UC} adapts known unique continuation results to the present setting and Appendix \ref{Rgrad} deals with quasi-gradients, showing in particular  that they are gradients on a quasi-convex compact subset of $\R^N$.

\section{Preliminaries and notation}
\label{preln}
In    Euclidean space $\R^N$, we write $\bs{x}=(x_1,\cdots,x_N)^t$ to display the components of $\bs{x}$
(a  superscript ``$t$'' means ``transpose'') and
$|\bs{x}|$, $\langle \bs{x},\bs{y}\rangle$ for norm of $\bs{x}$ and
the scalar product of $\bs{x}$ and $\bs{y}$, respectively. 
As a convention, we will typically write elements of $\R^N$ as well as $\R^N$-valued functions and measures with bold symbols to facilitate the distinction with their scalar counterparts.
 We let $B(\bs{z},r)$ be the open ball centered at  $\bs{z}$ with radius $r$
and   $\mathbb{S}(\bs{z},r)$  its  boundary sphere.
We put  $\mathrm{dist}\,(E, F)$ for the Euclidean distance between 
sets $E,F\subset\R^N$, or simply  $\mathrm{dist}\,(\bs{z}, F)$  when
$E=\{\bs{z}\}$. The symbol $\partial E$ denotes
the boundary of $E\subset\R^N$. We write $f{|_E}$ for  the restriction of a function $f$ to a subset $E$ of its domain, and $\chi_F$ for the characteristic function of a set $F$.

A set in $\R^N$ is {\em rectifiably-connected} if any two points thereof can be joined by a rectifiable arc in the set. 
Every $E\subset\R^\N$ can be partitioned into {\em rectifiably connected components} formed by the equivalence relation $z_1\equiv z_2$ if they can be connected by a rectifiable curve in $E$. A rectifiably connected set is 
{\em quasi-convex} if there is a constant $C$ such that
the arc joining any two points $z_1$, $z_2$ within the set may be chosen of length less than  $C|z_1-z_2|$. We note that the closure of a quasi-convex set is rectifiably-connected, therefore the rectifiably connected components of $E$ are
relatively closed if only they are quasi-convex.

Given $E\subset\R^N$ and $F\subset\R^m$, let  $C(E,F)$  denote the continuous functions on $E$ valued in $F$; we simply write $C(E)$ for $C(E,\R)$. 
When $E$ is locally compact,  we  put  $C_0(E,\R^m)$ (or simply $C(E)$ if $m=1$) for
those $\bs{f}\in C(E,\R^m)$ such that, to each $\varepsilon>0$, there is a compact $K\subset E$ with $|\bs{f}(x)|<\varepsilon$ if $x\in E\setminus K$ (hence $C_0(E,\R^m)=C(E,\R^m)$ when $E$ is compact).
We topologize $C_0(E,\R^m)$  with the sup of the Euclidean norm, which makes it
a  Banach space, and we note that it is the closure in $C(E,\R^m)$
of $C_c(E,\R^m)$, the space of compactly supported continuous functions on $E$.


For $E\subset\R^N$ a Borel measurable set, we let
$\mathcal{M}(E)$ denote the finite signed Borel measures on $\R^\N$  carried by $E$. The space
$\mathcal{M}(E)^m$ identifies with  $\R^m$-valued measures on $E$, and 
for $\bs{\nu}\in  \mathcal{M}(E)^m$ its  {\em total variation measure} $|\bs{\nu}|\in\mathcal{M}(E)$  is defined  by
\begin{equation}\label{totVarMeasDef}
  |\bs{\nu}|(B):=\sup_{\mathcal{P}} \sum_{P\in \mathcal{P}} |\bs{\nu}(P)|,
  \qquad B\ \mathrm{a \ Borel \ subset\ of\ } \R^N,
\end{equation}
where the supremum is taken over all finite Borel partitions $\mathcal{P}$ of $B$.   

By a theorem of F. Riesz, if $E$ is locally compact then  the Banach space $\mathcal{M}(E)^m$  equipped  with the {\em total variation norm} $\|\bs{\nu}\|_{TV}:=|\bs{\nu}|(\R^N)$ is dual to $C_0(E,\R^m)$
under the pairing $\langle\bs{\nu}, \bs{\varphi}\rangle =\sum_{i=1}^m\int\varphi_id\nu_i$, where $\nu_i$ and $\varphi_i$ are the components of $\bs{\nu}$ and $\bs{\varphi}$ respectively.
As $|\bs{\nu}|$ is a Radon measure the Radon-Nikodym derivative $\bb{u}_{\bs{\nu}}:=d\bs{\nu}/d{|\bs{\nu}|}$ exists,  $|\bb{u}_{\bs{\nu}}|=1$ a.e.  with respect to $|\bs{\nu}|$ and $\bs{\nu}$ is absolutely continuous with respect to $|\bs{\nu}|$, so that $d\bs{\nu}=\bb{u}_{\bs{\nu}}d|\bs{\nu}|$; the latter is
the \emph{polar decomposition} of $\bs{\nu}$. 
When  $\sigma\in C(E)$, we
let $\sigma\bs{\nu}$ denote the measure such that 
$d(\sigma\bs{\nu})=\sigma\bb{u}_\bs{\nu}d|\bs{\nu}|$.
By definition the support of
$\bs{\nu}$ is the support of $|\bs{\nu}|$, denoted as $\supp\bs{\nu}$ (or  $\supp|\bs{\nu}|$).

For locally compact $E$, we  also endow $\mathcal{M}(E)^m$ 
with the weak-star topology making it a locally convex vector space,
the dual of which is  $C_0(E,\R^m)$; by the separability of the latter and the
Banach-Alaoglu theorem, the  weak-star
topology induced on norm-closed balls is  compact  metrizable \cite[Theorem 3.10 \& 3.15 \& 3.16]{Rudin}.
When $E_1\subset E_2$ are Borel  subsets of $\R^N$
and $\bs{\nu}\in \mathcal{M}(E_2)^m$, we write $\bs{\nu}\mathcal{b} E_1$ for
the restriction of $\bs{\nu}$ to $E_1$.


We
write $C^\alpha(E,\R^m)$ for the space of  \emph{Hölder-continuous}
$\R^m$-valued functions on $E$ with  exponent $\alpha\in(0,1)$.
When $E$ is compact we endow $C^\alpha(E,\R^m)$  with  the norm
\begin{equation}
  \label{defHoln}
  \|\bs{f}\|_{C^\alpha(E,\R^m)}:=\|\bs{f}\|_{C(E,\R^m)}+\sup_{\bs{x},{\bs y}\in E}\frac{|\bs{f}(\bs{x})-\bs{f}({\bs y})|}{|\bs{x}-{\bs y}|^\alpha}.
\end{equation}
A function $\bs{f}:E\to\R^N$ is \emph{locally}
  H\"older continuous if
$\bs{f}|_K$ is H\"older continuous on each compact  $K\subset E$,
and we topologize   $C^\alpha_{loc}(E,\R^m)$ with the seminorms  $\|\cdot\|_{C^\alpha(K,\R^m)}$.
Likewise, $\mathrm{Lip}(E,\R^m)$ indicates the \emph{Lipschitz} functions
$E\to\R^m$; {\it i.e.} those $\bs{f}$ such that $|\bs{f}(\bs{x})-\bs{f}(\bs{y})|\leq c|\bs{x}-\bs{y}|$, and the smallest possible  $c$ is the \emph{Lipschitz constant} of $\bs{f}$ which
defines a seminorm on  $\mathrm{Lip}(E,\R^m)$. The space $\mathrm{BLip}(E,\R^m)$
of bounded Lipschitz functions receives the norm
$\|\cdot\|_{\mathrm{BLip}(E,\R^m)}$ obtained by putting $\alpha=1$ in
\eqref{defHoln}; clearly  $\mathrm{BLip}(E,\R^m)=\mathrm{Lip}(E,\R^m)$
when $E$ is bounded. The space
$\mathrm{Lip}_{loc}(E,\R^m)$
of \emph{locally} Lipschitz functions; {\it i.e.}
whose restriction to each compact $K\subset E$  is Lipschitz receives the seminorms $\|\cdot\|_{\mathrm{BLip}(K,\R^m)}$.
Lipschitz maps can naturally
be defined between any two metric spaces, and we shall have an occasion to deal with mappings  from subsets of $\R^N$ into a Hilbert space.
The modulus of continuity of 
$\bs{f}:V\to\R^m$ is the function  $\omega_{\bs{f}}:\R^+\to\R^+$ defined by $\omega_{\bs{f}}(t):=\sup_{|\bs{x}-\bs{y}|\leq t}|\bs{f}(\bs{x})-\bs{f}(\bs{y})|$, and $\bs{f}$ is \emph{Dini-continuous} if
$\int_0^\delta(\omega_f(t)/t) dm(t)<\infty$ for some $\delta>0$.
Clearly, functions in $C^\alpha(V,\R^m)$ or $\mathrm{Lip}(V,\R^m)$ are Dini-continuous.

For $V\subset\R^N$ an open set, the space $C^1(V,\R^m)$
    is comprised of functions with continuous derivatives,
    topologized with the seminorms $\max\{\|\bs{f}\|_{C(K,\R^m)}, \|D \bs{f}\|_{C(K,\R^{m\times N})}\}$ when $K$ ranges over compact subsets of $V$, with $D \bs{f} $ to mean the Jacobian matrix  of $\bs{f}$.
   A function in $C^1(V,\R^m)$ is called \emph{Dini-smooth} if its derivatives are Dini-continuous. The spaces  $C^{1,\alpha}(V,\R^m)$ and  $C^{1,1}(V,\R^m)$ 
  consist of $C^1$-functions on $V$
  whose first order derivatives are respectively Hölder and Lipschitz-continuous, endowed with the seminorms  $\max\{\|\bs{f}\|_{C(V,\R^m)}, \|D \bs{f}\|_{C^\alpha(V,\R^{m\times N})}\}$ and 
  $\max\{\|\bs{f}\|_{C(V,\R^m)}, \|D f\|_{\mathrm{Lip}(V,\R^{m\times N})}\}$; such functions are called H\"older-smooth and Lipschitz-smooth.
   We put $C_c^\infty(V,\R^m)$ for  the space of
$C^\infty$-smooth $\R^m$-valued 
functions with compact support in $V$.

In all of the previous notation, we  omit $\R^m$ if $m=1$.

  For an arbitrary set $E\subset\R^N$ we put $C^1(E)$ 
  for the space of restrictions to $E$ of functions in $C^1(V)$
  where $V$ ranges over all open sets containing $E$.
  We denote by $\mathcal{G}(E)$ the space of continuous gradients on $E$; i.e., the restrictions $\nabla\varphi{|_E}$ where $\varphi\in C^{1}(E)$ and $\nabla\varphi$ denotes its gradient. The closure in $C(E,\R^N)$ of $\mathcal{G}(E)$
  is the space $\mathcal{Q}(E)$ of \emph{quasi-gradients} on $E$; see \cite[Section 3.1.1]{SmirnovKhavin1998} and Appendix \ref{Rgrad}.

An open subset of $\R^N$ is called Lipschitz (resp.
  Hölder-smooth) if its boundary is compact and
   locally isometric to the graph of a Lipschitz
   (resp. 
   Hölder-smooth) function,
   defined on an open subset of $\R^{N-1}$.
\section{Inverse source problems in divergence form and main result} \label{Sect2.1}
Our primary example of an  inverse  source problem with source in divergence form is the inverse  magnetization problem (e.g., see \cite{Parker}, \cite{IMP}, \cite{WLF}, \cite{IMP2}, \cite{Multipole}).   It consists in recovering  a {\em magnetization distribution} $\bs{\mu}\in\mathcal{M}(S)^3$, supported on a compact set $S\subset \R^3$, from measurements   of the     {\em magnetic field} $\bb{b}(\bs{\mu})$ (or just a component thereof) taken on a set $Q\subset\R^3$ disjoint from $S$.  We model the measurement space as a separable Hilbert space $\mathcal{H}$; e.g.  $\mathcal{H}=L^2(Q,\R^3)$ or $ L^2(Q)$ are natural choices. Here, the
magnetic field is minus the gradient of the  {\em scalar magnetic potential}  $\Phi(\bs{\mu})$  generated by  $\bs{\mu}$
(see \cite{Jac1975}):
\begin{equation}\label{PhiDef}
\Phi(\bs{\mu})(\bs{x}):=\int \mathbf{G}(\bs{x},{\bs y})\cdot d\bs{\mu}({\bs y}) , \qquad  \mathrm{a.e.} \ \bs{x}\in\R^3\ \mathrm{and}\ \mathrm{every}\  \bs{x}\notin \text{supp } \bs{\mu},
\end{equation} where $$\mathbf{G}(\bs{x},{\bs y}):=\frac{1}{4\pi}  \grad_{\bs y}\frac{1}{|\bs{x}-{\bs y}|}=\frac{1}{4\pi}\frac{\bs{x}-{\bs y}}{|\bs{x}-{\bs y}|^3}$$
and $\nabla_{\bs y}$ indicates that the gradient is computed with respect to ${\bs y}$. That \eqref{PhiDef} is well-defined for a.e. $\bs{x}$ follows from Fubini's theorem, see Appendix~\ref{A} Equation \eqref{majpF} for a more general estimate.
In particular, when $\bs{x}\not\in \text{supp } \bs{\mu}$, one has
\begin{equation}
\label{eq|b(mu)}
\begin{split}
  \bb{b}(\bs{\mu})(\bs{x})&:=-\grad \Phi(\bs{\mu})(\bs{x})=-\frac{1}{4\pi}
                             \grad_{\bs{x}} \int \grad_{\bs y}\frac{1}{|\bs{x}-{\bs y}|}\cdot d\bs{\mu}({\bs y})\\&= \int D_{\bs y} \mathbf{G}(\bs{x},{\bs y})   d\bs{\mu}({\bs y})= -\langle \mathbf{G}(\bs{x},\cdot),\div \bs{\mu}\rangle,\\
\end{split}
\end{equation}
where  $$ D_{\bs y}\mathbf{G}(\bs{x},{\bs y})=\frac{1}{4\pi}\frac{1}{|\bs{x}-{\bs y}|^5}\left(|\bs{x}-{\bs y}|^2I_{3\times 3}-3(\bs{x}-{\bs y})(\bs{x}-{\bs y})^T\right)$$ is the $3\times 3$-matrix valued differential of $\mathbf{G}$ with respect to ${\bs y}$, and the bracket in \eqref{eq|b(mu)} is the duality product between
distributions with compact support and smooth functions on the support; see e.g.\cite{Schwartz}.	
We remark that Equation \eqref{PhiDef} defines a locally integrable function
$\Phi(\bs{\mu})$ on $\R^3$ that satisfies (in the distributional sense) the following Poisson equation where $\nabla\cdot$ stands for the  Euclidean  divergence
and $\Delta$ for the Euclidean
Laplacian \cite{IMP}:
\begin{equation}\label{eq|poisson}
	\Delta\Phi=\div\bs{\mu},\qquad \lim_{|x|\to\infty}\Phi(x)=0.
\end{equation}
Here the \emph{forward operator} $\Af:\mathcal{M}(S)^3\to \mathcal{H}$ maps $\bs{\mu}$ to the restriction on $Q$ of $\nabla\Phi$, or ${\bs v}\cdot \nabla\Phi$
if we measure only in a fixed direction ${\bs v}$,
and  $ \mathcal{H}=L^2(Q,\R^3)$ (if $A(\bs{\mu})=\bb{b}(\bs{\mu})_{|Q}$)
or  $ \mathcal{H}=L^2(Q)$ (if $A(\bs{\mu})=\bb{v}\cdot \bb{b}(\bs{\mu})_{|Q}$).
We call a set \emph{slender} if it has Lebesgue measure zero and every component of its complement has infinite measure; for a compact set,  slenderness  is equivalent to having zero measure and  connected complement (see \cite{BVHNS}).
When a compact set $S$ is slender and $Q$ is big enough (for instance if it contains a patch of real analytic surface), one can show that the null space of $\Af$ consists exactly of measures $\bs{\mu}=(\mu_1,\mu_2,\mu_3)^t\in \mathcal{M}(S)^3$ that are \emph{divergence-free}; {\it i.e.}, such that
  $\sum_{j=1}^3 \partial_{x_j}\mu_j=0$ as a distribution on $\R^3$ \cite{BVHNS}.

  More generally, we shall consider elliptic equations  with source term in divergence form on $\R^N$, $N\geq3$, given by
  \begin{equation}\label{genElliptic}
-\mathcal{L}(\Phi):=	\div(\bs{\Sigma}\nabla\Phi)=\div \bs{\mu}, \qquad \lim_{|x|\to\infty}\Phi(x)=0, 
\end{equation}
where
$\bs{\Sigma}:\R^N\to\R^{N\times N}$ is  a symmetric, uniformly elliptic  matrix-valued map satisfying certain assumptions to be specified later; see Conditions \ref{conE} in Section \ref{CondEUsec}.
The solution
$\Phi:\R^N\to \R$ is  called  the {\em potential},
and the minus sign in the definition of $\mathcal{L}$ is a convention to make
it a \emph{positive} operator. 
Such generality  naturally occurs in a variety of electro-magnetic problems  concerning inhomogeneous and non-isotropic media (e.g.,  in EEG or ECG)\footnote{  From the viewpoint of Geometric Analysis, the inverse source problem
  associated with 
\eqref{genElliptic} amounts to recover a compactly supported
$\R^N$-valued measure  $\bs{\mu}$ from the gradient summand $\nabla\Phi$ of its Helmholtz-Hodge
decomposition on $\R^N$, when the latter is
endowed with the Riemannian metric $(\mathrm{det}\bs{\Sigma})\bs{\Sigma}^{-1}$.
}.
We shall assume that data are of the form $L(\Phi)$ for some  linear {\em measurement operator} $L$, acting on an appropriate space of functions $\mathcal{Z}$ containing the solutions of \eqref{genElliptic}  and  taking values in a separable Hilbert space $\mathcal{H}$.   For example  $L$ may be the gradient or a directional derivative of $\Phi$ evaluated on $Q$,
and then $\mathcal{H}$ could be $L^2(Q)$ (or $L^2(Q,\R^m)$) as in the prior magnetization case. 

The core properties  
that the forward operator
$\Af(\bs{\mu}):=L(\Phi(\bs{\mu}))$
should  retain from the Poisson case discussed after \eqref{eq|poisson},
in order to carry out  our analysis,
are the following:
\begin{con}{U}\label{conU}
	\color{white}{a}\color{black}
\begin{enumerate}
\item $S\subset\R^N$ is compact and the forward operator $\Af:\mathcal{M}(S)^N\to\mathcal{H}$   is sequentially weak-star to strong continuous for some separable Hilbert space $\mathcal{H}$.
\item The null space of $\Af$ is comprised of those measures $\bs{\mu}\in \mathcal{M}(S)^N$ such that $\mathrm{div}\bs{\mu}=0$ as  a distribution on $\R^N$.
    \item The adjoint operator $\Af^*:\mathcal{H}\to C(S)^N$
      is actually valued in the space
      $\mathrm{Lip}(S,\R^N)$, and the restriction of  $\mathrm{Im}A^*$
      to any rectifiably connected component
      $E$ of $S$ is included in $\mathcal{G}(E)$, the space of continuous gradients on $E$.
\end{enumerate}
\end{con}

\begin{rmk}
  \label{onrange}
Since (vector-valued) functions in the image of $A^*$   integrate to zero against (vector-valued) measures in the null space of $A$, assumption (2) and the Hahn-Banach theorem imply that $\mathrm{Im}A^*\subset\mathcal{Q}(S)$,
the space of quasi-gradients on $S$ defined in Section \ref{preln}; {\it cf.}
Appendix \ref{Rgrad}. Note that assumption (3) requires more,
namely $\mathrm{Im}A^*$ should be included in $\mathcal{G}(S)\cap\mathrm{Lip}(S,\R^N)$, where $\mathcal{G}(S)$ denotes 
the continuous gradients on $S$ (also defined in Section \ref{preln}). However, when all rectifiably connected components of $S$  are quasi-convex, Lemma \ref{lemma|constructg} from Appendix \ref{Rgrad} implies that $\mathrm{Im}A^*\subset \mathcal{G}(S)$. Hence, under this mild  hypothesis on $S$,  the content of
(3) reduces to the assumption  that
$\mathrm{Im}A^*\subset \mathrm{Lip}(S,\R^N)$.
  \end{rmk}
We are now in position to state our main result, Theorem~\ref{thm|uniqueness}
below, on the  uniqueness of minimizers for the TV-regularized functional
$\mathcal{F}$ defined in \eqref{eq|main}. As stressed in the introduction,
this result teams up with \cite[Theorem 4.4]{BVHNS} to
allow for the construction of weak-star convergent
sparse recovery schemes in this infinite-dimensional context.

\begin{theorem} 
	\label{thm|uniqueness}
	Let $S\subset\R^\N$ be compact, $\HH$ a Hilbert space, and $\Af:\mathcal{M}(S)^\N\to\HH$ a linear 
        operator such that Conditions~\ref{conU} are satisfied.   
  	
	Then, for every $f\in\HH$, $\sigma\in \mathrm{Lip}(S)$ with $\sigma$ positive and $\lambda>0$, the functional $\mathcal{F}$ given in \eqref{eq|main} has a unique minimizer with respect to $\bs{\mu}\in\mathcal{M}(S)^\N$.
\end{theorem}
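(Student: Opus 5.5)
Existence follows from the direct method. $\mathcal{F}$ is coercive because $\sigma\ge c>0$ on the compact set $S$, so $\|\sigma\bs{\mu}\|_{TV}\ge c\|\bs{\mu}\|_{TV}$. Any minimizing sequence is therefore bounded, and by Banach--Alaoglu it has a weak-star convergent subsequence. The first term is sequentially weak-star continuous by Conditions~\ref{conU}(1). The second term is weak-star lower semicontinuous, since $\|\sigma\bs{\mu}\|_{TV}=\sup\{\langle\bs{\mu},\sigma\bs{\varphi}\rangle:\bs{\varphi}\in C(S,\R^N),|\bs{\varphi}|\le1\}$.

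For uniqueness, let $\bs{\mu}_1,\bs{\mu}_2$ be two minimizers. Since $\bs{\mu}\mapsto\|f-A\bs{\mu}\|^2$ is strictly convex in $A\bs{\mu}$, convexity of $\mathcal{F}$ gives $A\bs{\mu}_1=A\bs{\mu}_2$. Hence $\bs{\nu}:=\bs{\mu}_1-\bs{\mu}_2$ is divergence free by (2), and $\|\sigma\bs{\mu}_1\|_{TV}=\|\sigma\bs{\mu}_2\|_{TV}$.

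Next I would write the first-order optimality conditions. Put $\bs{g}:=\frac{2}{\lambda}A^*(f-A\bs{\mu}_1)$, which is the same for both minimizers. The subdifferential characterization of minimizers of $\|\cdot\|^2+\lambda\|\sigma\cdot\|_{TV}$ gives two facts:
\begin{itemize}
\item[(a)] $|\bs{g}|\le\sigma$ on $S$;
\item[(b)] $\bs{g}=\sigma\,\bb{u}_{\bs{\mu}_i}$ holds $|\bs{\mu}_i|$-a.e. for $i=1,2$.
\end{itemize}
These follow by perturbing $\bs{\mu}_i$ in direction $\bs{\eta}$ and using $\langle\bs{\mu}_i,\bs{g}\rangle=\|\sigma\bs{\mu}_i\|_{TV}$. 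By (3), $\bs{g}$ is Lipschitz and, on each rectifiably connected component $E$ of $S$, equals $\nabla\varphi_E$ for some $\varphi_E\in C^1(E)$.

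Let $\mathcal{C}:=\{|\bs{g}|=\sigma\}$, a closed set. Both $|\bs{\mu}_i|$ are carried by $\mathcal{C}$, with unit fields $\bb{u}_{\bs{\mu}_i}=\bs{g}/\sigma$ there. Consequently $d\bs{\nu}=(\bs{g}/\sigma)\,d\rho$, where $\rho:=|\bs{\mu}_1|-|\bs{\mu}_2|$ is a signed measure on $\mathcal{C}$.

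The key step is to show $\rho=0$, which is where Theorem~\ref{prop|smi_tra} (integral curves of $d\bs{\nu}/d|\bs{\nu}|$ for divergence-free $\bs{\nu}$) enters. On $\supp\bs{\nu}$ we have $\bb{u}_{\bs{\nu}}=\pm\bs{g}/\sigma$ $|\bs{\nu}|$-a.e. By Theorem~\ref{prop|smi_tra}, $|\bs{\nu}|$-almost every point lies on a Lipschitz curve $\gamma$ with $\gamma'=\bb{u}_{\bs{\nu}}(\gamma)$, and a Smirnov-type decomposition writes $\bs{\nu}$ as a superposition of such curves. Along $\gamma$, which lies in one rectifiably connected component $E$, I would compute
\[
\frac{d}{dt}\varphi_E(\gamma(t))=\langle\bs{g},\gamma'\rangle=\pm|\bs{g}|^2/\sigma=\pm\sigma ,
\]
so $\varphi_E$ is strictly monotone along the curve, at rate at least $\min\sigma>0$, on the parts where $\bs{\nu}$ has constant sign. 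Since $\varphi_E$ is bounded, a curve cannot run on indefinitely, and a closed loop would force $\oint\langle\bs{g},\gamma'\rangle=0$, a contradiction. Yet Smirnov's decomposition represents the divergence-free $\bs{\nu}$ by closed loops or bi-infinite elementary curves of unbounded length, and is nontrivial when $\bs{\nu}\ne0$.

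The main obstacle is making this argument rigorous. The sign of $\rho$ can switch along a curve, so the curves are really integral curves of $\pm\bs{g}/\sigma$ with switching. The cleaner route I would try first is to pair $\bs{\nu}$ with suitable test functions, for instance $\langle\bs{\nu},\psi(\varphi_E)\bs{g}\rangle$ via the curve decomposition, to show that $\rho^+$ and $\rho^-$ separately give zero. The Lipschitz regularity of $\bs{g}$ and $\sigma$ is needed so that the integral curves stay in $\mathcal{C}$ and $\varphi_E\circ\gamma$ is differentiable with the stated derivative. Once $\rho=0$ we get $\bs{\nu}=0$, i.e.\ $\bs{\mu}_1=\bs{\mu}_2$.
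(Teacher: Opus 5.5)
Up to the key step your argument is sound and matches the paper's. Your identity $d\bs{\nu}=(\bs{g}/\sigma)\,d(|\bs{\mu}_1|-|\bs{\mu}_2|)$ is a tidy form of the paper's Radon--Nikodym computation giving $\bb{u}_{\bs{\nu}}=\xi\,\bs{g}/\sigma$ with $\xi\in\{\pm1\}$. But the step you flag as the main obstacle is the heart of the proof, and you leave it open.

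\textbf{The missing idea} is uniqueness for the Cauchy problem. Apply Theorem~\ref{prop|smi_tra} once, with $X$ the full-measure set where $\bb{u}_{\bs{\nu}}=\xi\,\bs{g}/\sigma$. This gives a single Lipschitz $\bs{\gamma}:\R\to S$ with $\bs{\gamma}'(t)=s(t)H(\bs{\gamma}(t))$ a.e., where $H:=\bs{g}/\sigma$ and $s(t):=\xi(\bs{\gamma}(t))$. Since $H$ is Lipschitz (extend it from $S$ to $\R^N$), Lemma~\ref{lemma|CP} gives $\bs{\gamma}(t)=w(\beta(t))$, with $w$ the unique trajectory of $\dot w=H(w)$ through $\bs{\gamma}(0)$ and $\beta(t)=\int_0^t s$. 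Thus $s(t)=\xi(w(\beta(t)))$ depends on $t$ only through $\beta(t)$.

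Suppose $\beta$ were not monotone. Then a set of values $y$ of positive measure would be taken by $\beta$ both at a time where $\beta'=s=+1$ and at a time where $\beta'=s=-1$. Exceptional null sets of times are harmless, since $\beta$ is Lipschitz. This forces $\xi(w(y))$ to equal both $+1$ and $-1$. Hence $s$ is a.e.\ constant on all of $\R$.

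Only then does your computation $\frac{d}{dt}\varphi_E(\bs{\gamma}(t))=\pm\sigma(\bs{\gamma}(t))$ have a fixed sign. It gives $|\varphi_E(\bs{\gamma}(t_2))-\varphi_E(\bs{\gamma}(t_1))|\ge(\min_S\sigma)\,|t_2-t_1|$, which contradicts the boundedness of $\varphi_E$ on the compact set $\overline{\bs{\gamma}(\R)}\subset S$. One curve suffices. There is no need to separate $\rho^+$ from $\rho^-$ or to integrate over the whole decomposition.

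\textbf{Your fallback does not close the gap.} Test $\bs{\nu}$ against $\psi(\varphi_E)\bs{g}=\nabla(\Psi\circ\varphi_E)$ with $\Psi'=\psi$, and use $\mathrm{div}\,\bs{\nu}=0$. This only yields $\int\psi(\varphi_E)\,\sigma\,d\rho=0$ for all $\psi$, i.e.\ the image of $\sigma\rho$ under $\varphi_E$ vanishes. That is compatible with $\rho^+$ and $\rho^-$ being nonzero and cancelling on common level sets. An example is a loop formed by two \emph{distinct} integral arcs of $\bs{g}/\sigma$ from a point $q$ to a point $p$, run forward along one and backward along the other. Such a configuration is excluded precisely because a Lipschitz field has unique trajectories: the two arcs must coincide, and then $\bs{\nu}$ cancels there.

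\textbf{You have also misplaced the role of the Lipschitz hypothesis.} The chain rule for $\varphi_E\circ\bs{\gamma}$ needs only $\varphi_E\in C^1$ and $\bs{\gamma}$ Lipschitz. The fact that the curve stays where $|\bs{g}|=\sigma$ comes from the choice of $X$ in Theorem~\ref{prop|smi_tra}, not from any regularity of $\bs{g}$. Lipschitz regularity of $\bs{g}$ and $\sigma$ is needed exactly for the trajectory uniqueness above.
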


Conditions \ref{conU} are abstract in that
they do not refer to a specific forward operator anymore.
However, applications to inverse source problems that we have in mind deal with 
elliptic PDEs,
so we must still specify assumptions on
Equation \eqref{genElliptic} and on the geometry of $S$,
$Q$ guaranteeing that the forward operator
  $\Af(\bs{\mu}):=L(\Phi(\bs{\mu}))$ will meet Conditions \ref{conU}
  when $\Phi(\bs{\mu})$ is the solution of \eqref{genElliptic}, and $L$  the
  measurement operator restricting $\Phi$ (or $\nabla\Phi$) to $Q$.
  We shall address these issues in Section~\ref{CondEUsec}, once we have proven
  Theorem \ref{thm|uniqueness}.


\section{Divergence-free measures and their phase portraits}
\label{secDF}

\subsection{Rectifiable curves and divergence-free measures}

Recall that a {\it curve in $\R^N$} is a continuous $\R^N$ valued function $\bs{f}$ on an interval $[a,b]\subset \R$, where we further assume that $\bs{f}$ is non-constant on any open interval of $[a,b]$.   If $\psi$ is a continuous, strictly increasing function from $[c,d]$ onto  $[a,b]$ , then we refer to $\bs{f}\circ \psi$ as a {\it reparametrization} of the curve $\bs{f}$ and we regard $\bs{f}$ and $\bs{f}\circ \psi$ to be equivalent.     A curve $\bs{f}$ is {\it rectifiable} if it has  bounded variation; i.e., if there is an upper bound on sums of the form $\sum_{i=0}^m |\bs{f}(x_{i+1})-\bs{f}(x_i)|$ for any partition $a=x_0<x_1<\cdots <x_m=b$, in which case the least upper bound $\ell(\bs{f})$ of these sums is called the  {\it length} of curve.  It is elementary that a curve is rectifiable if and only if it admits a Lipschitz parametrization (take $\psi=:\phi^{-1}$ where $\phi:[a,b]\to[0,\ell(\bs{f})]$ is given
by $\phi(t)=\ell(\bs{f}_{|[a,t]})$),  so we may identify  elements of $\mathrm{Lip}([a,b],\R^N)$ with  {\it rectifiable curves in $\R^N$}.
For  $\bs{\gamma}\in \mathrm{Lip}([a,b],\R^N)$
the derivative  $\bs{\gamma}'$
exists a.e. on $(a,b)$, moreover we let $\#(\bs{\gamma},x)$ be the 
cardinality (finite or infinite) of the preimage $\bs{\gamma}^{-1}(x)$ for $x\in\Gamma:=\bs{\gamma}([a,b])$.  
The length $\ell(\bs{\gamma})$ may then alternatively be expressed as 
\begin{equation}
	\label{eq|areas|old}
	\ell(\bs{\gamma}):=\int_a^b |\bs{\gamma}^\prime(t)|\,dt=\int_\Gamma \#(\bs{\gamma},x) \,dH^1(x),
\end{equation} 
with  $H^1$ to mean
1-dimensional Hausdorff measure normalized so as to coincide
with standard arc length, and where the second equality follows from the  area formula 
\cite[Thm 3.2.3, part 1]{FedererBook}. 
Letting as before
$\psi=:\phi^{-1}$ where $\phi:[a,b]\to[0,\ell(\bs{f})]$ is given
by $\phi(t)=\ell(\bs{f}_{|[a,t]})=\int_a^t|\bs{\gamma}^\prime(\tau)|d\tau$,
one can check from the chain rule 
and a Sard-like theorem for Lipschitz functions (see \cite[Theorem~7.6]{Mattila})
that $\widetilde{\bs{\gamma}}:=\bs{\gamma}\circ\psi$  is  in fact a unit speed
reparametrization; that is,
$\widetilde{\bs{\gamma}}$ is Lipschitz and $|\widetilde{\bs{\gamma}}^\prime(t)|=1$ for a. e. $t\in[0,\ell(\bs{\gamma})]$.

The set of  maps valued in $S\subset\R^N$ with
Lipschitz constant less than  or equal to $\lambda$ will be  denoted by
$\mathrm{Lip}_\lambda([a,b],S)$.
When $S$ is compact and $\mathrm{Lip}_\lambda([a,b],S)$ is equipped with the uniform metric $d(\bs{f},\bs{g})=\|\bs{f}-\bs{g}\|_{C([a,b],S)}$ (of course $\mathrm{Lip}_\lambda([a,b],S)$ is not a vector space),  closed balls are compact
 by the Arzelà-Ascoli theorem \cite[Theorem 4.7]{dunford_schwartz_1958}.


We define a map $\bs{\pi}:\textup{Lip}([a,b],\R^\N)\longrightarrow\mathcal{M}(\R^\N)^\N$ by 
\begin{equation}\label{eq|Rgamma|old}
	\langle \bs{\pi}(\bs{\gamma}),\bs{G}\rangle 
	:=\int_a^b \bs{G}(\bs{\gamma}(t))\cdot\bs{\gamma}'(t) dt=\int_{\Gamma}\left( \sum_{t\in\bs{\gamma}^{-1}(x)}\bs{G}(x)\cdot\bs{\gamma}^\prime(t) \right)dH^1(x),\quad
	\bs{G}\in C_c(\R^N,\R^N),
\end{equation}
where  the second equality  follows from the co-area formula \cite[Thm 3.2.3, part 2]{FedererBook}. Clearly, $\bs{\pi}(\bs{\gamma})$  is invariant under a reparametrization of $\bs{\gamma}$, so we may assume if helpful that $\bs{\gamma}$  in \eqref{eq|Rgamma|old} has unit speed.
We note the obvious inequality
$\|\bs{\pi}({\bs\gamma})\|_{TV}\leq\ell(\bs{\gamma})$. Identifying vector fields on $\R^n$ with differential 1-forms, $\bs{\pi}(\bs{\gamma})$ may
as well be viewed as a 1-dimensional integral current, but we shall stick to the vector-valued measure interpretation taken above.

 We  shall need a continuity property of the map $\bs{\pi}$ just defined.

\begin{lemma}
	\label{lemma|map_pi}
For $\lambda>0$,	the restriction  $\bs{\pi}:\textnormal{Lip}_\lambda([a,b],\R^N)\to\mathcal{M}(\R^\N)^\N$ is continuous when $\mathcal{M}(\R^\N)^\N$ receives the weak-star topology and $\mathrm{Lip}_\lambda([a,b],\R^N)$ the topology of uniform convergence.
\end{lemma}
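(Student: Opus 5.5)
To prove the lemma we check weak-star convergence against a dense family of test fields and then use a uniform bound on total variations. Let $\bs{\gamma}_n\to\bs{\gamma}$ uniformly with all $\bs{\gamma}_n,\bs{\gamma}\in\mathrm{Lip}_\lambda([a,b],\R^N)$; the limit is automatically $\lambda$-Lipschitz. Since $\|\bs{\pi}(\bs{\gamma}_n)\|_{TV}\le\ell(\bs{\gamma}_n)\le\lambda(b-a)$, the measures $\bs{\pi}(\bs{\gamma}_n)$ lie in a fixed norm-bounded ball of $\mathcal{M}(\R^N)^N$. By a standard $\varepsilon/3$ argument, it then suffices to prove $\langle\bs{\pi}(\bs{\gamma}_n),\bs{G}\rangle\to\langle\bs{\pi}(\bs{\gamma}),\bs{G}\rangle$ for $\bs{G}$ in a dense subset of $C_c(\R^N,\R^N)$, hence of $C_0(\R^N,\R^N)$. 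We take $\bs{G}\in C_c^\infty(\R^N,\R^N)$.

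Fix such a $\bs{G}$ and split the difference as
\[
\langle\bs{\pi}(\bs{\gamma}_n)-\bs{\pi}(\bs{\gamma}),\bs{G}\rangle=\int_a^b\bigl(\bs{G}(\bs{\gamma}_n)-\bs{G}(\bs{\gamma})\bigr)\cdot\bs{\gamma}_n'\,dt+\int_a^b\bs{G}(\bs{\gamma})\cdot(\bs{\gamma}_n'-\bs{\gamma}')\,dt .
\]
The first term is bounded by $\lambda(b-a)\sup_t|\bs{G}(\bs{\gamma}_n(t))-\bs{G}(\bs{\gamma}(t))|$. This tends to $0$ because $\bs{G}$ is uniformly continuous and $\bs{\gamma}_n\to\bs{\gamma}$ uniformly.

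The second term is the main point, since $\bs{\gamma}_n'$ need not converge pointwise. Uniform convergence together with the bound $\|\bs{\gamma}_n'\|_\infty\le\lambda$ implies $\bs{\gamma}_n'\to\bs{\gamma}'$ weak-star in $L^\infty([a,b],\R^N)$. To see this, note that for any interval $[c,d]\subset[a,b]$,
\[
\int_c^d\bs{\gamma}_n'\,dt=\bs{\gamma}_n(d)-\bs{\gamma}_n(c)\to\bs{\gamma}(d)-\bs{\gamma}(c)=\int_c^d\bs{\gamma}'\,dt .
\]
Hence the convergence holds against step functions, and by the uniform bound and density of step functions in $L^1$, it holds against every $L^1$ function. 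The function $\bs{G}\circ\bs{\gamma}$ is continuous on $[a,b]$, hence in $L^1$, so the second term tends to $0$. One could alternatively integrate by parts against the smooth $\bs{G}$, but this weak-star argument avoids needing any smoothness of $\bs{G}$ beyond continuity.

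The two convergences give convergence on the dense class. The uniform TV bound then extends it to all of $C_0(\R^N,\R^N)$: given $\bs{H}\in C_0$ and $\varepsilon>0$, choose $\bs{G}$ smooth with $\|\bs{H}-\bs{G}\|_\infty<\varepsilon$; the errors are then at most $2\lambda(b-a)\varepsilon$. This yields sequential continuity. Full continuity follows because $\mathrm{Lip}_\lambda([a,b],\R^N)$ with the uniform topology is a metric space, where sequential continuity suffices; the target topology plays no role in this reduction. We expect the only genuinely delicate step to be the weak-star convergence of the derivatives, and it is handled by the interval computation above.
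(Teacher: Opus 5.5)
Your proof is correct and follows the paper's argument. Both split the difference into a term bounded by $\lambda\int|\bs{G}(\bs{\gamma}_n)-\bs{G}(\bs{\gamma})|\,dt$ and a term $\int\bs{G}(\bs{\gamma})\cdot(\bs{\gamma}_n'-\bs{\gamma}')\,dt$, and both handle the latter by deducing weak convergence of the derivatives from convergence of increments on intervals. The paper phrases this as weak-star convergence of the measures $\bs{\gamma}_n'\,dt$ on $[a,b]$ tested against $\bs{f}\circ\bs{\gamma}^*$, whereas you use weak-star convergence in $L^\infty$ via density of step functions in $L^1$; also, your preliminary reduction to smooth $\bs{G}$ is unnecessary, since your argument already works for every $\bs{G}\in C_c(\R^N,\R^N)$.
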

\begin{proof}
	  Let $\{\bs{\gamma}_n\}_{n\in\mathbb{N}}\subset\textnormal{Lip}_\lambda([a,b],\R^N)$ converge uniformly to $\bs{\gamma}^*\in\textnormal{Lip}_\lambda([a,b],\R^N)$.
	Define a map $\bs{p}:\mathrm{Lip}_\lambda([a,b],\R^N)\to\mathcal{M}([a,b])^\N$
         by $d\bs{p}(\bs{\gamma})(t):=\bs{\gamma}'(t)dt$, with $dt$ the differential of 1-dimensional Lebesgue measure.
         As Lipschitz functions are absolutely continuous, for each open interval $(s,t)\subset[a,b]$ we have   $\bs{p}(\bs{\gamma})((s,t))=\bs{\gamma}(t)-\bs{\gamma}(s)$ and similarly for closed intervals. Thus, $\bs{p}(\bs{\gamma}_n)(O)\to \bs{p}(\bs{\gamma}^*)(O)$ for each open subset $O$ of $[a,b]$.  Hence, $\bs{p}(\bs{\gamma}_n)\to \bs{p}(\bs{\gamma}^*)$ weak-star as $n\to\infty$, because the $\bs{p}(\bs{\gamma}_n)$ have uniformly bounded total variation so that $\{\bs{p}(\bs{\gamma}_n)\}_{n\in\mathbb{N}}$ is relatively weak-star compact in
        $\mathcal{M}(\R^\N)^\N$ by the Banach-Alaoglu theorem, while 
        $\bs{p}(\bs{\gamma}^*)$ is the only possible limit point.

	Next, pick $\bs{f}\in C_c(\R^N,\R^N)$ and $\varepsilon>0$.  Let $M,\delta>0$ and $m\in\mathbb{N}$ be such that, for $n>m$, one has:
	\begin{itemize}
		\item[(i)] $|\ \langle\ \bs{p}(\bs{\gamma}_n)-\bs{p}(\bs{\gamma}^*),\ \bs{f}\circ\bs{\gamma}^*\ \rangle\ | < \frac{\varepsilon}{2}$,
		\item[(ii)] $\|\bs{\gamma}^*-\bs{\gamma}_n\|_\infty<\delta$,
		\item[(iii)] for any $\bs{x},\bs{y}\in \R^N$ with $|\bs{x}-\bs{y}|<\delta$,
                  it holds that $|\bs{f}(\bs{x})-\bs{f}(\bs{y})|<\frac{\varepsilon}{2\lambda(b-a)}$.
	\end{itemize}
	Then, as $|\bs{\gamma}_n^\prime|\leq\lambda$ because $\bs{\gamma}\in\textnormal{Lip}_\lambda([a,b])^\N$, it follows for $n>m$ that
	\begin{align*}
		|\ \langle\ {\bs{\pi}}({\bs{\gamma}_n})-{\bs{\pi}}({\bs{\gamma}^*}),&\ \bs{f}\ \rangle\ |
		=\left|\ \int_{a}^{b}\bs{f}(\bs{\gamma}_n(t))\cdot\bs{\gamma}_n'(t)\ dt - \int_{a}^{b}\bs{f}(\bs{\gamma}^*(t))\cdot(\bs{\gamma}^*)'(t)\ dt\ \right| \\
		&\leq \left|\ \int_{a}^{b}\bs{f}(\bs{\gamma}_n(t))\cdot\bs{\gamma}_n'(t)\ dt - \int_{a}^{b}\bs{f}(\bs{\gamma}^*(t))\cdot\bs{\gamma}_n'(t)\ dt\ \right| \\
		&\ \ \ \ \ \ \ \ \quad\, + \left|\ \int_{a}^{b}\bs{f}(\bs{\gamma}^*(t))\cdot\bs{\gamma}_n'(t)\ dt - \int_{a}^{b}\bs{f}(\bs{\gamma}^*(t))\cdot(\bs{\gamma}^*)'(t)\ dt\ \right| \\
		&\leq \lambda\int_{a}^{b}|\bs{f}(\bs{\gamma}_n(t))-\bs{f}(\bs{\gamma}^*(t))|\ dt 
		+ |\ \langle\ \bs{p}(\bs{\gamma}_n)-\bs{p}(\bs{\gamma}^*),\ \bs{f}\circ\bs{\gamma}^*\ \rangle\ |
		<\varepsilon.
	\end{align*}
	By definition of the weak star neighborhoods, this yields the result.
      \end{proof}

For each $\ell> 0$,  we consider  the collection $\mathfrak{C}_\ell$
of those $\bs{\pi}(\bs{\gamma})$ associated with
a rectifiable curve $\bs\gamma$ of length $\ell$, and we let
$\mathfrak{U}_\ell:=\cup_{0< l \leq\ell} \mathfrak{C}_l$. These sets contain the zero measure
(think of a segment of length $\ell/2$ percursed once in each direction so that
the integral cancels out  in \eqref{eq|Rgamma|old}), and they are contained in the closed ball centered at $\bs{0}$ of radius $\ell$ in $\mathcal{M}(\R^N)^N$
which is a compact metric space when endowed  with the weak-star topology.
\textit{We next contend} that $\mathfrak{U}_\ell\cap \mathcal{M}(\overline{B(\bs{0},R)})^N$ is   closed  for all $\ell$ and $R$ positive.     Indeed,
take a  sequence of nonzero $\bs{\pi}(\bs{\gamma}_n)$ in this set converging weak-star to some $\bs{\mu}$. Clearly  $\supp\bs{\mu}\subset \overline{B(\bs{0},R)}$, but the point sets $\Gamma_n:=\bs{\gamma}_n([0,\ell_n])$ need not be contained  in $\overline{B(\bs{0},R)}$ even though $\bs{\pi}(\bs{\gamma}_n)$ is supported there, because self-cancellation may occur in \eqref{eq|Rgamma|old}; however,
the $\Gamma_n$ must intersect $\overline{B(\bs{0},R)}$ since
$\bs{\pi}(\bs{\gamma}_n)\neq0$ by assumption, and their diameter cannot exceed $\ell$ whence $\Gamma_n\subset \overline{B(\bs{0},R+\ell)}$. Thus, 
we may extract a subsequence, denoted again by $\bs{\pi}(\bs{\gamma}_n)$, where  each $\bs{\gamma}_n$ is a unit speed curve  valued in  $\overline{B(\bs{0},R+\ell)}$,
whose lengths $l_n$ converge to some $l_0\leq\ell$ as $n\to\infty$.  If $l_0=0$, then $\bs{\pi}(\bs{\gamma}_n)$ converges to the zero measure (even strongly). 
 If $l_0>0$,  then reparametrizing each $\bs{\gamma}_n$ to the interval $[0,l_0]$ by a simple scaling leads to a sequence of curves in  $\mathrm{Lip}_\lambda([0,l_0],\overline{B(\bs{0},R+\ell)})$ for some $\lambda$ independent of
$n$.  We may then extract a uniformly convergent subsequence of these curves, by the
Arzelà-Ascoli theorem, with limit $\bs{\gamma}$ such that $\bs{\mu}=\bs{\pi}(\bs{\gamma})$.  Since  $\ell(\bs{\gamma})$ cannot exceed $l_0$ we have that $\bs{\mu}\in \mathfrak{U}_\ell\cap \mathcal{M}(\overline{B(\bs{0},R)})^N$. \textit{This proves our contention.} 
By the latter,   $\mathfrak{U}_\ell$ is a countable union of closed sets $  \mathfrak{U}_\ell\cap \mathcal{M}(\overline{B(\bs{0},n)})^N$, $n\in \mathbb{N}$,
while  $\mathfrak{C}_\ell=\mathfrak{U}_\ell\setminus(\cup_{n>1/\ell}\mathfrak{U}_{\ell-1/n})$ is a Borel subset of $\mathcal{M}(\R^N)^N$.

For $\varphi\in C^1_c(\R^N)$, definition \eqref{eq|Rgamma|old}
  yields
	$\langle \bs{\pi}(\bs{\gamma}),{\nabla\varphi}\rangle =\varphi(b)-\varphi(a)$ for $\bs{\gamma}\in \mathrm{Lip}([a,b],\R^N)$.
	Thus, the distributional divergence of $\bs{\pi}(\bs{\gamma})$
	is $\div\bs{\pi}(\bs{\gamma})=\delta_{\bs{\gamma}(b)}-\delta_{\bs{\gamma}(a)}$ (the difference of a Dirac delta at $\bs{\gamma}(b)$ with one at $\bs{\gamma}(a)$), and  $\bs{\pi}(\bs{\gamma})$ is divergence free if and only if $\bs{\gamma}$ is closed; {\it i.e.} $\bs{\gamma}(a)=\bs{\gamma}(b)$. Moreover,
	the beginning and  ending points of $\bs{\gamma}$, namely $\bs{\gamma}(a)$ and  $\bs{\gamma}(b)$,
	are uniquely determined by  $\bs{\pi}(\bs{\gamma})$
	if and only if the latter is not   divergence free; in this case, we refer to $\bs{\gamma}(a)$ and $\bs{\gamma}(b)$
	as the beginning point and ending point of $\bs{\pi}(\bs{\gamma})$,
        respectively. Note that when $\bs{\pi}(\bs{\gamma})$ is viewed as a 1-dimensional integral current, $\delta_{\bs{\gamma}(b)}-\delta_{\bs{\gamma}(a)}$ is simply its boundary current; \cite[Section 4.1.7]{FedererBook}.

For each $\ell>0$,  we consider  the collection $\mathcal{C}_\ell$
of those $\bs{\pi}(\bs{\gamma})$ associated with
a parametrized rectifiable curve $\bs\gamma$ of length $\ell$ and such that
$\|\bs{\pi}({\bs\gamma})\|_{TV}=\ell$.
Note that  $\mathcal{C}_\ell$  is a Borel subset of $\mathcal{M}(\R^N)^N$ endowed with the weak topology,
because the set $\mathcal{T}_\lambda$ of measures with total variation at most $\lambda$ is closed by the Banach-Alaoglu theorem, while $\mathcal{C}_\ell=\mathfrak{C}_\ell\setminus
(\cup_{n>1/\ell}\mathcal{T}_{\ell-1/n})$.  
Since $\bs{\pi}(\bs{\gamma})$ is absolutely continuous with respect to the
restriction $H^1\mathcal{b}\Gamma$ which is finite, we may consider the Radon-Nikodym derivative $\bs{\tau}$ of $\bs{\pi}(\bs{\gamma})$ with respect to $H^1$ and  remark in view of \eqref{eq|Rgamma|old}
that $\bs{\tau} (x) = \sum_{t\in\bs{\gamma}^{-1}(x)}\bs{\gamma}' (t)$ for $H^1$-a.e. $x\in\Gamma$. Moreover, one can see that
  $\bs{\gamma}:[a,b]\to\R^N$ is such that $\bs{\pi}(\bs{\gamma})\in\mathcal{C}_\ell$ if and only if, for any $c\in[a,b]$, the restrictions $\bs{\gamma}_{|[a,c]}$ and $\bs{\gamma}_{|[c,b]}$
  are such that  $\bs{\pi}(\bs{\gamma}_{|[a,c]})$ and $\bs{\pi}(\bs{\gamma}_{|[c,b]})$ lie in $\mathcal{C}_{\ell(\bs{\gamma}_{|[a,c]})}$ and $\mathcal{C}_{\ell(\bs{\gamma}_{|[c,b]})}$ respectively.
Thus, by \eqref{eq|areas|old}, $\bs{\pi}(\bs{\gamma})\in\mathcal{C}_{\ell(\bs{\gamma})}$ if and only if $|\bs{\tau} (x)| = \#(\bs{\gamma},x)$ for $H^1$-a.e. $x\in \Gamma$, and when $\bs{\gamma}$ is a unit speed parametrization this amounts to
say that $\bs{\gamma}' (t)$ depends only on $\bs{\gamma}(t)$ for a.e. $t$, in which case
the Radon-Nikodym derivative of $\bs{\pi}(\bs{\gamma})$ with respect to 
 $|\bs{\pi}(\bs{\gamma})|$ is given by
 \begin{equation}
   \label{RNPi}
   \bb{u}_{\bs{\pi}(\bs{\gamma})}(\bs{x})=\bs{\gamma}' (t), \qquad H^1-\mathrm{a.e.\ } {\bs x\ }\in\Gamma\mathrm{\ and\ every\ }t\in\bs{\gamma}^{-1}(\bs{x}),\quad\bs{\gamma}\in\mathcal{C}_{\ell(\bs{\gamma})}.
 \end{equation}
 We put
   $\mathcal{U}_\ell\subset \mathrm{Lip}_1([0,\ell],\R^N)$ for the set of
   unit speed  parametrizations $\bs{\gamma}$ satisfying  \eqref{RNPi}, and we just showed that $\bs{\pi}(\bs{\gamma})\in\mathcal{C}_\ell$ 
   with unit speed $\bs{\gamma}$ of length $\ell$ 
   if and only if  $\bs{\gamma}\in\mathcal{U}_\ell$; in this case
   the support of  $\bs{\gamma}$
   is precisely $\Gamma=\bs{\gamma}([0,\ell])$ (no cancellation),  and it
   inherits an oriented tangent
at $H^1$ a.e. point, given by \eqref{RNPi}.
To recap the previous discussion:
\begin{lemma}
    \label{lemcurCl}
    Given $\ell>0$, the map $\bs{\gamma}\mapsto \bs{\pi}(\bs{\gamma})$ is
    a surjection from $\mathcal{U}_\ell$ onto $\mathcal{C}_\ell$; in this case
   $\supp\bs{\pi}(\bs{\gamma})=\bs{\gamma}([0,\ell])$ which has a well-defined oriented tangent  at $H^1$-a.e. point,
    the tangent at $\bs{x}$ being given by $\bs{\gamma}^\prime(t)$ for any
    $t\in[0,\ell]$ such that     $\bs{\gamma}(t)=\bs{x}$.
    It holds that $\div\bs{\pi}(\bs{\gamma})=\delta_{\bs{\gamma}(\ell)}-\delta_{\bs{\gamma}(0)}$, and 
    $\bs{\gamma}$ is a closed curve if and only if $\bs{\pi}(\bs{\gamma})$ is divergence-free.
  \end{lemma}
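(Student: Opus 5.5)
We prove the lemma by collecting the facts established in the discussion preceding it, and checking each claim separately.

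\emph{Surjectivity.} Take $\bs{\mu}\in\mathcal{C}_\ell$. By definition $\bs{\mu}=\bs{\pi}(\bs{\gamma})$ for some rectifiable $\bs{\gamma}$ of length $\ell$ with $\|\bs{\pi}(\bs{\gamma})\|_{TV}=\ell$. Since $\bs{\pi}$ is invariant under reparametrization, we replace $\bs{\gamma}$ by its unit speed reparametrization $\widetilde{\bs{\gamma}}=\bs{\gamma}\circ\psi\in\mathrm{Lip}_1([0,\ell],\R^N)$, which is unchanged as a measure. We must then show $\widetilde{\bs{\gamma}}$ satisfies \eqref{RNPi}. The Radon--Nikodym derivative of $\bs{\pi}(\widetilde{\bs{\gamma}})$ with respect to $H^1\mathcal{b}\Gamma$ is $\bs{\tau}(x)=\sum_{t\in\widetilde{\bs{\gamma}}^{-1}(x)}\widetilde{\bs{\gamma}}'(t)$. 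Hence $\|\bs{\pi}\|_{TV}=\int_\Gamma|\bs{\tau}|\,dH^1$. On the other hand, \eqref{eq|areas|old} gives $\ell=\int_\Gamma\#(\widetilde{\bs{\gamma}},x)\,dH^1$. The triangle inequality yields $|\bs{\tau}|\le\#$, so equality of the integrals forces $|\bs{\tau}(x)|=\#(\widetilde{\bs{\gamma}},x)$ for $H^1$-a.e.\ $x$.

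Because each $|\widetilde{\bs{\gamma}}'(t)|=1$, equality in the triangle inequality for a sum of unit vectors means all summands coincide. Here we restrict to those $x$ whose preimage lies in the full-measure set where $\widetilde{\bs{\gamma}}'$ exists and has norm one; by the area formula, the preimages of a null set of times are $H^1$-null. So for a.e.\ $x$ all the $\widetilde{\bs{\gamma}}'(t)$ with $\widetilde{\bs{\gamma}}(t)=x$ agree. Then $\bs{\tau}(x)=\#\cdot\widetilde{\bs{\gamma}}'(t)$, $|\bs{\pi}|=\#\,H^1\mathcal{b}\Gamma$, and $\bb{u}_{\bs{\pi}}=\bs{\tau}/|\bs{\tau}|=\widetilde{\bs{\gamma}}'(t)$, which is \eqref{RNPi}. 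Conversely, elements of $\mathcal{U}_\ell$ map into $\mathcal{C}_\ell$ by reversing this computation, so the map is well defined and onto.

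\emph{Support and tangent.} Since $|\bs{\pi}(\bs{\gamma})|=\#\,H^1\mathcal{b}\Gamma$ with $\#\ge1$ on $\Gamma$, the support of $\bs{\pi}(\bs{\gamma})$ equals the support of $H^1\mathcal{b}\Gamma$. This support is all of $\Gamma$: any ball centered at a point of $\Gamma=\bs{\gamma}([0,\ell])$ contains a nondegenerate subarc, since curves are nonconstant on intervals, and that subarc has positive $H^1$ measure. The oriented tangent is then read off from \eqref{RNPi}.

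\emph{Divergence.} Testing against $\nabla\varphi$ gives $\langle\bs{\pi}(\bs{\gamma}),\nabla\varphi\rangle=\varphi(\bs{\gamma}(\ell))-\varphi(\bs{\gamma}(0))$, so $\div\bs{\pi}(\bs{\gamma})=\delta_{\bs{\gamma}(\ell)}-\delta_{\bs{\gamma}(0)}$ up to the sign convention used above. This vanishes if and only if $\bs{\gamma}(\ell)=\bs{\gamma}(0)$, i.e.\ the curve is closed.

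The main obstacle is the measure-theoretic care in the equality case: we must ensure that the exceptional time sets map to $H^1$-null sets, which the area formula provides.
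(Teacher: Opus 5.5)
Your proposal is correct and follows essentially the same route as the paper, whose proof is the discussion preceding the lemma. That route compares $\|\bs{\pi}(\bs{\gamma})\|_{TV}=\int_\Gamma|\bs{\tau}|\,dH^1$ with $\ell=\int_\Gamma\#(\bs{\gamma},x)\,dH^1(x)$ to force $|\bs{\tau}|=\#(\bs{\gamma},\cdot)$ $H^1$-a.e., deduces for a unit-speed parametrization that $\bs{\gamma}'(t)$ depends only on $\bs{\gamma}(t)$, and reads off the support, tangent and divergence from there. You supply slightly more detail than the paper: the equality case of the triangle inequality, the support argument via nondegenerate subarcs, and the exceptional sets (there you mean that \emph{images} of Lebesgue-null time sets are $H^1$-null, which holds because $\bs{\gamma}$ is Lipschitz).
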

\begin{rmk}
The map $\bs{\gamma}\mapsto \bs{\pi}(\bs{\gamma})$ is
\emph{not} injective on $\mathcal{U}_\ell$. For instance if $C_1$ (resp. $C_2$) is the circle of radius $1$ centered at $(1,0,\cdots,0)^t$ (resp. $(3,0,\cdots,0)^t)$ in the plane $\{x_j=0,j\geq3\}$, then the unit speed curve
starting at $C_1\cap C_2=\{(2,0,\cdots,0)^t\}$ and winding clockwise
twice around $C_1$  before switching
to $C_2$ and winding twice counterclockwise 
around the latter   has the same image under $\bs{\pi}$ as the unit speed curve
also starting at $(2,0,\cdots,0)^t$ but winding only  once clockwise
around $C_1$ and then once counterclockwise around $C_2$,
before winding once again  clockwise   around $C_1$ and finally once 
counterclockwise around  $C_2$.
  \end{rmk}
  
  Let $\mathcal{C^J_\ell}$ denote the divergence-free elements of $\mathcal{C}_\ell$  and define the functions $\mathfrak{b}$ and $\mathfrak{e}$ from
           $\mathcal{C}_\ell\setminus \mathcal{C^J_\ell} $ to  $\R^\N$
        by making $\mathfrak{b}(\bs{\mu})$ and $\mathfrak{e}(\bs{\mu})$  the beginning point and ending point, respectively, of $\bs{\mu}\in \mathcal{C}_\ell\setminus \mathcal{C^J_\ell}$.
	It follows from what precedes that $\mathfrak{b}$, $\mathfrak{e}$ are well-defined, and that
   \[\mathfrak{b}(\bs{\pi}(\bs{\gamma}))=\bs{\gamma}(a),\quad
		\mathfrak{e}(\bs{\pi}(\bs{\gamma}))=\bs{\gamma}(b),\qquad
	 \text{for }\  \bs{\pi}(\bs{\gamma})\in\mathcal{C}_\ell\setminus\mathcal{C_\ell^J}	
	 \text{ and } \bs{\gamma}\in\textup{Lip}\left([a,b],\R^N\right).\]
      
        \begin{lemma}
          \label{ouvertferme}
          The set $\mathcal{C_\ell^J}$ 
          is closed in $\mathcal{C_\ell}$ for the weak-star topology,
	and the maps $\mathfrak{b}$, $\mathfrak{e}$ are continuous and open
        from $\mathcal{C}_\ell\setminus\mathcal{C_\ell^J}$ to $\R^N$  when
        $\mathcal{C}_\ell\setminus\mathcal{C_\ell^J}$ has the induced weak-star topology.
        \end{lemma}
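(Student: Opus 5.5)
The plan is to do everything with sequences. Closed balls of $\mathcal{M}(\R^N)^N$ are weak-star metrizable, and $\mathcal{C}_\ell$ lies in the ball of radius $\ell$, so sequential arguments suffice. The basic tool is a compactness principle, proved as follows. Let $\bs{\mu}_n=\bs{\pi}(\bs{\gamma}_n)\in\mathcal{C}_\ell$ converge weak-star to $\bs{\mu}\in\mathcal{C}_\ell$. By Lemma \ref{lemcurCl} we may take $\bs{\gamma}_n\in\mathcal{U}_\ell$, so the $\bs{\gamma}_n$ are unit speed on $[0,\ell]$ and $\bs{\gamma}_n([0,\ell])=\supp\bs{\mu}_n$. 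Weak-star convergence gives a uniform bound on the location of these supports. Indeed, each $\supp\bs{\mu}_n$ has diameter at most $\ell$. Pick $\bs{x}_0\in\supp\bs{\mu}$ and a test function $\bs{G}$ supported near $\bs{x}_0$ with $\langle\bs{\mu},\bs{G}\rangle\neq0$. Then $\supp\bs{\mu}_n$ meets a fixed neighborhood of $\bs{x}_0$ for large $n$, so all supports lie in a fixed compact ball. By Arzel\`a--Ascoli every subsequence of $\bs{\gamma}_n$ has a further subsequence converging uniformly to some $\bs{\gamma}\in\mathrm{Lip}_1([0,\ell],\R^N)$. Lemma \ref{lemma|map_pi} then gives $\bs{\pi}(\bs{\gamma})=\bs{\mu}$.

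Closedness of $\mathcal{C}_\ell^J$ follows at once. If $\bs{\mu}_n\in\mathcal{C}_\ell^J$ converge to $\bs{\mu}\in\mathcal{C}_\ell$, then $\div\bs{\mu}=\lim\div\bs{\mu}_n=0$ in the distributional sense, since weak-star convergence passes to distributional derivatives when tested against $\nabla\varphi$ with $\varphi\in C^1_c$. Alternatively, $\bs{\gamma}_n(0)=\bs{\gamma}_n(\ell)$ passes to the uniform limit.

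For continuity of $\mathfrak{b}$, let $\bs{\mu}_n\to\bs{\mu}$ in $\mathcal{C}_\ell\setminus\mathcal{C}_\ell^J$. Along any subsequence, extract $\bs{\gamma}_n\to\bs{\gamma}$ uniformly with $\bs{\pi}(\bs{\gamma})=\bs{\mu}$. Since $\bs{\mu}$ is not divergence free, its endpoints are uniquely determined, so $\mathfrak{b}(\bs{\mu})=\bs{\gamma}(0)=\lim\bs{\gamma}_n(0)=\lim\mathfrak{b}(\bs{\mu}_n)$. This works even though $\bs{\gamma}$ need not lie in $\mathcal{U}_\ell$, because $\bs{\pi}(\bs{\gamma})=\bs{\mu}$ still gives $\div\bs{\mu}=\delta_{\bs{\gamma}(\ell)}-\delta_{\bs{\gamma}(0)}$. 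Hence every subsequence has a sub-subsequence with the same limit, which proves continuity. The argument for $\mathfrak{e}$ is identical.

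Openness is the main obstacle. Fix $\bs{\mu}=\bs{\pi}(\bs{\gamma})$ with $\bs{\gamma}\in\mathcal{U}_\ell$ not closed, and a weak-star neighborhood $W$ of $\bs{\mu}$. I must show that $\mathfrak{b}(W\cap(\mathcal{C}_\ell\setminus\mathcal{C}_\ell^J))$ contains a ball around $\bs{x}_0=\bs{\gamma}(0)$. For $\bs{y}$ near $\bs{x}_0$ I would build a new curve, staying inside $\mathcal{C}_\ell$ so that the total variation stays equal to the length. To do so, take $\delta>0$ small and let $\bs{\gamma}_{\bs{y}}$ be the concatenation of the segment $[\bs{y},\bs{\gamma}(\delta)]$ of length $|\bs{y}-\bs{\gamma}(\delta)|$ with $\bs{\gamma}_{|[\delta,\ell']}$. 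Here $\ell'$ is adjusted so that the total length is exactly $\ell$; this is possible since $|\bs{y}-\bs{\gamma}(\delta)|\le|\bs{y}-\bs{x}_0|+\delta$ is small. If the length is too short instead, I would append a short piece of $\bs{\gamma}$ back, or a short extra segment at the start chosen to avoid overlap.

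The delicate point is that the new curve must lie in $\mathcal{U}_\ell$: its tangent must depend only on position, so the segment must not overlap $\Gamma$ with opposite orientation on a set of positive $H^1$ measure. Overlap on a positive-measure set is required for cancellation; a segment meets $\Gamma$ in positive $H^1$ measure only in degenerate situations. For $\bs{y}$ in a small ball, I would handle these by choosing the junction parameter $\delta$ among uncountably many values. By a Fubini/counting argument, for all but countably many directions the segment meets the rectifiable set $\Gamma$ in an $H^1$-null set. The endpoint must still be exactly $\bs{y}$, so if the direction is bad I would replace the straight segment by a two-segment polygonal path through a generic intermediate point.

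Finally, as $|\bs{y}-\bs{x}_0|,\delta\to0$, the curves $\bs{\gamma}_{\bs{y}}$, suitably parametrized, converge uniformly to $\bs{\gamma}$ in $\mathrm{Lip}_1$. Lemma \ref{lemma|map_pi} then gives $\bs{\pi}(\bs{\gamma}_{\bs{y}})\in W$ for $\bs{y}$ close to $\bs{x}_0$. These curves are not closed, since their endpoints stay near the distinct points $\bs{x}_0\neq\bs{\gamma}(\ell)$. This proves openness of $\mathfrak{b}$, and $\mathfrak{e}$ is treated symmetrically by modifying the final portion of the curve.
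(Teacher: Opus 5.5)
Your proof is correct. The closedness part is exactly the paper's argument: weak-star convergence passes to the distributional divergence. For continuity and openness, however, you take a heavier route.

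For continuity, the paper never returns to curves. On $\mathcal{C}_\ell\setminus\mathcal{C}_\ell^{J}$ one has $\div\bs{\mu}_n=\delta_{\mathfrak{e}(\bs{\mu}_n)}-\delta_{\mathfrak{b}(\bs{\mu}_n)}$. Distributional convergence of these to $\delta_{\mathfrak{e}(\bs{\mu})}-\delta_{\mathfrak{b}(\bs{\mu})}$, with $\mathfrak{e}(\bs{\mu})\neq\mathfrak{b}(\bs{\mu})$, already forces the atoms to converge. Your Arzel\`a--Ascoli/sub-subsequence argument is valid; it is essentially the one the paper uses to show $\mathfrak{U}_\ell\cap\mathcal{M}(\overline{B(\bs{0},R)})^N$ is closed. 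It is just not needed here.

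For openness, the paper sidesteps what you call the main obstacle by \emph{translating the whole curve}. Take $\bs{\gamma}+\rho\bs{u}$ with $|\bs{u}|=1$ and $\rho<\varepsilon$. It is still unit speed with tangent depending only on position, so it lies in $\mathcal{U}_\ell$ and no new cancellation can arise. It is not closed, it stays uniformly within $\rho$ of $\bs{\gamma}$, and its beginning point $\bs{x}_0+\rho\bs{u}$ sweeps out $B(\bs{x}_0,\varepsilon)$. Lemma \ref{lemma|map_pi} then finishes.

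Your local surgery can be made rigorous, but only through the polygonal detour. Take $\bs{z}$ on the ellipsoid $\{|\bs{y}-\bs{z}|+|\bs{z}-\bs{\gamma}(\delta)|=L\}$. Use $L=\delta$ when $|\bs{y}-\bs{\gamma}(\delta)|<\delta$; otherwise take $L$ slightly larger and truncate $\bs{\gamma}$ at $\ell'=\ell+\delta-L$. The point $\bs{z}$ must avoid the two vertices on the major axis. It must also avoid the countably many rays from $\bs{y}$ or from $\bs{\gamma}(\delta)$ that meet $\bs{\gamma}([\delta,\ell'])$ in positive $H^1$-measure; each such ray meets the ellipsoid only once, so this removes countably many points.

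Two of your proposed fixes do not work as written. Appending ``a short piece of $\bs{\gamma}$ back'' retraces $\Gamma$ with the opposite orientation, which is precisely the cancellation that takes the curve out of $\mathcal{U}_\ell$. Varying $\delta$ alone need not change the direction of $[\bs{y},\bs{\gamma}(\delta)]$: take $\bs{\gamma}$ straight near $0$ and $\bs{y}$ on that line.

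The only thing the surgery buys over translation is that the perturbation is localized near $\bs{x}_0$, up to a small truncation at the far end. The lemma does not require this.
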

\begin{proof}
  Because $\mathcal{C}_\ell$ is contained in the closed ball of radius $\ell$
  of $\mathcal{M}(\R^N)^N$ we may restrict to sequential continuity, and
  weak-star convergence of a sequence of measures clearly implies distributional convergence of their divergence. Thus, since $\mathcal{C_\ell^J}$ is the kernel of the divergence operator on $\mathcal{C}_\ell$,  it must be  closed in the latter for the weak-star topology.
            Moreover, distributional convergence of the divergence
  on $\mathcal{C}_\ell\setminus\mathcal{C_\ell^J}$  implies
  convergence of its support (namely the beginning and ending points), say, for the Hausdorff distance in $\R^N$; therefore, $\mathfrak{b}$ and $\mathfrak{e}$ are weak-star  continuous
        $\mathcal{C}_\ell\setminus\mathcal{C_\ell^J}\to\R^N$.
        Let now $\bs{\mu}\in \mathcal{C}_\ell\setminus\mathcal{C_\ell^J}$
    and put $\bs{x}_0:=\mathfrak{e}(\bs{\mu})$. By Lemma \ref{lemcurCl} we can write $\bs{\mu}=\bs{\pi}(\bs{\gamma})$ with $\bs{\gamma}\in\mathcal{U}_\ell$,
    and translating $\bs{\gamma}$ in any direction $\bs{u}\in\mathbb{S}(0,1)$ by an amount $\rho<\varepsilon$ we define a collection of 
    oriented curves $\bs{\gamma}_{\rho,\bs{u}}\in\mathcal{U}_\ell$ such that 
    $\mathfrak{e}(\bs{\gamma}_{\rho,\bs{u}})$ ranges over $B(\bs{x}_0,\varepsilon)$ as $\bs{u}$ ranges over $\mathbb{S}(0,1)$ and $\rho$ over $[0,\varepsilon)$.
    By the continuity of $\bs{\pi}$ restricted to $\textup{Lip}_1([0,\ell],\R^N)$ asserted in Lemma \ref{lemma|map_pi},
    $\bs{\pi}(\bs{\gamma}_{\rho,\bs{u}})$ belongs to any given weak-star neighborhood  of $\bs{\mu}$ for $\varepsilon$ small enough. Thus, $\mathfrak{e}:\mathcal{C}_\ell\setminus\mathcal{C_\ell^J}\to\R^N$ is an open map and a similar argument works for
    $\mathfrak{b}$.
  \end{proof}
  
Now, suppose that $\bs{\mu}\in\mathcal{M}(\R^\N)^\N$ is a solenoid;
{\it i.e.},   $\div\bs\mu=0$ 
as a distribution.
Then, it follows from  \cite[Theorem A]{Smi94} that for \emph{any} fixed $\ell>0$, the measure
$\bs{\mu}$ can be decomposed as a superposition of elements from $\mathcal{C}_\ell$.
More precisely, there is a positive  finite Borel
measure $\Xi$ on $\mathcal{C}_\ell$  such that $\Xi$-a.e. $\bs{\nu}$ is
supported in $\textrm{supp}\,\bs{\mu}$ and
\begin{equation}\label{Smiw}
\langle\bs{\mu},\bs{g}\rangle=\int_{\mathcal{C}_\ell} \langle
\bs{\nu},\bs{g}\rangle d\Xi(\bs{\nu}),
\qquad
\langle|\bs{\mu}|,\varphi\rangle=\int_{\mathcal{C}_\ell} \langle
|\bs{\nu}|,\varphi\rangle d\Xi(\bs{\nu}),
\end{equation}
for all $\bs{g}\in  C_c^\infty(\R^N,\R^N)$ and $\varphi\in  C_c^\infty(\R^N)$.
Actually, in  \cite{Smi94}, the set $\mathcal{C}_\ell$ is not introduced
and the integrals corresponding to
\eqref{Smiw}  are  taken over $\mathfrak{C}_\ell$. However  it follows
from \cite[Equation (1.10)]{Smi94}  that
$\Xi$ does not charge $\mathfrak{C}_\ell\setminus\mathcal{C}_\ell$, and therefore we may restrict these integrals to $\mathcal{C}_\ell$ without loss of generality.
By mollification the identities in
\eqref{Smiw}  more generally hold for 
$\bs{g}\in  C_c(\R^N,\R^N)$ and $\varphi\in  C_c(\R^N)$,  ensuing  by approximation of characteristic functions (see \cite[Lemma 8.3]{BVH}) that
for each Borel set $B\subset\R^\N$:
\begin{equation}\label{eq|Smi1|old}
	\bs{\mu}(B)=\int_{\mathcal{C}_\ell} \bs{\nu}(B)\ d\Xi(\bs{\nu}),
	\qquad
	|\bs{\mu}|(B)=\int_{\mathcal{C}_\ell} |\bs{\nu}|(B)\ d\Xi(\bs{\nu}).
\end{equation}

%



\subsection{Phase portraits associated with divergence-free measures}

The theorem below  is fundamental to our purposes.
It allows us to construe (generally in a non-unique manner) Smirnov's decomposition of a divergence-free measure $\bs{\mu}$ as a ``phase portrait''
of the differential equation $\bs{x}'=\bb{u}_{\bs{\mu}}\circ\bs{x}$, at least
 $|\bs{\mu}|$-a.e. 
The main idea of the proof 
is to find a large set of curves in the decomposition
\eqref{eq|Smi1|old} that
we can concatenate without leaving the support of $\bs{\mu}$,
and then to show that the image of curves not belonging to this set has zero $|\bs{\mu}|$-measure.   We mention  \cite[Proof of Theorem 2.1]{SmirnovKhavin1998} for a (different) 
construction of special 
integral curves joining given initial and terminal points,
when both $\bs{\mu}$ and $\mathrm{div}\bs{\mu}$ are  measures and $\bs{\mu}$ is
an extreme point  of the convex set $\{\supp \bs{\mu}\subset K,\ \|\bs{\mu}\|_{TV}\leq1,
\|\mathrm{div}\bs{\mu}\|_{TV}\leq1\}$.

\begin{theorem}\label{prop|smi_tra}
	Let $\bs{\mu}\in\mathcal{M}(\R^\N)^\N$ be  divergence-free and  $X$ be a $\bs{\mu}$-measurable set of full $|\bs{\mu}|$-measure.	
	Given $t_0\in\R$, there exists for $|\bs{\mu}|$-a.e. $x_0\in X$ a Lipschitz continuous function $\bs{f}:\R\to\R^\N$ with $\bs{f}(t_0)=x_0$
        such that $\bs{f}(t)\in X$ and $\bs{f}'(t)=\bb{u}_{\bs{\mu}}(\bs{f}(t))$
        for a.e. $t\in\R$.
%
\end{theorem}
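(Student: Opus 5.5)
The plan is to work from Smirnov's decomposition \eqref{eq|Smi1|old} with a fixed $\ell>0$, say $\ell=1$. We write $\bs{\mu}=\int_{\mathcal{C}_\ell}\bs{\nu}\,d\Xi(\bs{\nu})$ and $|\bs{\mu}|=\int|\bs{\nu}|\,d\Xi$. Every $\bs{\nu}\in\mathcal{C}_\ell$ equals $\bs{\pi}(\bs{\gamma})$ with $\bs{\gamma}\in\mathcal{U}_\ell$ by Lemma \ref{lemcurCl}.

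\textbf{Step 1: for $\Xi$-a.e. $\bs{\nu}$, the curve is an integral curve inside $X$.} We show that for $\Xi$-a.e. $\bs{\nu}=\bs{\pi}(\bs{\gamma})$ two properties hold.
\begin{enumerate}
\item $\bb{u}_{\bs{\nu}}=\bb{u}_{\bs{\mu}}$ holds $|\bs{\nu}|$-a.e.
\item $|\bs{\nu}|(\R^N\setminus X)=0$.
\end{enumerate}
The second follows from $|\bs{\mu}|(\R^N\setminus X)=0$ and the second identity in \eqref{eq|Smi1|old}. For the first, integrate $\bb{u}_{\bs{\mu}}$ against both identities. This gives
\[
|\bs{\mu}|(\R^N)=\int \bb{u}_{\bs{\mu}}\cdot d\bs{\mu}=\int\!\!\int \bb{u}_{\bs{\mu}}\cdot\bb{u}_{\bs{\nu}}\,d|\bs{\nu}|\,d\Xi .
\]
This step needs \eqref{eq|Smi1|old} extended from sets to bounded Borel integrands, which follows by simple-function approximation. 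The right-hand side is at most $\int|\bs{\nu}|(\R^N)\,d\Xi=|\bs{\mu}|(\R^N)$, so equality forces $\bb{u}_{\bs{\mu}}\cdot\bb{u}_{\bs{\nu}}=1$ $|\bs{\nu}|$-a.e. for $\Xi$-a.e. $\bs{\nu}$.

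By \eqref{RNPi} and the area formula \eqref{eq|areas|old}, the set of $t$ with $\bs{\gamma}(t)$ in a $|\bs{\nu}|$-null set has Lebesgue measure zero; this is where no cancellation is used. Hence for such good $\bs{\nu}$ we get $\bs{\gamma}(t)\in X$ and $\bs{\gamma}'(t)=\bb{u}_{\bs{\mu}}(\bs{\gamma}(t))$ for a.e. $t\in[0,\ell]$. Call the set of good elements $\mathcal{G}$. Then $\Xi(\mathcal{C}_\ell\setminus\mathcal{G})=0$, so $|\bs{\mu}|$-a.e. point lies on the trace of good curves. More precisely, the points not covered by good curves form a $|\bs{\mu}|$-null set, since $|\bs{\mu}|(B)=\int_{\mathcal{G}}|\bs{\nu}|(B)\,d\Xi$.

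\textbf{Step 2: concatenation to extend to all of $\R$.} The main obstacle is extending a finite good curve through $x_0$ to a curve on all of $\R$. This requires concatenating: at the ending point $\mathfrak{e}(\bs{\nu})$ we must find another good curve beginning there. Divergence-freeness is what makes this possible. We use the measures $\mathfrak{b}_\#(\Xi\mathcal{b}(\mathcal{C}_\ell\setminus\mathcal{C}^J_\ell))$ and $\mathfrak{e}_\#(\cdots)$, which are well defined by the continuity in Lemma \ref{ouvertferme}. Since $\div\bs{\mu}=\int(\delta_{\mathfrak{e}}-\delta_{\mathfrak{b}})\,d\Xi=0$, these two pushforwards coincide; call the common measure $\eta$.

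Now restrict to the good set. Since $\Xi$ is concentrated on $\mathcal{G}$, the pushforwards of $\Xi\mathcal{b}\mathcal{G}$ by $\mathfrak{b}$ and $\mathfrak{e}$ agree. For $\eta$-a.e. point $y$, the disintegrations $\Xi^{\mathfrak{b}}_y$ and $\Xi^{\mathfrak{e}}_y$ are then probability measures carried by good curves that begin, respectively end, at $y$. Define the bad set $Z_0$ as the set of endpoints $y$ admitting no good continuation; it is $\eta$-null.

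Iterate with countably many exclusions: discard the curves whose forward or backward chain of continuations ever hits a null set of endpoints. The measure of the discarded curves is bounded by sums of null sets. The key point is that $\mathfrak{e}_\#(\Xi\mathcal{b}\{\bs{\nu}:\mathfrak{b}(\bs{\nu})\in Z\})=0$ whenever $\eta(Z)=0$, and applying this inductively keeps everything null. Closed curves in $\mathcal{C}^J_\ell$ can simply be repeated periodically. For a good curve in the remaining full-measure family, choose successors forward and predecessors backward, each of length $\ell$. The concatenation is $1$-Lipschitz, defined on $\R$, stays in $X$ a.e., and solves $\bs{f}'=\bb{u}_{\bs{\mu}}\circ\bs{f}$ a.e. Finally, translate the time so that $\bs{f}(t_0)=x_0$.

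Measurability of the selections is not needed, because we only need existence for each $x_0$. The real technical care lies in verifying the null-set propagation in Step 2, possibly after replacing $\Xi$ by its restriction to curves whose endpoints lie in a Borel set of full $\eta$-measure. The exceptional set of $x_0$ is the union of the traces of discarded curves, which has $|\bs{\mu}|$-measure $\int_{\text{discarded}}|\bs{\nu}|\,d\Xi=0$.
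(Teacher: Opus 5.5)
Your outline follows the paper's proof. You use Smirnov's decomposition with $\ell=1$. The equality case of $\int\bb{u}_{\bs{\mu}}\cdot d\bs{\mu}=\|\bs{\mu}\|_{TV}$ forces $\bb{u}_{\bs{\nu}}=\bb{u}_{\bs{\mu}}$ and $|\bs{\nu}|(\R^N\setminus X)=0$ for $\Xi$-a.e.\ $\bs{\nu}$. You then use the identity $\mathfrak{b}_\#\Xi=\mathfrak{e}_\#\Xi$ on non-closed curves, remove null families, and repeat closed curves periodically. Step~1 is fine: simple functions replace the paper's Lusin approximation, provided you replace $\R^N\setminus X$ by a Borel $|\bs{\mu}|$-null superset.

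The gap is in the concatenation. The ``null-set propagation'' you single out is the easy part. Let $\mathcal{G}_0:=\mathcal{G}$, and let $\mathcal{G}_{n+1}$ consist of the members of $\mathcal{G}_n$ having both a successor and a predecessor in $\mathcal{G}_n$. Then $\Xi(\mathcal{G}_n\setminus\mathcal{G}_{n+1})\le\eta(\R^N\setminus\mathfrak{b}(\mathcal{G}_n))+\eta(\R^N\setminus\mathfrak{e}(\mathcal{G}_n))=0$, just as in the paper's induction $\Xi(\mathcal{B}^n)=0$.

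What is not justified is ``choose successors forward and predecessors backward'' inside $\bigcap_n\mathcal{G}_n$. A member of that family has, for each $n$, \emph{some} successor in $\mathcal{G}_n$, but possibly none in $\bigcap_n\mathcal{G}_n$. The reason is that $\mathfrak{b}(\bigcap_n\mathcal{G}_n)$ can be strictly smaller than $\bigcap_n\mathfrak{b}(\mathcal{G}_n)$. Think of an endpoint from which start curves whose forward chains have every finite maximal length, but none is infinite. Countably many exclusions certify continuation to every finite depth, not to infinite depth. There is no compactness available for a K\"onig-type argument. The paper's $\mathcal{B}^n$ recursion is built the same way, so this step needs the same supplement there.

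The disintegration you introduced closes the gap if you use it at the level of measures. Take the kernel $K(\bs{\nu},\cdot):=\Xi^{\mathfrak{b}}_{\mathfrak{e}(\bs{\nu})}$ on non-closed curves. Then
\[
\int K(\bs{\nu},\mathcal{A})\,d\Xi(\bs{\nu})=\int \Xi^{\mathfrak{b}}_{y}(\mathcal{A})\,d(\mathfrak{e}_\#\Xi)(y)=\int \Xi^{\mathfrak{b}}_{y}(\mathcal{A})\,d(\mathfrak{b}_\#\Xi)(y)=\Xi(\mathcal{A}),
\]
so the Markov chain started from $\Xi$ has law $\Xi$ at every step. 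A union bound over the steps shows that almost surely every step lies in $\mathcal{G}$ and consecutive curves match. Hence $\Xi$-a.e.\ curve has an infinite forward chain of good curves, and likewise backward using $\Xi^{\mathfrak{e}}$. Your stated ``key point'' $\mathfrak{e}_\#(\cdots)=0$ is trivially true and cannot replace this invariance. Images under $\mathfrak{e}$ of $\Xi$-null families of curves need not be $\eta$-null.

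The closing count is also incorrect. The set of bad $x_0$ is not the union of the traces of discarded curves, and such a union need not be $|\bs{\mu}|$-null. Indeed, \eqref{eq|Smi1|old} gives $|\bs{\mu}|(B)=\int|\bs{\nu}|(B)\,d\Xi$ with the integral taken over \emph{all} curves. A $\Xi$-null family can have traces of full measure. For example, take the rotation field on the unit disc with its rotation-invariant decomposition into unit arcs; the arcs starting at the angles $2\pi k/7$, $0\le k\le 6$, form a $\Xi$-null family covering the disc.

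The right argument is the one you sketched in Step~1. If $I$ is the union of the traces of the retained curves, each retained $\bs{\nu}$ satisfies $|\bs{\nu}|(\R^N\setminus I)=0$, hence $|\bs{\mu}|(\R^N\setminus I)=0$. This presupposes that $I$ is measurable, which a union of uncountably many traces need not be. The paper handles this by passing to a $\sigma$-compact subfamily $\mathcal{Y}$ of full measure, so that $I=\mathfrak{Ev}(\bs{\pi}^{-1}(\mathcal{Y})\times[0,1])$ is $\sigma$-compact. It likewise makes the sets fed to $\beta=\epsilon$, such as $\mathfrak{e}(\mathcal{X}\setminus\mathcal{B}^{n-1})$, measurable via $\sigma$-compactness and Lemma~\ref{ouvertferme}; universal measurability of analytic sets would also serve. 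So ``measurability is not needed'' holds for the selections only, not for the sets to which you apply $\eta$ and \eqref{eq|Smi1|old}.
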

\begin{proof}
	Fix a positive Borel measure $\Xi$
        on $\mathcal{C}_1$ such that \eqref{eq|Smi1|old} is satisfied with $\ell=1$. We may assume without loss of generality that $\Xi$ is a Borel measure
        on the closed unit ball $\mathcal{B}$ of $\mathcal{M}(\R^\N)^\N$ which is carried by $\mathcal{C}_1$, and that $\Xi$ is 
        complete meaning that subsets of $\mathcal{B}$ squeezed between two nested Borel sets having the same $\Xi$-measure are $\Xi$-measurable \cite[Theorem 1.36]{Rudinrca}. Appealing to  Lusin's theorem we can find a sequence of
        functions  ${\bs u}_k\in C_c(\R^N,\R^N)$, with $|{\bs u}_k|\leq1$ everywhere, that converges pointwise  to $\bb{u}_{\bs{\mu}}$ outside a Borel set $E$
        of $|\bs{\mu}|$-measure zero. By the Banach-Alaoglu theorem
        the  function $\bs{\nu}\mapsto|\bs{\nu}|(F)$ is lower semicontinuous
        for the weak-star topology on $\mathcal{B}$ when $F$ is closed, thus also for any  Borel set $F\subset\mathcal{B}$ by inner regularity of finite Borel measures on $\R^N$.
        In particular $\bs{\nu}\mapsto|\bs{\nu}|(E)$ is measurable, and since
        $|\bs{\mu}|(E)=0$
        it integrates to zero against $\Xi$ in view of \eqref{eq|Smi1|old}.
        Hence, there is a Borel set $O$ of $\Xi$-measure zero such that whenever $\mathcal{C}_1\ni\bs{\nu}\not\in O$, one has $\lim_k {\bs u}_k(\bs{x})={\bs u}_{\bs{\mu}}(\bs{x})$
        for $|\bs{\nu}|$-a.e. $\bs{x}$. Thus,
	applying  \eqref{Smiw} with ${\bs g}={\bs u}_k$ and passing to the limit when $k\to\infty$ using the dominated convergence theorem, we obtain that
	\begin{align}
           \label{eq|RaNi_deco}
          \lim_k\int_{\mathcal{C}_1} \left(\int \bb{u}_k\cdot d\bs{\nu}\right)d\Xi(\bs{\nu}) =\lim_k\int\bb{u}_k\cdot d\bs{\mu}=\int\bb{u}_{\bs{\mu}}\cdot d\bs{\mu}\\
          =\|\bs\mu\|_{TV}=|\bs{\mu}|(\R^N) =\int_{\mathcal{C}_1} \|\bs{\nu}\|_{TV}\,d\Xi(\bs{\nu})=\Xi(\mathcal{C}_1),
	\end{align}
	where the next-to-last equality uses \eqref{eq|Smi1|old}  and the
        last uses that $\|\bs{\nu}\|_{TV}=1$ for $\bs{\nu}\in\mathcal{C}_1$.
        Besides, for fixed $k$ the set of $\bs{\sigma}\in\mathcal{M}(\R^N)^N$ such that $\langle {\bs u}_k,\bs{\sigma}\rangle\leq 1-\varepsilon$ is closed in the weak-star topology by definition of the latter, and since
        $|\int \bb{u}_k\cdot d\bs{\nu}|\leq 1$ for all $\bs{\nu}\in\mathcal{C}_1$ while $\lim_k\int\langle{\bs u}_k,\bs{\nu}\rangle d\Xi(\bs{\nu})\to \Xi(\mathcal{C}_1)$ by \eqref{eq|RaNi_deco}, we deduce that there is a Borel subset $V$ of
        $\mathcal{C}_1$ with $\Xi(V)=0$ such that   $\lim_k\int \bb{u}_k\cdot d\bs{\nu}= 1$ when $\bs{\nu}\not\in V$. Altogether, $W:=O\cup V$ is a Borel  subset of $\mathcal{C}_1$ of $\Xi$-measure zero such that each $\bs{\nu}\in\mathcal{C}_1\setminus W$ satisfies    $\lim_k{\bs u}_k\to{\bs u}_{\bs{\mu}}$ pointwise $\bs{\nu}$ a.e as well as $\lim_k\int \bb{u}_k\cdot d\bs{\nu}= 1$, hence also
        $\int \bb{u}_{\bs{\mu}}\cdot d\bs{\nu}= 1$ by the dominated convergence theorem.   Evidently for such $\bs{\nu}$  it holds $|\bs{\nu}|$-a.e. that
        $\bb{u}_{\bs\mu}=\bb{u}_{\bs{\nu}}$, and if we write $\bs{\nu}=\bs{\pi}(\bs{\gamma})$ with $\bs{\gamma}\in\mathcal{U}_1$ (see Lemma \ref{lemcurCl}), we see from \eqref{RNPi} that
      \[
{\bs u}_{\bs{\mu}}(\bs{\gamma}(t))=\bb{u}_{\bs{\pi}(\bs{\gamma})}(\bs{\gamma}(t))=\bs{\gamma}'(t)\qquad \bs{\gamma}\in\mathcal{U}_1,\quad\bs{\pi}(\bs{\gamma})\not\in W,\quad  \mathrm{\ a.e.\  }t\in[0,1].
        \]
        In another connection, since $X$ is $|\bs{\mu}|$-measurable of full measure while $|\bs{\mu}|$ is regular because it is a finite Borel measure on a metric space \cite[Theorem 1.1]{Bil}, there is a Borel set $G\supset \R^N\setminus X$ such that
        $|\bs{\mu}|(G)=0$ and therefore, arguing as before only replacing $E$ by $G$, we deduce that there is a Borel subset $O_1$ of $\Xi$-measure zero such that $|\bs{\nu}|(G)=0$ whenever $\mathcal{C}_1\ni\bs{\nu}\not\in O_1$.
        In particular,  if we write $\bs{\nu}=\bs{\pi}(\bs{\gamma})$ with $\bs{\gamma}\in\mathcal{U}_1$, we get since $d|\bs{\pi}(\bs{\gamma})|= \#(\bs{\gamma},x)d H^1\mathcal{b}\Gamma$ (see discussion before \eqref{RNPi})
         that $\bs{\gamma}(t)\in X$ for a.e. $t\in[0,1]$ when $\bs{\gamma}\in\mathcal{U}_1$ but $\bs{\pi}(\bs{\gamma})\not\in O_1$.

         Altogether, we find that $\widetilde{\mathcal{X}}:=\mathcal{C}_1\setminus(W\cup O_1)$ is a Borel set of full $\Xi$-measure such that 
                \[
	\widetilde{\mathcal{X}}\subset\{\bs{\pi}(\bs{\gamma})\in\mathcal{C}_1:
	\bs{\gamma}\in\mathcal{U}_1\ \text{and for  a.e. } t\in[0,1]\ 
 	\bb{u}_{\bs\mu}(\bs{\gamma}(t))=\bs{\gamma}'(t) \text{ as well as }\bs{\gamma}(t)\in X\}.
      	\]
        Since $\mathcal{C}^{\mathcal{J}}_1$ is closed in $\mathcal{C}_1$ by Lemma \ref{ouvertferme},
        the set $\widetilde{\mathcal{X}}\setminus\mathcal{C}^{\mathcal{J}}_1$ is Borel
        and so, by the regularity of Borel measures on metric spaces again,
        we can find         
           a countable union of nested closed sets in $\mathcal{B}$, say $\mathcal{X}=\cup_k\mathcal{F}_j$, which is included in  $\widetilde{\mathcal{X}}\setminus\mathcal{C}^{\mathcal{J}}_1$ an such that
           $\Xi(\mathcal{X})=\Xi(\widetilde{\mathcal{X}}\setminus\mathcal{C}^{\mathcal{J}}_1)$.
        The $\mathcal{F}_j$ are in fact compact for the weak-star topology, because so is $\mathcal{B}$.
	
	Let $\beta,\epsilon\in\mathcal{M}(\R^\N)^+$ (positive Borel measures on
        $\R^N$) be defined for $\phi\in C_c(\R^\N)$ by
	\begin{equation}
          \label{defbe}
		\langle{\beta},\phi\rangle:=\int\limits_{\mathcal{C}_1\setminus\mathcal{C}^{\mathcal{J}}_1}\langle\delta_{\mathfrak{b}(\bs{\nu})},\phi\rangle\,d\Xi(\bs{\nu})
		\quad \text{ and } \quad
		\langle{\epsilon},\phi\rangle:=\int\limits_{\mathcal{C}_1\setminus\mathcal{C}^{\mathcal{J}}_1}\langle\delta_{\mathfrak{e}(\bs{\nu})},\phi\rangle\,d\Xi(\bs{\nu}).
	\end{equation}
	Computing the divergence under the integral sign in \eqref{Smiw}
         and using Lemma \ref{lemcurCl}   (where $\ell$ is set to $1$), we deduce  that for $g\in C_c^\infty(\R^N)$
                 \[
            \langle \div\bs{\mu},g\rangle=\int\limits_{\mathcal{C}_1}\langle\div\bs{\nu}, g\rangle\,d\Xi(\bs{\nu})=
            \int\limits_{\mathcal{C}_1\setminus\mathcal{C^J}}\langle\delta_{\mathfrak{e}(\bs{\nu})}-\delta_{\mathfrak{b}(\bs{\nu})}\,,\, g\rangle\,d\Xi(\bs{\nu}).
            \]
            Thus, since $\bs{\mu}$ is divergence-free, we get  that $\langle\epsilon,\phi\rangle=\langle\beta,\phi\rangle$ for $\phi\in C_c^\infty(\R^N)$, whence $\epsilon=\beta$.
	For  $U\subset\R^\N$ an open set, it follows  from \eqref{defbe} by a standard argument based on the inner regularity of finite Borel measures on $\R^N$ and Urysohn's lemma  that 
	\begin{equation*}
          \beta(U)=\!\!\!\!\sup_{\stackrel{\phi\in C_c(U)}{0\leq\phi\leq1}}\langle\beta,\phi\rangle
          =\int\limits_{\mathcal{C}_1\setminus\mathcal{C^J}}\chi_U(\mathfrak{b}(\bs{\nu}))\,d\Xi(\bs{\nu}),
		\quad \text{ and likewise} \quad
		\epsilon(U)=\int\limits_{\mathcal{C}_1\setminus\mathcal{C^J}}\chi_U(\mathfrak{e}(\bs{\nu}))\,d\Xi(\bs{\nu}).
	\end{equation*}
	By outer regularity of finite Borel measures on $\R^N$,
        the previous equality holds in fact for any Borel set $B$ instead of $U$. That is to say,  for every Borel set $B\subset\R^N$ one has
	\begin{equation}\label{eq|betaeta}
		\beta(B)
		=\int\limits_{\mathcal{C}_1\setminus\mathcal{C^J}}\chi_{\mathfrak{b}^{-1}(B)}(\bs{\nu})\,d\Xi(\bs{\nu})
		=\Xi(\mathfrak{b}^{-1}(B)),
		\ \text{ and \ likewise }\ 
		\epsilon(B)=\Xi(\mathfrak{e}^{-1}(B)).
	\end{equation}

	Our  next step is to define two specific sets $\mathcal{B}^F,\mathcal{E}^F\subset\mathcal{X}$;
        roughly speaking, $\mathcal{X}\setminus\mathcal{B}^F$ will be  the image under $\bs{\pi}$ of
       a ``big'' set of curves in $\bs{\pi}^{-1}(\mathcal{X})\cap\mathcal{U}_1$ that can be indefinitely continued
     ``backwards'' by concatenation with other curves from
       $\bs{\pi}^{-1}(\mathcal{X})\cap\mathcal{U}_1$; analogously, $\mathcal{X}\setminus\mathcal{E}^F$ will correspond to  curves in
        $\bs{\pi}^{-1}(\mathcal{X})\cap\mathcal{U}_1$ that can be indefinitely continued ``forwards''. 
	Here, the continuation may use infinitely many times the same curve.
	We start by defining
	\[\mathcal{B}^0:=\{\bs{\nu}\in\mathcal{C}_1\setminus\mathcal{C}^{\mathcal{J}}_1 : \mathfrak{b}(\bs{\nu})\not\in\mathfrak{e}(\mathcal{X})\}
	=\mathfrak{b}^{-1}(\R^\N\setminus\mathfrak{e}(\mathcal{X}))\]
	(corresponding to
        curves that do not begin where a curve of $\bs{\pi}^{-1}(\mathcal{X})\cap\mathcal{U}_1$ ends) and
        note, since $\mathfrak{e}(\mathcal{X})$ is a countable union of nested compact sets and $\mathfrak{b}$ is continuous by Lemma \ref{ouvertferme}, that $\mathcal{B}^0$ is a countable intersection of nested relative open sets in $\mathcal{C}_1\setminus\mathcal{C}^{\mathcal{J}}_1$, hence a Borel set in $\mathcal{B}$.
	We first  show by induction that the same is true of every $\mathcal{B}^n$.  To this end, let us assume for some $n>0$ that $\mathcal{B}^{n-1}$ is a countable intersection of nested relative open sets $\mathcal{O}^{n-1}_k$ in $\mathcal{C}_1\setminus\mathcal{C}^{\mathcal{J}}_1$, so that
        $\left(\mathcal{C}_1\setminus \mathcal{C}_1^{\mathcal{J}}\right)\setminus \mathcal{O}^{n-1}_k=\mathcal{K}_k^{n-1}\cap \left(\mathcal{C}_1\setminus \mathcal{C}_1^{\mathcal{J}}\right)$ where  the $\mathcal{K}_k^{n-1}$ are nested  closed
        subsets of $\mathcal{B}$,  and let
	\begin{equation}
          \label{recB}
          \mathcal{B}^{n}:=
          \mathfrak{b}^{-1}(\cap_k\mathfrak{e}({\mathcal{O}}_k^{n-1})\setminus\mathfrak{e}(\mathcal{X}\setminus{\mathcal{B}}^{n-1}))\supset\mathfrak{b}^{-1}(\mathfrak{e}({\mathcal{B}}^{n-1})\setminus\mathfrak{e}(\mathcal{X}\setminus{\mathcal{B}}^{n-1}))
	\end{equation}
	($ \mathcal{B}^{n}$ contains the members of
          $\mathcal{C}_1\setminus\mathcal{C}^{\mathcal{J}}_1$
          whose beginning  point is an endpoint of a member of
        $\mathcal{B}^{n-1}$ but of no member of
        $\mathcal{X}$ not in $\mathcal{B}^{n-1}$).
      By definition of $\mathcal{F}_j$, $\mathcal{O}^{n-1}_k$ and $ \mathcal{K}_k^{n-1}$ it holds:
        \[
          \mathcal{X}\setminus{\mathcal{B}}^{n-1}=\left(\cup_j\mathcal{F}_j\right)\setminus\left(\cap_k\mathcal{O}^{n-1}_k\right)=
          \bigcup_j\mathcal{F}_j\cap  \left(\cup_k\left(\mathcal{C}_1\setminus \mathcal{C}_1^{\mathcal{J}}\right)\cap
                \mathcal{K}_k^{n-1}\right)=\cup_{j,k}\mathcal{F}_j\cap \mathcal{K}_k^{n-1}
            \]
          where we used in the last equality that $\mathcal{F}_j\subset\mathcal{C}_1\setminus \mathcal{C}_1^{\mathcal{J}}$. Thus,  $\mathcal{X}\setminus{\mathcal{B}}^{n-1}$ is a countable union of nested compact sets, and we deduce that 
        $\mathfrak{e}(\mathcal{X}\setminus{\mathcal{B}}^{n-1})$ is in turn a countable union of compact sets in $\R^N$, by the continuity of $\mathfrak{e}$ asserted in by Lemma \ref{ouvertferme}. As
        $\cap_k\mathfrak{e}({\mathcal{O}}_k^{n-1})$
        is a countable intersection of nested open sets in
        $\R^N$ by the openness of $\mathfrak{e}$ also asserted in
        Lemma \ref{ouvertferme},
        we conclude that
$\cap_k\mathfrak{e}({\mathcal{O}}_k^{n-1})\setminus\mathfrak{e}(\mathcal{X}\setminus{\mathcal{B}}^{n-1})$ is a countable intersection of nested open sets.
Hence, by the continuity of $\mathfrak{e}$, the set $\mathcal{B}^{n}$ defined in \eqref{recB} is  a countable intersection of nested  relative open sets in $\mathcal{C}_1\setminus\mathcal{C}^{\mathcal{J}}_1$. This completes the induction step.

Let now $\mathcal{B}^F=\bigcup_n\mathcal{B}^n$ and define $\mathcal{E}^F$
analogously, only exchanging throughout $\mathfrak{b}$ and $\mathfrak{e}$.
	By what precedes,  $\mathcal{B}^F$ and $\mathcal{E}^F$ are Borel subsets of
        $\mathcal{C}_1\setminus\mathcal{C}^{\mathcal{J}}_1$.
		Since $\bb{u}_{\bs{\mu}}(\bs{\gamma}(t))=\bs{\gamma}'(t)$ for a.e. $t\in[0,1]$ whenever $\bs{\gamma}\in\mathcal{U}_1$ and $\bs{\pi}(\bs{\gamma})\in\mathcal{X}$, while  curves in $\bs{\pi}^{-1}(\mathcal{C}^{\mathcal{J}}_1\cap\widetilde{X})\cap\mathcal{U}_1$ can be continued indefinitely both backwards and forwards in that set as they are closed curves, we can extend any $\bs{\gamma}\in\mathcal{U}_1$
                  with $\bs{\pi}(\bs{\gamma})\in(\mathcal{X}\setminus(\mathcal{B}^F\cup\mathcal{E}^F))\cup (\widetilde{\mathcal{X}}\cap\mathcal{C}^{\mathcal{J}}_1)$ to a Lipschitz continuous function $\bs{f}:\R\to\supp\bs{\mu}$ such that  $\bs{f}'(t)=\bb{u}_{\bs{\mu}}(\bs{f}(t))$ and $\bs{f}(t)\in X$ for a.e.
                $t\in\R$. 
	Let us pick $\mathcal{Y}\subset(\mathcal{X}\setminus(\mathcal{B}^F\cup\mathcal{E}^F))\cup (\widetilde{\mathcal{X}}\cap\mathcal{C}^{\mathcal{J}}_1)$ to be a countable union of nested closed sets in $\mathcal{B}$ such that $\Xi(\mathcal{Y})=\Xi((\mathcal{X}\setminus(\mathcal{B}^F\cup\mathcal{E}^F))\cup (\widetilde{\mathcal{X}}\cap\mathcal{C}^{\mathcal{J}}_1))$; such a $\mathcal{Y}$ exists by inner regularity of Borel measures on metric spaces.
	Equip the product space $\mathrm{Lip}_1([0,1],\R^\N)\times[0,1]$ with the
        product of the uniform topology and the standard topology of $[0,1]$.
        	Then,  the function $\mathfrak{Ev}:\mathrm{Lip}_1([0,1],\R^\N)\times[0,1]$ defined by $\mathfrak{Ev}(\bs{\gamma},t):=\bs{\gamma}(t)$, is continuous.
	Also, by the continuity of $\bs{\pi}$ asserted in Lemma \ref{lemma|map_pi},
        $\bs{\pi}^{-1}(\mathcal{Y})$ is a countable union of closed sets in
        $\mathrm{Lip}_1([0,1],\R^\N)$ equipped with the uniform topology which, since closed balls of $\mathrm{Lip}_1([0,1],\R^\N)$ are compact for that topology by the Arzelà-Ascoli theorem, can evidently be rewritten as a countable union of compact sets.
	Thus, by Tikhonov's theorem, the subset $I:=\mathfrak{Ev}(\bs{\pi}^{-1}(\mathcal{Y})\times[0,1])$ of $\R^N$ is a countable union of compact sets and in particular a Borel set.
	Noticing that
	\[
	I\subset\bigcup_{\bs{\gamma}\in\bs{\pi}^{-1}\left((\mathcal{X}\setminus(\mathcal{B}^F\cup\mathcal{E}^F))\cup (\widetilde{\mathcal{X}}\cap\mathcal{C}^{\mathcal{J}}_1)\right)\cap\mathcal{U}_1}\bs{\gamma}([0,1])
	\]
	(the right hand side is a union of images of curves that can be continued indefinitely backwards and forwards by concatenation with
          curves in $\bs{\pi}^{-1}(\widetilde{\mathcal{X}})$
      ),
        it is enough
        to finish the proof that we establish the equality $|\bs{\mu}|(I)=\|\bs{\mu}\|_{TV}$, for this will imply that the initial condition $x_0$ in the statement can indeed be chosen over a set of full $|\bs{\mu}|$-measure in $X$.
	
	First, let us prove that $\Xi(\mathcal{B}^F)=0$.
	For this,
        observe from \eqref{eq|betaeta} that
	\begin{equation}
          \label{egebinduc}
	\beta(\R^\N\setminus\mathfrak{e}(\mathcal{X}))
	=\Xi(\mathfrak{b}^{-1}(\R^\N\setminus\mathfrak{e}(\mathcal{X})))
	=	\Xi(\mathcal{B}^0).
	\end{equation}
	Analogously, one has
	\begin{equation}
          \label{egebinduc1}
	\epsilon(\R^\N\setminus\mathfrak{e}(\mathcal{X}))
	=\Xi(\mathfrak{e}^{-1}(\R^\N\setminus\mathfrak{e}(\mathcal{X})))
	\leq\Xi(\left(\mathcal{C}_1\setminus\mathcal{C}^{\mathcal{J}}_1\right)\setminus\mathcal{X})=0
	\end{equation}
	where the last equality holds because
          $\mathcal{X}$ has full $\Xi$-measure in $\mathcal{C}_1\setminus\mathcal{C}^{\mathcal{J}}_1$. As  $\epsilon=\beta$, we conclude from \eqref{egebinduc} and \eqref{egebinduc1} that $\Xi(\mathcal{B}^0)=0$.
	Next, assume for some $n>0$ that $\Xi(\mathcal{B}^{n-1})=0$.
	Then,  using \eqref{recB}  together with \eqref{eq|betaeta} and the fact that $\beta=\epsilon$ again, we can write
	\begin{align*}
		\Xi(\mathcal{B}^n)
          &=
            \beta(\cap_k\mathfrak{e}({\mathcal{O}}_k^{n-1})\setminus\mathfrak{e}(\mathcal{X}\setminus{\mathcal{B}}^{n-1}))
            =\epsilon(\cap_k\mathfrak{e}({\mathcal{O}}_k^{n-1})\setminus\mathfrak{e}(\mathcal{X}\setminus{\mathcal{B}}^{n-1}))
          \\
          &=\Xi(\mathfrak{e}^{-1}(\cap_k\mathfrak{e}({\mathcal{O}}_k^{n-1})\setminus\mathfrak{e}(\mathcal{X}\setminus{\mathcal{B}}^{n-1})))
		\leq\Xi(\mathcal{B}^{n-1})=0,
	\end{align*}
        where the last inequality is obvious since we deprived $\cap_k\mathfrak{e}({\mathcal{O}}_k^{n-1})$ from $\mathfrak{e}(\mathcal{X}\setminus{\mathcal{B}}^{n-1})$ before taking the inverse image under $\mathfrak{e}$.
	Thus, we conclude by induction that $\Xi(\mathcal{B}^F)=0$.
	An analogous argument shows that $\Xi(\mathcal{E}^F)=0$.
	Recall that for all $\bs{\pi}(\bs{\gamma})$ with $\bs{\gamma}\in\mathcal{C}_1$ we have $\|\bs{\pi}(\bs{\gamma})\|_{TV}=1$, and also that $\|\bs{\mu}\|_{TV}=\Xi(\mathcal{X})=\Xi(\widetilde{\mathcal{X}})$.
	Hence,  applying the right equation of \eqref{eq|Smi1|old} to $I$, we
        obtain that
	\begin{align*}
		|\bs{\mu}|(I)
		&=\int_{\mathcal{C}_1}|\bs{\nu}|(I)\,d\Xi(\bs{\nu})
		\geq \int_{\mathcal{Y}}|\bs{\nu}|(I)\,d\Xi(\bs{\nu}) 
		=\int_{\mathcal{Y}}\|\bs{\nu}\|\,d\Xi(\bs{\nu})\\
		&=\Xi(\mathcal{Y})=\Xi((\mathcal{X}\setminus(\mathcal{B}^F\cup\mathcal{E}^F))\cup (\widetilde{\mathcal{X}}\cap\mathcal{C}^{\mathcal{J}}_1))=\Xi(\mathcal{X})+\Xi(\widetilde{\mathcal{X}}\cap\mathcal{C}^{\mathcal{J}}_1)\\
		&=\Xi(\widetilde{\mathcal{X}}\setminus\mathcal{C}^{\mathcal{J}}_1)+\Xi(\widetilde{\mathcal{X}}\cap\mathcal{C}^{\mathcal{J}}_1)=\Xi(\widetilde{\mathcal{X}})=\|\bs{\mu}\|_{TV},
	\end{align*}
        where  the last equality in the first line is because $I$ contains the support of every member of $\mathcal{Y}$ by its very definition, and
        the last equality in the second line  uses  the fact that $\mathcal{B}^F\cup\mathcal{E}^F$ is a Borel set of $\Xi$-measure zero.
        \end{proof}

\begin{exa}\label{exa|infinf}
  This illustrates that the function $\bs{f}$ in Theorem~\ref{prop|smi_tra}  may be highly non-unique.
  
    Let $a:\R^2\to\R$ be defined by $a(\bs{x})=1-|\bs{x}|$ and $R$ be the counter-clockwise $\pi/2$ rotation on $\R^2$.
    For any $\theta\in[0,2\pi]$, let $\bs{x}_{\theta}:=(\cos(\theta),\sin(\theta))^t$ and let $L_{\theta}$ be the line segment from the origin to $\bs{x}_{\theta}$ (without endpoints).
    For $0\leq\theta<\phi\leq2\pi$ let $C_{\theta,\phi}:=\bigcup_{\theta<\psi<\phi}L_{\psi}$.
    Define the measure $\bs{\mu}_{\theta,\phi}\in\mathcal{M}(\R^2)^2$ by:
    \begin{enumerate}
        \item $\supp{\bs{\mu}_{\theta,\phi}}=\overline{C_{\theta,\phi}}$.
        \item $|\bs{\mu}_{\theta,\phi}|=a\mathcal{L}_2$ on $C_{\theta,\phi}$.
        \item $|\bs{\mu}_{\theta,\phi}|=a\mathcal{H}^1$ on $L_{\theta}$ and $L_{\phi}$.
        \item \[
            \bb{u}_{\bs{\mu}_{\theta,\phi}}(\bs{x})=
             \begin{cases}
                \frac{R(\bs{x})}{|\bs{x}|}   &   \text{for } \bs{x}\in C_{\theta,\phi},\\
                \frac{\bs{x}}{|\bs{x}|}   &   \text{for } \bs{x}\in L_{\theta},\\
                -\frac{\bs{x}}{|\bs{x}|}   &   \text{for } \bs{x}\in L_{\phi}.
            \end{cases}   
        \]
    \end{enumerate}
    
    Alternatively, we may write $\bs{\mu}_{\theta,\phi}=\int_0^1 \bs{p}_r\, dr$ where $\bs{p}_r=\bs{\pi}(\gamma_r)$ and $\gamma_r$ denotes the arclength parametrized closed  loop traversing the boundary of $rC_{\theta,\phi}$ oriented in the counterclockwise direction.   As $\bs{p}_r$ is divergence free, it follows that   $\bs{\mu}_{\theta,\phi}$   is also divergence-free.   If $\bs{f}$ is as in Theorem~\ref{prop|smi_tra} for  $\bs{\mu}_{\theta,\phi}$, we note that $\bs{f}(t)\in L_\theta$  infinitely often and for any such $t$ we may choose to continue either along $L_\theta$ or along the circular arc from that point.
    Superposing countably many weighted phase portraits of this kind lead to examples with dense subsets of point of  non-uniqueness for $\bs{f}$, where
    switching to another trajectory is permitted.
\end{exa}


\section{Uniqueness of  the regularized solution  under conditions \ref{conU}}
\label{sec|Uni}

      
In this section, we will consider a linear operator $\Af:\mathcal{M}(S)^\N\to \HH$, where $\HH$ is a real Hilbert space, that is sequentially continuous in the weak-star topology on $\mathcal{M}(S)^\N$. By the Banach-Alaoglu Theorem, any $TV$-norm bounded sequence
in $\mathcal{M}(S)^\N$ contains a weak-star convergent subsequence and hence, the sequential weak-star continuity of $\Af$ implies that
$\Af$ is compact; i.e., $\Af$ maps TV-norm bounded sets into precompact sets in $\HH$. 

It is well-known that the compactness of $\Af$ makes the solving of $A\bs{\mu}=f$ an ill-posed problem when $f$ is given and $\bs{\mu}$ is the unknown.
Moreover, for our main motivation of magnetic source recovery
problems, $\Af$ is not even injective.  It is classical to use regularization methods to produce `good' approximate solutions to $\Af\bs{\mu}=f$, by adjoining a penalty functional to a  criterion which,  though natural,  fails to attain its minimum in general due to ill-posedness.
In \cite{BVHNS} and \cite{BVH},   we considered regularization using the TV-norm and showed this is consistent for recovering  a certain class of measures including sparse measures, e.g., those with finite support (see discussion below).
Specifically,  for $f\in  \HH$, $\lambda>0$, and $\sigma$ a positive Lipschitz weight function on $S$, recall from \eqref{eq|main}  the functional  
\begin{equation} 
	\label{eq|defcrit0}
	\mathcal{F}(\bs{\mu})=
	\|f-\Af\bs{\mu}\|_{\HH}^2+\lambda \|\sigma\bs{\mu}\|_{TV}.
\end{equation}

Since $\mathcal{F}(\bs{0})=\|f\|_{\HH}^2<\infty$ and there is a constant $C>0$ such that $1/C<\sigma<C$ on $S$, it follows that  for any $\bs{\mu}$ with $\|\bs{\mu}\|_{TV}>(C/\lambda )\|f\|_{\HH}^2$, we have
$$\mathcal{F}(\bs{\mu})\ge \lambda \|\sigma\bs{\mu}\|_{TV}\ge  (\lambda/C) \|\bs{\mu}\|_{TV}>\|f\|_{\HH}^2\,\geq \mathcal{F}(\bs{0}).$$
It then follows from the Banach–Alaoglu theorem  and our assumptions on $\sigma$  that there exists a minimizer $\widetilde{\bs{\mu}}$ of 
$\mathcal{F}$ over $\mathcal{M}(S)^\N$; i.e., 
\begin{equation} \label{eq|mulamfVreg}
	\mathcal{F}(\widetilde{\bs{\mu}})= \inf_{\bs{\mu}\in \mathcal{M}(S)^\N} \mathcal{F}(\bs{\mu}).
\end{equation}

The next lemma provides us with a variational characterization of
such minimizers
(compare \cite{BrePikk}). Note that $\sigma$ is merely assumed positive and continuous there.
 
\begin{lemma}\label{lemma|CPV}
  Let $S\subset\R^N$ be compact, $\HH$ be a real Hilbert space and  $\Af:\mathcal{M}(S)^\N\to \HH$  a linear sequentially continuous operator for the weak-star topology on $\mathcal{M}(S)^\N$.
  Let moreover $\sigma$ be a positive continuous weight function on $S$.  
 Then, for  $f\in \mathcal{H}$, a measure $\bs{\mu}\in \mathcal{M}(S)^\N$ is a minimizer of $\mathcal{F}$   	if and only if
\begin{equation}
	\label{eq|CP}
	\begin{array}{ll}
		 \Af^*(f-\Af\bs{\mu})&=\frac{\lambda}{2} \sigma\bb{u}_{\bs{\mu}} 
		\qquad |\bs{\mu}|\text{\rm -a.e. and}\\ \\
		\left | \Af^*(f-\Af\bs{\mu})\right|&\leq \frac{\lambda}{2}\sigma\quad\text{\rm everywhere on } S.
	\end{array}
\end{equation}
	Moreover, if $\bs{\mu}' \in\mathcal{M}(S)^\N$ is another minimizer of $\mathcal{F}$, then  
 $A(\bs{\mu}'-\bs{\mu})={0}$. \end{lemma}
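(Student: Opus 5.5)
The statement to prove is Lemma~\ref{lemma|CPV}. I will use the standard convex-analysis route, exploiting that $\mathcal{F}$ is the sum of a differentiable convex quadratic term and the convex, positively homogeneous functional $J(\bs{\mu}):=\lambda\|\sigma\bs{\mu}\|_{TV}$. Write $\bs{g}:=\Af^*(f-\Af\bs{\mu})$. Sequential weak-star continuity of $\Af$ makes $\Af^*$ map $\HH$ into $C(S)^N$ (the predual), so $\bs{g}$ is a continuous vector field on $S$. For $\bs{\nu}\in\mathcal{M}(S)^N$ and $t>0$, expanding the square gives
\[
\mathcal{F}(\bs{\mu}+t\bs{\nu})-\mathcal{F}(\bs{\mu})=-2t\langle \bs{g},\bs{\nu}\rangle+t^2\|\Af\bs{\nu}\|_{\HH}^2+J(\bs{\mu}+t\bs{\nu})-J(\bs{\mu}).
\]
By convexity, $\bs{\mu}$ is a minimizer if and only if $\bs{\mu}$ minimizes the linearized functional $\bs{\nu}\mapsto -2\langle\bs{g},\bs{\nu}\rangle+J(\bs{\nu})$. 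Equivalently, $2\langle\bs{g},\bs{\nu}-\bs{\mu}\rangle\le J(\bs{\nu})-J(\bs{\mu})$ for all $\bs{\nu}$. Necessity follows by dividing by $t$ and letting $t\to0^+$. Sufficiency follows because the quadratic term is convex with derivative $-2\langle \bs{g},\cdot\rangle$ at $\bs{\mu}$.

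Since $J$ is positively homogeneous, this subgradient inequality splits into two conditions. The first is $2\langle\bs{g},\bs{\nu}\rangle\le J(\bs{\nu})$ for every $\bs{\nu}$, obtained by taking $\bs{\nu}=s\bs{\nu}'$ with $s\to\infty$. The second is $2\langle\bs{g},\bs{\mu}\rangle=J(\bs{\mu})$, obtained by taking $\bs{\nu}=0$ and $\bs{\nu}=2\bs{\mu}$.

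Testing the first condition with $\bs{\nu}=\bs{v}\delta_{\bs{x}}$, for $\bs{x}\in S$ and $\bs{v}$ a unit vector, gives $2\bs{g}(\bs{x})\cdot\bs{v}\le\lambda\sigma(\bs{x})$. Hence $|\bs{g}|\le\frac{\lambda}{2}\sigma$ everywhere on $S$. Conversely, this pointwise bound gives $2\langle \bs{g},\bs{\nu}\rangle=2\int \bs{g}\cdot\bb{u}_{\bs{\nu}}\,d|\bs{\nu}|\le\lambda\int\sigma\, d|\bs{\nu}|=J(\bs{\nu})$. Here I use $\|\sigma\bs{\nu}\|_{TV}=\int\sigma\,d|\bs{\nu}|$, which holds since $\sigma>0$.

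For the second condition, write $2\langle\bs{g},\bs{\mu}\rangle=\int 2\bs{g}\cdot\bb{u}_{\bs{\mu}}\,d|\bs{\mu}|$. The integrand satisfies $2\bs{g}\cdot\bb{u}_{\bs{\mu}}\le\lambda\sigma$ pointwise, so equality of the integrals with $\int\lambda\sigma\,d|\bs{\mu}|$ forces $2\bs{g}\cdot\bb{u}_{\bs{\mu}}=\lambda\sigma$ $|\bs{\mu}|$-a.e. Because $|\bb{u}_{\bs{\mu}}|=1$ and $|2\bs{g}|\le\lambda\sigma$, equality in Cauchy--Schwarz gives $2\bs{g}=\lambda\sigma\bb{u}_{\bs{\mu}}$ $|\bs{\mu}|$-a.e. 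The converse direction is immediate by integrating. This yields \eqref{eq|CP}, and it is the only step requiring some care: I must make sure the everywhere bound and the a.e. identity are correctly linked via the polar decomposition.

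For the last claim, suppose $\bs{\mu}$ and $\bs{\mu}'$ are both minimizers. By convexity, $\mathcal{F}$ is constant on the segment joining them. The TV part is convex and the map $t\mapsto\|f-\Af(\bs{\mu}+t(\bs{\mu}'-\bs{\mu}))\|^2$ is convex, so each must be affine on $[0,1]$. The squared norm has second derivative $2\|\Af(\bs{\mu}'-\bs{\mu})\|^2$, so affineness forces $\Af(\bs{\mu}'-\bs{\mu})=0$. More precisely, strict convexity of $\|\cdot\|^2$ on $\HH$ gives $\Af\bs{\mu}=\Af\bs{\mu}'$ whenever the midpoint value does not drop.
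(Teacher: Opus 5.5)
Your proof is correct, but it takes a different route from the paper.

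\emph{The paper's route.} It uses abstract subdifferential calculus. It writes $0\in\partial\mathcal{F}(\bs{\mu})=\{\nabla\mathcal{R}\}+\lambda(M_\sigma)^*\partial\|\cdot\|_{TV}(\sigma\bs{\mu})$ via the sum and chain rules of Ekeland--Turnbull. It then describes $\partial\|\cdot\|_{TV}$ as the norming functionals in the large dual $(\mathcal{M}(S)^N)^*$. Finally it uses that $\nabla\mathcal{R}$ is weak-star continuous, i.e.\ given by a continuous function, to read off \eqref{eq|CP}. The last claim is attributed to strict convexity of the Hilbert norm.

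\emph{Your route.} You derive $2\langle\bs{g},\bs{\nu}-\bs{\mu}\rangle\le J(\bs{\nu})-J(\bs{\mu})$ by hand from the expansion of the square. For necessity, you apply it in the direction $\bs{\nu}-\bs{\mu}$ and use $J(\bs{\mu}+t(\bs{\nu}-\bs{\mu}))-J(\bs{\mu})\le t(J(\bs{\nu})-J(\bs{\mu}))$ for $t\in(0,1]$. You split the inequality by positive homogeneity, get the everywhere bound by testing against $\bs{v}\delta_{\bs{x}}$, and get the a.e.\ identity from the equality case of Cauchy--Schwarz.

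\emph{What each buys.} Your argument is self-contained, never touches $(\mathcal{M}(S)^N)^*$, and handles the weight transparently through $\|\sigma\bs{\nu}\|_{TV}=\int\sigma\,d|\bs{\nu}|$. The paper's is shorter given the cited machinery.

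\emph{A shortcut you could use.} Your sufficiency computation shows that at any minimizer $\bs{\mu}$ one has $\mathcal{F}(\bs{\nu})-\mathcal{F}(\bs{\mu})\ge\|\Af(\bs{\nu}-\bs{\mu})\|_{\HH}^2$. Taking $\bs{\nu}=\bs{\mu}'$ gives the final assertion immediately, so the segment argument is not needed.

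\emph{One point to justify.} You assert, as the paper also does, that $\Af^*$ takes values in $C(S,\R^N)$; this deserves a line. For $h\in\HH$, the functional $\bs{\nu}\mapsto\langle\Af\bs{\nu},h\rangle_{\HH}$ is sequentially weak-star continuous. Since $C(S,\R^N)$ is separable, bounded sets of $\mathcal{M}(S)^N$ are weak-star metrizable, so the functional is weak-star continuous on them. By the Krein--Smulian theorem its kernel is weak-star closed, so it is weak-star continuous and hence represented by an element of $C(S,\R^N)$.
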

\begin{proof}
  For $\mathcal{G}$ a convex functional  on ${\mathcal{M}(S)^\N}$ and  $\bs{\mu}\in {\mathcal{M}(S)^\N}$, we let $\partial\mathcal{G}(\bs{\mu})$ denote the subdifferential of $\mathcal{G}$ at $\bs{\mu}\in {\mathcal{M}(S)^\N}$;
  that is, denoting by $(\mathcal{M}(S)^\N)^*$ the dual of $\mathcal{M}(S)^\N$ endowed with the strong topology, one has:
	\[
		\partial\mathcal{G}(\bs{\mu}):=\left\{\mathcal{N}\in {(\mathcal{M}(S)^\N)^*}\ :\ \mathcal{G}(\bs{\nu})-\mathcal{G}(\bs{\mu})\geq \langle\mathcal{N},\bs{\nu}-\bs{\mu}\rangle \text{, for any } \bs{\nu}\in {\mathcal{M}(S)^\N}\right\}.
	\]
	Let $\mathcal{R}$ be the convex functional on ${\mathcal{M}(S)^\N}$ sending 
        $\bs{\mu}$ to $ \|f-A\bs{\mu}\|_{\HH}^2$. Since $A$ is a fortiori strongly continuous, we
        note that $\mathcal{R}$ has a Fr\'echet derivative $\grad\mathcal{R}$ such that, for any $\bs{\nu}\in {\mathcal{M}(S)^\N}$, it holds  $\langle \grad\mathcal{R},\bs{\nu}\rangle= \langle\bs{\nu},2A^*(A\bs{\mu}-f)\rangle$.
Let $\|\cdot\|_{TV}^*$ indicate the norm in $(\mathcal{M}(S)^\N)^*$ and observe that
	$$\partial(\|\cdot\|_{{TV}})(\bs{\mu})=
	\left\{\begin{array}{lr}
		\left\{\mathcal{N}\in {(\mathcal{M}(S)^\N)^*}\ :\ \|\mathcal{N}\|{_{TV}^*}=1 \text{ and } \langle\mathcal{N},\bs{\mu}\rangle=\|\bs{\mu}\|_{TV}\right\} & \text{if } \bs{\mu}\neq\bs{0} \\
		\left\{\mathcal{N}\in {(\mathcal{M}(S)^\N)^*}\ :\ \|\mathcal{N}\|{_{TV}^*}\leq1\right\} & \text{if } \bs{\mu}=\bs{0}
	\end{array}\right..$$
      In particular, the weak-star continuous functionals in $\partial(\|\cdot\|_{{TV}})(\bs{\mu})$, if any,  are of the form $\bs{\nu}\mapsto\int\bs{f}\cdot d\bs{\nu}$ where $\bs{f}$ is  a function
      of unit norm  in $C(S,\R^N)$ equal to $\bb{u}_{\bs{\mu}}$ at $\bs{\mu}$-a.e. point if $\bs{\mu}\neq\bs{0}$, and any function with norm at most $1$
      in $C(S,\R^N)$ in case $\bs{\mu}=\bs{0}$.
      
	Now, by classical properties of subdifferentials (e.g., see \cite[Chapter III, Propositions 9 \& 12 and Corollary 1]{ekeland_turnbull_1983}), {we obtain upon denoting multiplication by $\sigma$ with $M_\sigma$ that}
	$$
	\partial\mathcal{F}(\bs{\mu}) = \{\grad\mathcal{R}\} + \lambda{(M_\sigma)^*}\left(\partial(\|\cdot\|_{{\mathcal{M}(S)^\N}})({\sigma}\bs{\mu})\right).
	$$
	The first summand on the right hand side consists of the singleton 
        $\nabla\mathcal{R}$; that is, the member of $(\mathcal{M}(S)^N)^*$
        acting on $\bs{\nu}\in\mathcal{M}(S)^N$ by integrating  the \emph{function}  $\nabla\mathcal{R}\cdot{\mathbf u}_{\bs{\nu}}$ against $|\bs{\nu}|$.
        Thus, as $\bs{\mu}$ being a minimizer of $\mathcal{F}$ over ${\mathcal{M}(S)^\N}$ is equivalent to say that $\bs{0}\in \partial\mathcal{F}(\bs{\mu})$, this will occur if and only if
        \begin{equation}
          \label{minsubdiff}
          -\frac{1}{\lambda}\grad\mathcal{R}\in (M_\sigma)^*({\partial(\|\cdot\|_{{TV}})({\sigma}\bs{\mu})}).
          \end{equation}
          Clearly,  since $\sigma>0$, it holds when $\mathcal{N}\in (\mathcal{M}(S)^N)^*$ that $(M_\sigma)^*\mathcal{N}$ is a weak star continuous functional given by integration against a function $\varphi\in C(S,\R^N)$, if and only if $\mathcal{N}$ is the weak-star continuous functional
           given by integration against the function $\varphi/\sigma\in C(S,\R^N)$. 
    Moreover,  $\bs u_{\sigma\bs\mu}=\bs u_{\bs\mu}$ holds $|\bs{\mu}|$-a.e. by the definition of $\sigma\bs{\mu}$ given after Equation \eqref{eq|main}, and
    thus the weak-star  functionals in ${\partial(\|\cdot\|_{TV})({\sigma}\bs{\mu})}$ are the same as in ${\partial(\|\cdot\|_{{TV}})(\bs{\mu})}$. So,
    \eqref{minsubdiff} is equivalent to say that
    $\frac{1}{\lambda\sigma}\grad\mathcal{R}$, which  is a weak-star
    continuous member of $(\mathcal{M}(S)^N)^*$, actually lies in 
    $\partial(\|\cdot\|_{TV})(\bs{\mu})$.
                      Consequently, $\bs{\mu}$ is a minimizer of $\mathcal{F}_{\lambda, f}$ over ${\mathcal{M}(S)^\N}$ if and only \eqref{eq|CP} is satisfied.
	
	{Finally, the last statement of the lemma follows from the convexity of the $TV$-norm and the strict convexity of the norm in $\mathcal{H}$.}
\end{proof}

We will also need the following  consequence of classical existence  and uniqueness results on solutions to the Cauchy
problem (Carath\'{e}odory's Theorem).       
 
 \begin{lemma}\label{lemma|CP}
Suppose $\Phi:\R^N\to \R^N$ is Lipschitz and,
    for $x_0\in \R^N$, let $w(t)=w(t,x_0)$ denote the unique  absolutely continuous solution of the autonomous Cauchy problem
    \begin{equation} \dot{w}(t)=H(w(t)), \qquad t\in\R, \quad  w(0)=x_0,\label{wEqn}\end{equation}
    where a dot above a quantity indicates its derivative with respect to $t$.
    Let  $s:\R\to \R$ be a bounded measurable function 
    and put
    \begin{equation}\label{betaDef}\beta(t):= \int_0^t s(\tau)d\tau. \end{equation} Then $z(t):=w(\beta(t),x_0)$ is the unique absolutely continuous solution of 
\begin{equation}\dot{z}(t)=s(t)H(z(t)), \qquad {\rm a.e.}\,   t\in \R, \quad  z(0)=x_0.\label{xEqn}\end{equation}   
 \end{lemma}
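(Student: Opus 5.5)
The plan is to verify directly that $z:=w(\beta(\cdot),x_0)$ solves \eqref{xEqn}, and then to obtain uniqueness from a Gronwall argument. Throughout, $H$ denotes the Lipschitz field $\Phi$ of the hypothesis, with Lipschitz constant $L$.

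\emph{Existence and regularity of $w$.} Since $H$ is globally Lipschitz, it has at most linear growth. By the Cauchy--Lipschitz theorem, \eqref{wEqn} therefore has a unique solution $w(\cdot,x_0)$ defined on all of $\R$, with no blow-up. As $\dot w=H(w)$ is continuous, $w(\cdot,x_0)$ is in fact $C^1$. On any bounded time interval $w$ stays in a compact set, where $|H|$ is bounded, so $w$ is Lipschitz there.

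\emph{The reparametrized curve.} The function $\beta$ of \eqref{betaDef} is Lipschitz with constant $\|s\|_\infty$. By the Lebesgue differentiation theorem, $\dot\beta(t)=s(t)$ for a.e. $t$. Fix a bounded interval $[-T,T]$. Then $\beta([-T,T])\subset[-\|s\|_\infty T,\|s\|_\infty T]$, and $w$ is Lipschitz on the latter interval. Hence $z=w\circ\beta$ is Lipschitz on $[-T,T]$, and in particular absolutely continuous. At every $t$ where $\beta$ is differentiable with $\dot\beta(t)=s(t)$, the classical chain rule applies because $w$ is $C^1$. It gives
\[
\dot z(t)=\dot w(\beta(t))\,\dot\beta(t)=s(t)\,H(w(\beta(t)))=s(t)\,H(z(t)).
\]
Together with $z(0)=w(0,x_0)=x_0$, this shows that $z$ solves \eqref{xEqn}. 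The only point needing care is this chain rule step: composing an absolutely continuous inner function with a $C^1$ outer function poses no difficulty, since differentiability of the composition is checked pointwise at differentiability points of $\beta$.

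\emph{Uniqueness.} The right-hand side $F(t,z):=s(t)H(z)$ is measurable in $t$ and Lipschitz in $z$ with constant $\|s\|_\infty L$, uniformly in $t$. Let $z_1,z_2$ be two absolutely continuous solutions of \eqref{xEqn}. Integrating their difference gives, for $t\ge0$,
\[
|z_1(t)-z_2(t)|\le \|s\|_\infty L\int_0^t|z_1(\tau)-z_2(\tau)|\,d\tau,
\]
and the analogous bound holds for $t\le0$. Gronwall's inequality then yields $z_1\equiv z_2$, which is the Carath\'eodory uniqueness statement.
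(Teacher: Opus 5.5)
Your proposal is correct and takes the same route as the paper. The paper also uses the chain rule to check that $z(t)=w(\beta(t),x_0)$ solves \eqref{xEqn}, and gets uniqueness by citing Carath\'eodory-type existence and uniqueness theorems for $\dot z=s(t)H(z)$. Your Gronwall argument proves that cited uniqueness step directly, and applying the chain rule only at points where $\dot\beta(t)=s(t)$, with $w$ of class $C^1$, supplies the detail the paper's ``by inspection'' leaves implicit.
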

\begin{proof}
  It follows from    \cite[Thms 5.1 and 5.2]{HaleODE} that there exists a unique
  absolutely continuous solution  of \eqref{xEqn}.  Using the chain rule, one sees by inspection that $z(t):=w(\beta(t),x_0)$ is this solution.    
\end{proof}

We are now in position to prove our main result Theorem~\ref{thm|uniqueness}.

\begin{proof}[Proof of Theorem~\ref{thm|uniqueness}.]
	Assume for a contradiction that $\widetilde{\bs{\mu}}$ and $\check{\bs{\mu}}$ 	are two distinct minimizers of $\mathcal{F}$ and let $\bs{\mu}:=\check{\bs{\mu}}-\widetilde{\bs{\mu}}\neq{\bs 0}$. 
	As $\bs{\mu}$ is absolutely continuous with respect to $|\bs{\mu}|$, the  Radon-Nikodym decompositions of $\widetilde{\bs{\mu}}$ and $\check{\bs{\mu}}$ with respect to $|\bs{\mu}|$ must have the same singular term.
	That is, these decompositions are of the form
	\begin{equation}
          \label{decRNm}
		d\widetilde{\bs{\mu}}=\tilde{\bs{\phi}} d|\bs{\mu}|+d\bs{\nu},\qquad
		d\check{\bs{\mu}}=\check{\bs{\phi}} d|\bs{\mu}|+d\bs{\nu},
	\end{equation}
	where $|\bs{\nu}|$ is singular with respect to $|\bs{\mu}|$ and $\tilde{\bs{\phi}}$, $\check{\bs{\phi}}$ are $|\bs{\mu}|$-integrable $\R^\N$-valued functions, so that $\bs u_{\bs{\mu}}=\check{\bs{\phi}}- \tilde{\bs{\phi}}$   holds $|\bs{\mu}|$-a.e.
			
	The last assertion of Lemma \ref{lemma|CPV} implies  $\Af\bs{\mu}=0$ and so,
defining $\psi:=(2/\lambda)(f-\Af(\widetilde{\bs{\mu}}))$,  we also have $\psi=
	(2/\lambda)(f-\Af(\check{\bs{\mu}}))$.
	Thanks to \eqref{eq|CP}, we know the equalities $\bs u_{\widetilde{\bs{\mu}}}= {\frac{1}{\sigma}}\Af^*\psi$ and $\bs u_{\check{\bs{\mu}}}={\frac{1}{\sigma}}\Af^*\psi$ hold $\widetilde{\bs{\mu}}$-a.e. and $\check{\bs{\mu}}$-a.e. respectively.
	Now, since $d|\widetilde{\bs{\mu}}|=|\tilde{\bs{\phi}}| d|\bs{\mu}| + d|\bs{\nu}|$ and $d|\check{\bs{\mu}}|=|\check{\bs{\phi}}| d|\bs{\mu}| + d|\bs{\nu}|$, we have that
	\begin{align*}
		\bs u_{\bs{\mu}} d|\bs{\mu}|= d\bs{\mu} = \bs u_{\check{\bs{\mu}}}d|\check{\bs{\mu}}|-\bs u_{\widetilde{\bs{\mu}}}d|\widetilde{\bs{\mu}}| = {\frac{1}{\sigma}}\Af^*\psi d|\check{\bs{\mu}}| - {\frac{1}{\sigma}}\Af^*\psi d|\widetilde{\bs{\mu}}| = {\frac{1}{\sigma}}\Af^*\psi \left( |\check{\bs{\phi}}|-|\tilde{\bs{\phi}}| \right) d|\bs{\mu}|,
	\end{align*}
by \eqref{decRNm}, and so $\bs u_{\bs{\mu}} = {\frac{1}{\sigma}}\Af^*\psi (|\check{\bs{\phi}}|-|\tilde{\bs{\phi}}|)$ at $|\bs\mu|$-a.e. point.   Since \[
		\left||\check{\bs{\phi}}|- |\tilde{\bs{\phi}}|\right|\le   |\check{\bs{\phi}}- \tilde{\bs{\phi}}|=|\bs u_{\bs{\mu}}|=1,\qquad |\bs\mu| -{\rm a.e.},
	\] and
$\left|{\frac{1}{\sigma}}\Af^*\psi\right|\leq1$ everywhere by \eqref{eq|CP},
    it follows  that $\left||{\check{\bs{\phi}}}|- |\tilde{\bs{\phi}}|\right|=1$ holds $|\bs\mu|$-a.e.  Hence, there is a $|\bs\mu|$-measurable function $x\mapsto\xi_x:=\left||{\check{\bs{\phi}}}|- |\tilde{\bs{\phi}}|\right|(x)$ 
    on $\supp|\bs{\mu}|$, valued in $\{-1,1\}$, such that $\bs u_{\bs{\mu}}(x) = \xi_x \frac{\Af^*\psi(x)}{{\sigma(x)}}$ for $|\bs{\mu}|$-a.e. $x$.
	Let us define the set of points where this equality holds:	 
	\[
		X:=\left\{x\in \supp|\bs{\mu}| : \bs u_{\bs{\mu}}(x) = \xi_x \frac{\Af^*\psi(x)}{{\sigma(x)}}\right\}.
	\] 
	Being the inverse image of $\{0\}$ under a $|\bs{\mu}|$-measurable map,
        the set $X$ is $|\bs{\mu}|$ measurable, it is nonempty since $\bs{\mu}\neq 0$ and
        it has full $|\bs{\mu}|$ measure.  
	
	Since  $\Af\bs{\mu}=0$, the vector-measure $\bs{\mu}$ is divergence free by assumption.  
	Hence, by Theorem~\ref{prop|smi_tra},  there exists a Lipschitz function $\bs{f}:\R\to\R^\N$ such that $\bs{f}(t)\in X$ and $\bs{f}'(t)=\bb{u}_{\bs{\mu}}(\bs{f}(t))$ for a.e. $t\in\R$; the theorem even implies that we can choose a $|\bs{\mu}|$-almost arbitrary initial point, but this is immaterial to us here.
	Since $\supp|\bs{\mu}|$ is compact and contains $X$, it follows by continuity of $\bs{f}$ that $\bs{f}(\R)\subset\supp\bs{\mu}$.
    Clearly then, $\bs{f}(\R)$ is contained  in a single rectifiably connected
    component of $S$, say $S_0$.
	Using the definition of $X$ and the relation 
 $\bs{f}'(t)=\bb{u}_{\bs{\mu}}(\bs{f}(t))$ 
yields
 \begin{equation*}
   \bs{f}'(t)=\xi_{\bs{f}(t)} \frac{\Af^*\psi(\bs{f}(t))}{{\sigma(\bs{f}(t))}},\qquad \mathrm{a.e.}\  t\in\R.
 \end{equation*}
 
	Let $s:\R\to\{\pm 1\}$ be such that $s(t) =\xi_{\bs{f}(t)}$ for almost every $t\in \R$.  Applying  Lemma~\ref{lemma|CP} with this $s$ and $H={\frac{1}{\sigma}}\Af^*\psi$ while observing this $H$ is Lipschitz continuous by the assumptions on $\sigma$ and $\Af$, we deduce that
 	$\bs{f}(t)=w(\beta(t))$ where $w$ is the solution of the autonomous problem \eqref{wEqn} with $w(0)=\bs{f}(0)$ and $\beta$ is defined in \eqref{betaDef}.     Suppose, for a contradiction, that $\beta$ fails to be monotonic; i.e., suppose there exist $t_1<t_2<t_3$ such that $\beta(t_1)=\beta(t_3)\neq \beta(t_2)$.   Since $\beta$ is Lipschitz and $\beta'=s$ a.e. on $\R$, it follows that the set of `regular values' $J:=\{y\in \beta([t_1,t_3]): \beta'(t)=s(t), \forall t\in \beta^{-1}(y)\cap [t_1,t_3]\}$   is of full measure\footnote{
 		This follows from a  Sard-type theorem for Lipschitz
	   	functions valid in any dimension \cite[Theorem 7.6]{Mattila} but also, more trivially, from 
  		the fact that $\beta^\prime\neq0$ a.e. and  Lipschitz functions $\R^1\to \R^1$
   		map sets of measure zero to sets of measure zero.
   	}     in the interval $\beta([t_1,t_3])$.
 	
	For $y\in J$ there are points 
	$t_y^\pm \in \beta^{-1}(y)\cap [t_1,t_3]$  with $s(t_y^\pm)=\pm 1$    (e.g., we can choose $t_y^\pm$  to be the max and min points in $\beta^{-1}(y)\cap [t_1,t_3]$) leading to the absurdity  $-1=s(t_y^{-})=\xi_{w(y)}=s(t_y^{+})=1$. 
	Hence, $\beta$ is strictly monotonic with either $s\equiv 1$ or $s\equiv -1$ for a.e. $t$ and so, as item (3) of Conditions \ref{conU}
        provides us with a $C^1$-map $g:\R^\N\to\R$ such that $\grad g=\Af^*\psi$ on $S_0$, we have  that
	\begin{equation}\label{eq|der_f}
		\text{either} \quad \bs{f}'
			=\left(\frac{\Af^*\psi}{{\sigma}}\right)\circ\bs{f} 
			=\left(\frac{\nabla g}{{\sigma}}\right)\circ\bs{f} 
		\quad \ \text{\ or\ }\  \quad \bs{f}'
			=-\left(\frac{\Af^*\psi}{{\sigma}}\right)\circ\bs{f}
			=-\left(\frac{\nabla g}{{\sigma}}\right)\circ\bs{f}.
	\end{equation}
 
	Taking the scalar product of \eqref{eq|der_f} with $\bs{f}'$ leads to
	\begin{equation}\label{eq|der_fc2}
		\text{either} \quad {(\sigma\circ\bs{f})}|\bs{f}'|^2=\nabla (g\circ\bs{f})\cdot \bs{f}' = \frac{d}{dt}(g\circ\bs{f})
		\quad \text{or} \quad  {(\sigma\circ\bs{f})}|\bs{f}'|^2=-\nabla (g\circ\bs{f})\cdot \bs{f}' = -\frac{d}{dt}(g\circ\bs{f}).
    \end{equation}

    Hence, with  $\pm$ fixed either to $+$ or to $-$ 
     we have for any   $t_1,t_2\in\R$ that
	\begin{align*}
		(g\circ\bs{f})(t_2)-(g\circ\bs{f})(t_1) 
		=\int_{t_1}^{t_2}\nabla g(\bs{f}(t))\cdot\bs{f}'(t)dt
		=\pm\int_{t_1}^{t_2}{\sigma(\bs{f}(t))}|\bs{f}'(t)|^2dt.
        \end{align*}
	By assumption ${\sigma(\bs{f}(t))}{|\bs{f}'(t)|^2}=\sigma(\bs{f}(t))>c $
	for some positive constant $c$ and a.e. $t$.
	Thus, it follows from the equation displayed above that
	$$
		|(g\circ\bs{f})(t_2)-(g\circ\bs{f})(t_1)| \ge c|t_2-t_1|,
	$$	
	but this is impossible since $g$ is continuous and hence bounded on  $\overline{\bs{f}(\R)}\subset\supp\,\bs{\mu}\subset S$, which is compact.
\end{proof}

\section{Applications to elliptic equations and integral operators}
\label{CondEUsec}
In this section, we specify assumptions on
Equation \eqref{genElliptic} and the geometry of $S$,
$Q$ guaranteeing that the forward operator
  $\Af(\bs{\mu}):=L(\Phi(\bs{\mu}))$ will meet Conditions \ref{conU},
  when $\Phi(\bs{\mu})$ is the solution of \eqref{genElliptic}
  defined in Theorem \ref{soldim1} and $L$  the
  measurement operator restricting $\Phi$ (or $\nabla\Phi$) to $Q$.
  We address these issues 
   {\it via}  Conditions \ref{conE} below and scenarios
  (S.1), (S.2) further down, that will
  eventually be shown to imply Conditions \ref{conU} using results
  on elliptic PDE  derived in
  Appendices A and B. This  provides us with a framework in which 
  inverse problems with source in divergence form
  associated to more general elliptic operators than
  the Laplacian continue to  have a unique approximate solution under $TV$-regularization, thanks to
  Theorem~\ref{thm|uniqueness}.
  Below, we employ standard  notation and terminology
  regarding Sobolev
  or Hölder spaces and the like, for which we
  refer the reader to Appendix~\ref{A}, Section \ref{prelSobetc}. We 
let  $\mathcal{Z}_{\mathcal{L}}$ denote the subspace of
  $W^{1,2}_{loc}(\R^N\setminus S)$ comprised of weak solutions,
  in the usual sense of \cite[Chapter 8]{GiTru}, to the equation
  $\mathcal{L}(u)=0$ on $\R^N\setminus S$; the space $\mathcal{Z}_{\mathcal{L}}$ is closed in $W^{1,2}_{loc}(\R^N\setminus S)$ and thus a
  Fréchet space in its own right. Theorem \ref{soldim1} shows that the restriction of $\Phi(\bs{\mu})$ to $\R^N\setminus S$ indeed belongs to
  $\mathcal{Z}_L$. Hereafter, we often abuse notation by dropping the restriction symbol ``$|_{\R^N\setminus S}$'' and consider $\Phi(\bs{\mu})$ itself as an element of $\mathcal{Z}_L$; this should cause no confusion as
  $\Phi(\bs{\mu})$ is entirely characterized by its restriction to $\R^N\setminus S$ because, by Theorem \ref{soldim1}, it is a locally integrable function
  (not a distribution of higher order) while  our assumptions
  on $S$ will imply it has  measure zero.
     

\begin{con}{E}\label{conE}
	\color{white}{a}\color{black}
  \begin{itemize}
  \item[(1)] $S\subset \R^N$ is compact and slender.
    \item[(2)] In \eqref{genElliptic} the map $\bs{\Sigma}:\R^N\mapsto\R^N\times\R^N$,
      valued in the positive definite matrices, is uniformly elliptic
      with Dini continuous
    derivatives in a neighborhood $V$ of $S$. 
  \item[(3)]    
    There is a separable 
      Hilbert space $\mathcal{H}$ and a  measurement operator  $L:\mathcal{Z}_{\mathcal{L}}\to \mathcal{H}$ that is continuous, linear and injective, such that the forward operator
      $A:\mathcal{M}(S)^N\to \mathcal{H}$
                 is given by $A(\bs{\mu}):=L(\Phi(\bs{\mu}))$ (with the abuse of notation mentioned above).
  \end{itemize}        
\end{con}

To put Conditions \ref{conE} into perspective,
we shall recast them  within
a more general class of operators satisfying Conditions \ref{conU}, described in Conditions \ref{conK} below  (see Lemma~\ref{LemKU}). This will connect our framework with
classical ``off-the-grid'' approaches to inverse problems
\cite{Poon2018TheGO,LaBlAu}, and stress how the weaker
regularity and the failure of the so-called \emph{source condition} in our case
get compensated, in part, by the structure of our forward operator (it has a kernel which is a Lipschitz gradient).
To state Conditions \ref{conK}, recall from Section \ref{preln} the space $\mathcal{Q}(S)$ of quasi-gradients on $S$ which is the uniform closure
of restrictions to $S$ of the gradient of $C^1$-functions in a neighborhood of $S$.
Clearly $\bs{\mu}\in\mathcal{M}(S)^N$ is such that $\langle \bs{\mu},\bs{\psi}\rangle=0$ for  $\bs{\psi}\in \mathcal{Q}(S)$ if and only if
$\bs{\mu}$ is divergence free.

\begin{con}{K}\label{conK}
	\color{white}{a}\color{black}
	\begin{enumerate}
		\item  $\mathcal{H}$ is a separable Hilbert space, and  $S\subset V\subset\R^\N$ with $S$ compact and $V$ open.
		\item ${G}:V\to \mathcal{H}$ has  Lipschitz continuous
                  gradient\footnote{The derivatives of $\nabla G$ may be understood
                    either in  a weak, vector-valued distributional sense
                    \cite[Chapter 1, \S 5 no. 1] {SchwartzTOD} or in a
                    strong Fréchet sense, because Rademacher theorem extends to Lipschitz functions on $\R^N$ valued in a separable Hilbert space \cite[Chapter II, Theorem 1 \& Chapter III, Theorem 2]{Aronszajn-LH}.} $\nabla{G}$ on $V$.
                \item Defining for every ${h}\in\mathcal{H}$ a
                  function   $g_{{h}}:V\to\R$ by
                  $g_{{h}}(y):=\langle {G}(y),{h}\rangle_{\mathcal{H}}$
                  so that
                  $\partial_{y_j}g_{{h}}(y)=\langle \partial_{y_j}{G}(y),{h}\rangle_{\mathcal{H}}$ for $ 1\leq j\leq N$, we
                  assume that the mapping ${h}\to \nabla g_{{h}}|_S$ has dense range in 	$\mathcal{G}(S)$.
                \item 
                  The operator  $\Af:\mathcal{M}(S)^\N\to  \mathcal{H}$
                  is the integral operator with kernel $- \nabla{G}|_S$, namely:
\begin{equation}
  \label{defA}
  \Af(\bs{\mu}):=
  -\int \nabla{G}(y)\cdot d\bs{\mu}(y)
                \qquad\bs{\mu}\in\mathcal{M}(S)^\N,
		\end{equation}
		where integrals  are taken in the sense of Bochner (see for example \cite[{Chapter~II}]{diestel1977vector}).
              
	\end{enumerate} 
\end{con}



\begin{lemma}\label{LemKU}
	Conditions~\ref{conK} imply Conditions~\ref{conU} for $A$ as in \eqref{defA}.
\end{lemma}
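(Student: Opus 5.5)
We verify the three items of Conditions~\ref{conU} in turn, taking $S$ compact as in Conditions~\ref{conK}(1). The key tool is the adjoint formula. For $h\in\HH$ and $\bs{\mu}\in\mathcal{M}(S)^N$, the Bochner integral commutes with the bounded functional $\langle\cdot,h\rangle_{\HH}$. Hence
\[
\langle A\bs{\mu},h\rangle_{\HH}=-\int \langle\nabla G(y),h\rangle_{\HH}\cdot d\bs{\mu}(y)=-\int \nabla g_h\cdot d\bs{\mu},
\]
so that $A^*h=-\nabla g_h|_S$.

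\emph{Item (1).} The map $\nabla G:V\to\HH^N$ is Lipschitz, hence continuous and bounded on the compact set $S$, so the Bochner integral in \eqref{defA} is well defined. Let $\bs{\mu}_n\to\bs{\mu}$ weak-star; then $\|\bs{\mu}_n\|_{TV}$ is bounded by the uniform boundedness principle.
\begin{itemize}
\item Weak convergence: $\langle A\bs{\mu}_n,h\rangle=-\langle\bs{\mu}_n,\nabla g_h|_S\rangle\to\langle A\bs{\mu},h\rangle$ for every $h$, because $\nabla g_h|_S$ is continuous.
\item Upgrade to strong convergence: since $S$ is compact, $\nabla G|_S$ is uniformly approximable by simple functions $\sum_i \chi_{P_i}\,\bs{v}_i$ with finitely many vectors $\bs{v}_i\in\HH^N$. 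We can do better and approximate it by continuous maps with values in a finite-dimensional subspace $F\subset\HH$, e.g.\ via a partition of unity subordinate to a fine cover of $S$.
\item On finite-dimensional ranges weak-star convergence gives norm convergence. The uniform error is controlled by $\sup_n\|\bs{\mu}_n\|_{TV}$ times the approximation error, so $A\bs{\mu}_n\to A\bs{\mu}$ strongly.
\end{itemize}

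\emph{Item (3).} By the adjoint formula, $A^*h=-\nabla g_h|_S$. Moreover $|\nabla g_h(x)-\nabla g_h(y)|\le\|h\|\,\|\nabla G(x)-\nabla G(y)\|$, so $A^*h$ is Lipschitz. Since $G$ is $C^1$ on $V$ (its gradient is Lipschitz), $g_h$ is $C^1$ on the open set $V\supset S$. Therefore $A^*h\in\mathcal{G}(S)$, and its restriction to any rectifiably connected component $E$ lies in $\mathcal{G}(E)$. The one point to check is that $\nabla g_h=\langle\nabla G,h\rangle$ holds classically, i.e.\ that $G$ is Fréchet differentiable. This follows because $G$ has a continuous (Lipschitz) gradient, as in the footnote to Conditions~\ref{conK}(2).

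\emph{Item (2).} This is the step where Conditions~\ref{conK}(3) is used, and it is the main point of the lemma.
\begin{itemize}
\item If $\div\bs{\mu}=0$, then $\bs{\mu}$ annihilates $\mathcal{Q}(S)$, hence every $\nabla g_h|_S$, so $\langle A\bs{\mu},h\rangle=0$ for all $h$ and $A\bs{\mu}=0$.
\item Conversely, if $A\bs{\mu}=0$, then $\bs{\mu}$ annihilates $\{\nabla g_h|_S\}$. By the density assumption (3) and continuity of $\bs{\mu}$ on $C(S,\R^N)$, it annihilates $\mathcal{G}(S)$, hence $\mathcal{Q}(S)$.
\item By the remark preceding Conditions~\ref{conK}, annihilating $\mathcal{Q}(S)$ is equivalent to being divergence free; explicitly, test against $\nabla\varphi|_S$ for $\varphi\in C_c^\infty(\R^N)$.
\end{itemize}

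Everything here is routine once the adjoint formula is justified. The only genuinely delicate spot is the weak-to-strong upgrade in item (1), which relies on compactness of $\nabla G(S)$ together with the uniform TV bound.
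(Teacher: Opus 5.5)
Your proof is correct and follows the paper's argument. You obtain $A^*h=-\nabla g_h|_S$ by commuting the Bochner integral with $\langle\cdot,h\rangle_{\HH}$, read item (3) off that formula, and get item (2) from the density hypothesis in Conditions~\ref{conK}(3). The one variation is in item (1): you approximate $\nabla G|_S$ uniformly by continuous finite-dimensional-valued kernels and run a direct $\varepsilon/3$ argument with the uniform TV bound, whereas the paper uses simple functions to show $A$ is compact (an operator-norm limit of finite-rank maps) and then combines compactness with weak-star-to-weak continuity. Both routes work.
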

\begin{proof}
  Assume Conditions~\ref{conK}. 
  Since
  linear forms commute with the Bochner integral (a special case of 
  Hille's theorem \cite[{Chapter~II, Thm~6}]{diestel1977vector}), it is
  clear from its kernel structure that $A$ is sequentially weak-star
  to weak continuous. 
In addition, the uniform continuity of
$\nabla G$ on $S$ implies that it is the uniform limit of a sequence of simple functions and therefore, by definition of the Bochner integral
\cite[Chapter II, Section 2]{diestel1977vector},  $A$  is a limit
in operator norm of finite rank operators. Hence $A$ is compact and, since we already know that it is sequentially weak-star to weak continuous, we deduce that
$A$ is sequentially weak-star to strong continuous. Thus, part (1) of Conditions~\ref{conU} holds.
  	
  Since $A$ is {\it a fortiori} strong-to-strong continuous so is
  $A^*:\mathcal{H}\to C(S,\R^N)$, and by Hille's theorem again
  we get for ${h}\in\mathcal{H}$ and $y\in S$ that
          \begin{equation}\label{Astar}
            \Af^*({h})(y) 
		=-\langle{\nabla{G}}(y),{h}\rangle_{\mathcal{H}^N\times\mathcal{H}}=-\grad g_{h}(y),
              \end{equation}
              where we write for short                
$\langle \bs{f},h\rangle_{\mathcal{H}^N\times\mathcal{H}}:=(\langle f_1,h\rangle_\mathcal{H},\ldots, \langle f_N,h\rangle_\mathcal{H})^T $  for $\bs{f}\in\mathcal{H}^N$ and $h\in \mathcal{H}$.
 From assumption (2) of Conditions \ref{conK} we see that $g_h$ lies in $C^1(V)$
 and has Lipschitz-continuous gradient there. Since $g_h$ extends to a 
 $C^1(\R^N)$-function upon multiplying it by a smooth function in $C_c(V)$ which is identically $1$ on a neighborhood of $S$, part
	(3) of Conditions~\ref{conU} is satisfied.
	
	Finally, for any $\bs{\mu}\in\mathcal{M}(S)^N$ and ${h}\in\mathcal{H}$,
	\begin{align*}
		\langle A\bs{\mu}, {h}\rangle_{\mathcal{H}} = \langle  \bs{\mu}, A^*(h)\rangle=-\langle \bs{\mu},\grad g_{{h}}\rangle,
	\end{align*}
	and by the density assumption in item (3) of Conditions~\ref{conK}
        it follows that $A\bs{\mu}=0$ if and only if $\div\bs{\mu}=0$.
	Therefore part (2) of Conditions~\ref{conU} holds.
	\end{proof}

        We now show that Conditions \ref{conE} define a forward operator $A$,
        described below, 
        that is indeed an integral operator meeting Conditions \ref{conK}.

\begin{lemma}\label{LemEK}
  Conditions~\ref{conE} imply Conditions~\ref{conK} with
  ${G}(y):=L(E(\cdot,y))$ for $y\in V$, where $E(x,y)$ is the fundamental solution at $x$ 
  of the differential operator $\mathcal{L}$, defined in \eqref{genElliptic}, with source term $\delta_y$.
\end{lemma}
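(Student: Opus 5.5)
We have to verify the four items of Conditions~\ref{conK} for $G(y):=L(E(\cdot,y))$, with $V$ the neighborhood of $S$ from Conditions~\ref{conE}(2). We rely throughout on Appendix~\ref{A}. By Theorem~\ref{soldim1}, $\Phi(\bs{\mu})$ is the Bochner integral of $-\nabla_y E(\cdot,y)$ against $d\bs{\mu}(y)$, valued in $\mathcal{Z}_{\mathcal{L}}$. Item (1) is immediate: $\mathcal{H}$ is separable and $S\subset V$ is compact.

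\textbf{Item (2).} This is the main analytic step. I would show that $y\mapsto E(\cdot,y)|_{\R^N\setminus S}$, viewed as a map $V\to\mathcal{Z}_{\mathcal{L}}$, has Lipschitz gradient.
\begin{itemize}
\item First reduce to a compact $K$ and a neighborhood $W$ of $S$ with $\overline{W}\subset V$. Away from the diagonal, $E(x,y)$ solves $\mathcal{L}_y E=0$ in $y$ by symmetry of $\bs{\Sigma}$.
\item The Dini continuity of the derivatives of $\bs{\Sigma}$ gives $C^{1,1}$-type interior estimates for $\nabla_y E(x,\cdot)$ near $S$. This needs second derivatives bounded in $y$, uniformly for $x$ in compacts of $\R^N\setminus \overline W$; a Dini-smooth coefficient gives $C^2$ regularity of solutions by the known Dini estimates.
\item Control near $S$ itself, i.e.\ for $x$ close to $S$, is the delicate point. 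Here I would use that elements of $\mathcal{Z}_{\mathcal{L}}$ are determined on $\R^N\setminus S$, and that $W^{1,2}_{loc}(\R^N\setminus S)$ seminorms only involve compacts $K\subset\R^N\setminus S$. For such a $K$ and $y$ in a small neighborhood of $S$ disjoint from $K$, the needed Lipschitz bounds on $\nabla_y E(\cdot,y)$ in $W^{1,2}(K)$ follow from the above.
\item Since $L$ is continuous and linear, $G=L\circ E$ inherits a Lipschitz gradient on a neighborhood of $S$, and we shrink $V$ accordingly.
\end{itemize}
I expect this step to be the main obstacle, because the Fréchet space structure of $\mathcal{Z}_{\mathcal{L}}$ must be reconciled with Hilbert-valued Lipschitz estimates. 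A continuous linear $L$ into a Banach space factors through finitely many seminorms, so only finitely many compacts $K$ matter. I would try exactly this factorization first.

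\textbf{Item (4).} The representation of Theorem~\ref{soldim1} gives
$$A(\bs{\mu})=L\Big(-\int\nabla_yE(\cdot,y)\cdot d\bs{\mu}(y)\Big)=-\int\nabla G(y)\cdot d\bs{\mu}(y).$$
The interchange of $L$ with the Bochner integral uses Hille's theorem, applied via the same finite-seminorm factorization. This is \eqref{defA}.

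\textbf{Item (3).} This is the density statement. By Hahn--Banach and Riesz duality, it suffices to show the following: if $\bs{\mu}\in\mathcal{M}(S)^N$ annihilates all $\nabla g_h|_S$, then it annihilates $\mathcal{G}(S)$, i.e.\ $\div\bs{\mu}=0$. The argument runs as follows.
\begin{itemize}
\item Annihilation means $\langle A\bs{\mu},h\rangle=0$ for all $h$, so $A\bs{\mu}=0$.
\item By injectivity of $L$, $\Phi(\bs{\mu})=0$ on $\R^N\setminus S$.
\item Since $S$ is slender, it has measure zero, so $\Phi(\bs{\mu})=0$ a.e. on $\R^N$, being a locally integrable function.
\item Hence $\div\bs{\mu}=-\mathcal{L}\Phi(\bs{\mu})=0$ distributionally.
\end{itemize}
Note that slenderness, hence connectedness of the complement, is only needed here through measure zero, because injectivity of $L$ on all of $\mathcal{Z}_{\mathcal{L}}$ is assumed; the unique-continuation content is already packed into Conditions~\ref{conE}(3). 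Finally, a measure annihilating $\mathcal{G}(S)$ is exactly a divergence-free one, which closes the density argument.
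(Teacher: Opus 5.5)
Your skeleton matches the paper's proof. You factor $L$ through a single seminorm $\|\cdot\|_{W^{1,2}(\Omega_n)}$ and restrict $y$ to a neighbourhood $U$ of $S$ kept away from $\overline{\Omega}_n$; the paper then takes $V:=U$. This restriction is also what gives $L(E(\cdot,y))$ a meaning for $y\notin S$: there $E(\cdot,y)$ has a pole outside $S$ and is not in $\mathcal{Z}_{\mathcal{L}}$, so the map should be taken into $W^{1,2}(\Omega_n)$ rather than into $\mathcal{Z}_{\mathcal{L}}$. You then commute $L$ with the Bochner integral of Theorem~\ref{soldim1} via Hille's theorem, and obtain density from Hahn--Banach, injectivity of $L$ and the fact that $S$ is Lebesgue-null.

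The genuine difference lies in item (2). The paper gets Lipschitz bounds into $W^{1,2}(\Omega_n)$ from Corollary~\ref{corLipab}, which uses difference quotients in $x$ plus Dong--Xu regularity for the elliptic system satisfied by $\nabla_yE(x,\cdot)$. It then still needs Aronszajn's Rademacher theorem for Hilbert-valued maps and a Bochner integral along coordinate lines to upgrade Lipschitz to $C^1$, before commuting $L$ with $\partial_{y_j}$ in \eqref{Lcommder}. You instead use uniform interior $C^{1,1}$ bounds on $E(x,\cdot)$ near $S$. These are legitimate: differentiate the equation and apply the Dini $C^1$ theory to $\partial_{y_k}E(x,\cdot)$. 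They give Fr\'echet differentiability directly from an $O(|\rho|^2)$ Taylor remainder, so your route is more elementary and skips the Rademacher step.

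Two steps are, however, only asserted.

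\emph{Control of $x$-derivatives in item (2).} The phrase ``follow from the above'' hides this. The $W^{1,2}(K)$ norm involves $\nabla_x$, and near $K$ the matrix $\bs{\Sigma}$ is merely measurable. So pointwise estimates in $y$ say nothing about $\nabla_x\nabla_yE$. You need two facts:
\begin{itemize}
\item $x\mapsto\partial_{y_j}E(x,y)$ is a weak solution of $\mathcal{L}u=0$ off the pole (cf.\ \eqref{ws2der});
\item Caccioppoli's inequality on a slightly larger compact $K'\supset K$, applied to $\nabla_yE(\cdot,y)-\nabla_yE(\cdot,y')$ and to the Taylor remainder.
\end{itemize}
This is exactly the job Corollary~\ref{corLipab} does in the paper.

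\emph{The final step of item (3).} Combining $\Phi(\bs{\mu})=0$ a.e.\ with the very weak formulation \eqref{vwes13} only gives $\int\nabla\varphi\cdot d\bs{\mu}=0$ for $\varphi$ in the image of $C_c(\R^N)$ under the Green operator. To reach every $\varphi\in C_c^\infty(V)$, and hence $\mathrm{div}\,\bs{\mu}=0$, you need $\mathcal{L}\varphi$ to be a function, i.e.\ $\bs{\Sigma}$ differentiable near $S$. That is the content of Corollary~\ref{corDinib}, and you should invoke it rather than writing $\mathrm{div}\,\bs{\mu}=-\mathcal{L}\Phi(\bs{\mu})$ directly.
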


\begin{proof}
  Assume Conditions~\ref{conE}  hold.   Without loss of generality, we assume that the countably many seminorms $\|\cdot\|_{W^{1,2}(\Omega_n)}$ defining the topology of
  $W^{1,2}_{loc}(\R^N\setminus S)$ are taken over a \emph{nested} family of
  relatively compact open
  subsets $\Omega_n$ exhausting $\R^N\setminus S$. Then, by definition of that topology, the continuity of $L$ implies that $\|L(h)\|_{\mathcal{H}}<1$ as soon as $h\in W^{1,2}_{loc}(\R^N\setminus S)$ satisfies  $\|h\|_{W^{1,2}(\Omega_n)}<\eta$
  for a certain $\eta>0$ and a certain $n\in\mathbb{N}$. Hence, by the homogeneity of $L$, one has $\|L(h)\|_{\mathcal{H}}<\eta^{-1}\|h\|_{W^{1,2}(\Omega_n)}$.
  Applying now Corollary \ref{corLipab} with $O=\Omega_n$ and letting $U\subset V$ be the corresponding open neighborhood of $S$, we deduce using
  the additivity of $L$ that for $j\in\{1,\cdots,N\}$ and $y,y^\prime\in U$ it holds
  \[\|L\left(\partial_{y_j}E(\cdot,y)\right)-L\left(\partial_{y_j}E(\cdot,y^\prime\right))\|_{\mathcal{H}}=\|L\left(\partial_{y_j}E(\cdot,y)-\partial_{y_j}E(\cdot,y^\prime\right))\|_{\mathcal{H}}\leq \frac{K}{\eta}|y-y^\prime|
    \]
    for some constant $K$. Thus,
    the map $y\mapsto (L\partial_{y_1}E(\cdot,y),\cdots,L\partial_{y_N}E(\cdot,y))^t$ is Lipschitz continuous from  $U$ into $\mathcal{H}^N$. In another connection,
     $y\mapsto E(\cdot,y)$ is  also Lipschitz continuous from $U$ into $W^{1,2}(\Omega_n,\R^N)$, by Corollary \ref{corLipab}.
Thus, since $W^{1,2}(\Omega_n,\R^N)$ is a separable Hilbert space, it follows from  \cite[Chapter III, Theorem 2]{Aronszajn-LH} (in the notation of which
we take $K$ to be the identity operator on $\R^N$) that  $y\mapsto E(\cdot,y)$
is Fréchet differentiable at a.e. $y\in U$. In particular, if $\bb{e}_j$ is the $j$-th canonical unit vector in $\R^N$,
    the limit when $\R\ni \rho\to0$ of the difference quotient
    $\frac{E(\cdot,y+\rho \bb{e}_j)-E(\cdot,y)}{\rho}$ exists in $W^{1,2}(\Omega_n)$ for
    $j\in\{1,\cdots,N\}$, and arguing as in \eqref{derquotf} we find that
    this limit must be
    $\partial_{y_j}E(\cdot,y)$. That is to say, the Fréchet derivative applied to
    $\bs{\rho}:=(\rho_1,\cdots,\rho_N)^t\in\R^N$ is
    \begin{equation}
      \label{expFrd}
      DE(\cdot,y)\cdot\bs{\rho}=\sum_{j=1}^N\partial_{y_j}E(\cdot,y)\rho_j,\qquad a.e.\  y\in U.
    \end{equation}

    Moreover, as the restriction of $y\mapsto E(\cdot,y)$ to
    every  line $\{y_0+t \bb{e}_j,\,t\in\R\}$ is Lipschitz continuous, 
    it holds for $|\rho|$ small enough  the Bochner integral representation in $W^{1,2}(\Omega_n)$:
    \begin{equation}
      \label{Bochner1D}
E(\cdot,y+\rho \bb{e}_j)-E(\cdot,y)=\int_0^\rho\partial_{y_j}E(\cdot,y+t \bb{e}_j)\,dt
\end{equation}
(see \cite[Chapter II, Section 2, proof of Lemma 1]{Aronszajn-LH}). Because
$y\mapsto \partial_{y_j}E(\cdot,y)$ is continuous (even Lipschitz) on $U$ by Corollary \ref{corLipab}, we deduce from \eqref{Bochner1D} that
the partial derivatives $\partial_{y_j}E(\cdot,y)$ continuously exist at \emph{every} $y\in U$, that is to say $y\mapsto E(\cdot,y)$ is a $C^1$ map
from $U$ into $W^{1,2}(\Omega_n)$, with Fréchet derivative given by \eqref{expFrd} where
``a.e.'' is omitted. Since $n$ can be any sufficiently large integer
the limit $\frac{E(\cdot,y+\rho \bb{e}_j)-E(\cdot,y)}{\rho}\to \partial_{y_j}E(\cdot,y)$ when
$\rho\to0$ in fact holds in $W^{1,2}_{loc}(\R^N\setminus S)$,
and by the linearity and  continuity of $L:W^{1,2}_{loc}(\R^N\setminus S)\to \mathcal{H}$, we get that $y\mapsto L(E(\cdot,y))$ is in turn continuously  Fréchet differentiable from $U$ into $\mathcal{H}$, with
\begin{align}
  \nonumber\partial_{y_j}L(E(\cdot,y))&=\lim_{\rho\to0}\frac{L(E(\cdot,y+\rho \bb{e}_j))-L(E(\cdot,y))}{\rho}\\
  \label{Lcommder}
  &=L\left(\lim_{\rho\to0}\frac{E(\cdot,y+\rho \bb{e}_j)-E(\cdot,y)}{\rho}  \right)=L(\partial_{y_j}E(\cdot,y)),\qquad y\in U.
  \end{align}
        
  This being said, Theorem \ref{soldim1}
  in Appendix A shows that the solution to \eqref{genElliptic} can be expressed as a Bochner integral in terms of the fundamental solution $E$ of $\mathcal{L}$ as
$$\Phi(\bs{\mu})(x)=-\int_S \nabla_y E(x,y) \cdot d\bs{\mu}(y).$$
Consequently, the forward operator $\Af:\mathcal{M}(S)^\N\to  \mathcal{H}$
specified in item (3) of conditions \ref{conE} can be rewritten as

\begin{align*}\Af(\bs{\mu})&:= L(\Phi(\bs{\mu}))=-L\int_S\nabla_y E(\cdot,y) \cdot d\bs{\mu}(y)\\&=-\sum_{j=1}^N  \int_SL\partial_{y_j}E(\cdot,y)  \, d {\mu_j}(y)=-\int_S \nabla_y\left(  LE(\cdot,y) \right)\cdot d\bs{\mu}(y)\\&=-\int  \nabla {G}(y)\cdot d\bs{\mu}(y),
\end{align*}
where the commutation between $L$ and the integral is justified by Hille's theorem,  and its commutation with differentiation follows from \eqref{Lcommder}. We gather from what precedes that
\[
  \nabla G(y)=\nabla(LE(\cdot,y))=\left(L\partial_{y_1}E(\cdot,y),\cdots, L\partial_{y_N}E(\cdot,y)\right)^t\quad \mathrm{is\ Lipschitz\ }U\to\mathcal{H}^N,
\]
thereby accounting for item (2) of Conditions \ref{conK}.
Furthermore, taking the $V$ of Conditions \ref{conK} to be our $U$, the map $g_h:U\to\R$
defined in Conditions \ref{conK} is given for $h\in\mathcal{H}$ by
$g_h(y)=\langle L(E(\cdot,y)),h\rangle_{\mathcal{H}}$. \emph{We claim} that
the space $\{\nabla g_h|S,\,h\in\mathcal{H}\}$
is  dense in $\mathcal{Q}(S)$. Otherwise indeed, by the Hahn-Banach theorem, there is a
$\R^N$-valued measure $\bs{\mu}\in\mathcal{M}(S)^N$ which is not divergence free
(for it does not define the zero form on $\mathcal{Q}(S)$, see Appendix \ref{Rgrad}) such that, for all $h\in\mathcal{H}$,
\begin{align*}0=\int\nabla_y\langle L(E(\cdot,y)),h\rangle_{\mathcal{H}}\cdot d\bs{\mu}(y)=
\sum_{j=1}^N \int \langle L(\partial_{y_j}E(\cdot,y)),h\rangle_{\mathcal{H}} d\mu_j(y)\\
=\left \langle\sum_{j=1}^N\int L(\partial_{y_j}E(\cdot,y)) d\mu_j(y),h\right\rangle_{\mathcal{H}}
=\left \langle L\left(\sum_{j=1}^N\int \partial_{y_j}E(\cdot,y)) d\mu_j(y)\right),h\right\rangle_{\mathcal{H}}.
\end{align*}
Consequently the left argument of the last scalar product in $\mathcal{H}$ must be zero, and since $L$ is injective by item (3) of Conditions \ref{conE}
we deduce that
\[
0=\sum_{j=1}^N\int \partial_{y_j}E(\cdot,y)) d\mu_j(y)=\int\nabla_yE(\cdot,y)\cdot d\bs{\mu}.
\]
In other words, the solution  $\Phi(\bs{\mu})$ to \eqref{genElliptic} (characterized by
Equation \eqref{vwsRv1p}) is identically zero.
However, as $\Phi(\bs{\mu})$ is a \emph{locally integrable function} by Theorem \ref{soldim1} and  since $S$ has zero Lebesgue measure because it is slender by item (1) of Conditions \ref{conE}, those $\bs{\mu}\in \mathcal{M}(S)^N$ such that $\Phi(\bs{\mu})=0$ a.e. on $\R^N$ are  precisely the divergence-free measures
supported on $S$ (in the distributional sense),
by Corollary \ref{corDinib}. This contradiction \emph{proves the claim}, and so
Conditions \ref{conK} are completely satisfied.
\end{proof}

To  make use of   Conditions \ref{conE}  in situations we have in mind, it remains for us to construct appropriate $\mathcal{H}$ and $L:\mathcal{Z}\to \mathcal{H}$
that satisfy the requirements of item (3).
In our application to inverse magnetization problems, the measurement operator
$L$  acts by restricting the gradient (or a directional derivative) of $\Phi$ to a compact set $Q\subset \R^N\setminus S$,
but since we now deal with more general elliptic operators $\mathcal{L}$ than
the Laplacian (possibly with nonsmooth coefficients),
care must be used to make sense of that restriction and ensure 
that $L$ is injective.
Studying  in depth uniqueness sets for (very weak) solutions to \eqref{genElliptic}
is beyond the scope of this paper,
but we shall at least discuss two scenarios, dwelling on known unique
continuation results, each of which will require extra assumptions on $\bs{\Sigma}$ and $Q$.
The first one allows for the conductivity $\bs{\Sigma}$  to have discontinuities along smooth interfaces, and it implies that one measures all components of the field $\nabla\Phi$ on $Q$;
this  is adapted to the setting of inverse EEG problems (see for instance  \cite{DLMT}) and  other inverse problems dealing with heterogeneous media.  The second scenario assumes that
$\bs{\Sigma}$ is analytic, in which case it is enough to measure one component
of $\nabla\Phi$ on $Q$ if the latter is an  analytic set; this can accommodate the needs of inverse magnetization problems from SQUID  or QDM  microscopy that deal with the Laplacian
(see for instance \cite{LiWe,FuLiVoTru}).
To set up the first scenario, we say that $\bs{\Sigma}$
is \emph{piecewise Lipschitz} on
an open set $\mathcal{U}$ if there are disjoint domains $\{\mathcal{O}_j\}_{1\leq j\leq l}$ with compact boundary  in $\R^N$ such that
\begin{itemize}
	\item $\mathcal{U}=(\cup_j\overline{\mathcal{O}}_j)\setminus \partial\mathcal{U}$,
	\item if a point  $x\in \mathcal{U}$ lies on some  $\partial\mathcal{O}_i$
then the entire connected component of $\partial\mathcal{O}_i$ containing $x$
is locally the graph of a $C^{1,\alpha}$-function (thus any point in $\mathcal{U}$ belongs to at most two $\partial\mathcal{O}_j$, compare \cite{LiNir}),
	\item $\bs{\Sigma}$ is  Lipschitz-continuous on  $\mathcal{O}_j\cap \mathcal{U}_\varepsilon$ for all $j$ and every $\varepsilon>0$, where $\mathcal{U}_\varepsilon\subset\mathcal{U}$ designates the set of points at distance strictly greater
than $\varepsilon$ from $\partial\mathcal{U}$.
\end{itemize}
The $\mathcal{O}_j$ are the \emph{pieces} of the Lipschitz decomposition.
The two scenarios are as follows.
\color{white}{a}\color{black}
\begin{itemize}
\item[(S.1)]
  $\bs{\Sigma}$ is piecewise Lipschitz  on $\R^N\setminus S$ with Dini-continuous derivatives in a neighborhood of $S$\footnote{Equivalently:  $\bs{\Sigma}$ is piecewise Lipschitz  on $\R^N\setminus S$, each component of $S$ is included in a piece, and in these pieces $\bs{\Sigma}$ has Dini-continuous derivatives.}. Besides $Q$ is a closed subset of
  $\R^N\setminus S$ and either (i) $Q$
  is the closure of a nonempty  open set, or (ii) $Q$ is the closure of a nonempty open subset of   a $C^{1,\alpha}$-hypersurface for some $\alpha\in(0,1)$.
  The measurement operator $L$ maps $\Phi$
  to $\Phi_{|Q}$ or 
  $\nabla\Phi_{|Q}$ in case (i), and in case (ii) it maps $\Phi$ to
  $\Phi_{|Q}$  (or the tangential gradient of $\Phi$ on $Q$) together with
  the normal derivative $\partial\Phi/\partial n^\pm$ from either side of $Q$;
   here, $n^+$ (resp. $n^-$) indicates the unit normal
   to  $Q$  pointing in the positive (resp. negative) direction for some choice of orientation, and 
   the normal derivatives from each side may be different if $Q$ has non-trivial intersection with some $\partial\mathcal{O}_i$.
   Clearly
   $L:\mathcal{Z}_{\mathcal{L}}\to\mathcal{H}$  is continuous
    with  $\mathcal{H}:=L^2(Q)$ or $L^2(Q,\R^N)$ in case (i), while
      $\mathcal{H}:=L^2(Q)\times L^2(Q)$
      or $L^2(Q,\R^N)$  in case (ii); this
      follows from \cite[Theorem 1.1]{LiVog} which  implies
      that $\nabla \Phi$
      extends Hölder-continuously to each $\overline{\mathcal{O}}_j\cap (\R^N\setminus S)_\varepsilon$ for  any $\varepsilon>0$.       
      The injectivity of $L$ is proven in  Lemma \ref{S1}.

     \item[(S.2)] $Q$ is the closure of an open subset of a closed, nonsingular
       real analytic
       hypersurface $\mathcal{A}$ that does not meet $S$,  while $\bs{\Sigma}:\R^N\to\R^{N\times N}$ is analytic
        and $L$ maps $\Phi$ to the restriction  $\Phi_{|Q}$ or to
       $\partial\Phi/\partial n^{\pm}_{|Q}$ ($+$ or $-$) with $\mathcal{H}:=L^2(Q)$.
       This framework
suits in particular the inverse magnetization problem \eqref{PhiDef}.
\end{itemize}

The assumptions of the scenarios could be improved a little.
For instance, Lipschitzianity could be traded for
local Lipschitzianity plus Dini-continuity of  $\bs{\Sigma}$ on
$\partial\mathcal{O}_j\cap \mathcal{U}_\varepsilon$ in Scenario (S.1),
and  global analyticity could be traded for Lipschtzianity plus
analyticity about  $\mathcal{A}$  in Scenario (S.2).
No such hypotheses would be sharp anyway,
and we do not strive for  generality here.

\begin{appendices}

\section{Elliptic equations  with source  the divergence of a measure.}
\label{A}
We will study existence and uniqueness 
of a very weak solutions to the  elliptic PDE
\begin{equation}
  \label{eqelf}
  \mathcal{L}\Phi:=\div(\bs{\Sigma}\nabla \Phi)
  =\mathrm{div}\,\bs{\mu}\qquad \mathrm{on}\ \R^N,\qquad \lim_{|\bs{x}|\to\infty}\Phi(\bs{x})=0,
  \end{equation}
with $\bs{\mu}$ a compactly supported $\R^N$-valued measure
and $\bs\Sigma$ a real symmetric, uniformly elliptic matrix-valued function
which is Dini-continuous in a neighborhood of $\supp \bs{\mu}$.
The 
Dini-continuity assumption saves us
from resorting to even weaker types of solutions,
and is sufficient for us as
we shall make  stronger hypotheses anyway
to fulfill Conditions E in Section \ref{Sect2.1}. 
Equation  \eqref{eqelf} is standard when $\bs{\mu}$ is an $L^2$-function,  and
is also well-studied if $\mathrm{div}\,\bs{\mu}$ gets replaced by a
(scalar-valued) measure, but the case where the right hand side is the divergence of a measure was apparently not much studied. This is the \emph{raison d'être}
for the present Appendix. We base our analysis on Green functions
which is a classical tool, but we need to set out some of their properties
in the case of $\R^N$ which is considered a bit marginally in the literature.
We assume throughout that $N\geq3$.
\subsection{Background on Sobolev spaces}
\label{prelSobetc}
Hereafter for convenience,  $E$ denotes a measurable set and $\Omega$ an open set in $\R^N$. Pieces of notation regarding measures,
continuous and smooth functions were made in 
Section \ref{preln}.
 We let $m$ denote Lebesgue measure on $\R^N$ no matter $N$, and
  $L^p(E,\R^m)$ be the Lebesgue space  of (equivalence classes
of a.e. coinciding) functions $f:E\to\R^m$ such that  $\|f\|_{L^p(E,\R^m)}^p:=\int_E|f|^pdm<\infty$ if $p\in[1,\infty)$, or  $\|f\|_{L^\infty(E,\R^m)}:=\mathrm{ess. sup}_E |f|$. The space
  $ L_{loc}^p(\Omega,\R^m)$ consists of functions whose restriction to each
  compact  $K\subset \Omega$ lies in
  $L^p(K,\R^m)$, endowed with the projective topology of the restriction maps
  $L^p_{loc}(\Omega,\R^m)\to L^p(K,\R^m)$.
In the previous notation, we  omit $\R^m$ if $m=1$. 

When $C_c^\infty(\Omega)$ is endowed with the usual
topology\footnote{{\it i.e.}, the inductive topology of the subspaces  
$\mathcal{D}_K$ comprised of functions  supported on a compact set 
$K\subset\Omega$, each $\mathcal{D}_K$ being topologized by uniform
convergence of 
all derivatives, see  \cite[Ch. I, Sec.2]{Schwartz}.},
its dual $\mathcal{D}'(\Omega)$ 
is the space of distributions on $\Omega$. 
We let $W^{1,p}(\Omega)$  be the Sobolev space of functions that belong to  $L^p(\Omega)$ as well as their first distributional derivatives. Letting
$\partial_j$ indicate the derivative with respect to the $j$-th variable,
it is a Banach space with norm
$\|f\|^p_{W^{1,p}(\Omega)}:=\|f\|^p_{L^p(\Omega)}+\sum_{j=1}^N\|\partial_j f\|^p_{L^p(\Omega)}$ for $ 1\leq p<\infty$,  and $\|f\|_{W^{1,\infty}(\Omega)}:=\max\{\|f\|_{L^\infty(\Omega)},\max_{1\leq j\leq N}\|\partial_j f\|_{L^\infty(\Omega)}\}$.
One can show that $W^{1,\infty}(\Omega)=\mathrm{BLip}(\Omega)$ with equivalent norms
\cite[Chapter V, Section 6.2]{Stein}.
The closure of $C^\infty_c(\Omega)$ in $W^{1,p}(\Omega)$ is denoted by
$W^{1,p}_0(\Omega)$, and $W^{1,p}_0(\R^N)=W^{1,p}(\R^N)$ for $1\leq p<\infty$.
The space 
$ W_{loc}^{1,p}(\Omega)$ consists of functions whose restriction to each relatively compact open subset $V$ of $\Omega$ lies in
$W^{1,p}(V)$, equipped with the seminorms $\|\cdot\|_{W^{1,p}(\Omega_k)}$
where $\Omega_k$ is a sequence of such open subsets exhausting $\Omega$. 


If $\Omega$ is Lipschitz then   $W^{1,p}(\Omega)$ coincides
with  restrictions to $\Omega$ of functions in $W^{1,p}(\R^n)$, and the restriction map  $W^{1,p}(\R^N)\to W^{1,p}(\Omega)$ has a  continuous right inverse $W^{1,p}(\Omega)\to W^{1,p}(\R^N)$, called an \emph{extension operator}
\cite[Ch. VI, Thm 5]{Stein}.
The boundary $\partial\Omega$ is a Lipschitz Riemannian manifold under the Euclidean scalar product, whose  volume measure $\sigma$ coincides with
$(N-1)$-Hausdorff measure restricted to $\partial\Omega$ \cite[Section 5.8.3]{Ziemer}.
Each $f$ in $W^{1,p}(\Omega)$, $1<p\leq\infty$,
has a trace on $\partial\Omega$ lying in the fractional 
Sobolev space $W^{1-1/p,p}(\partial\Omega)$ where, for $0<\theta<1$ and $\psi$ a
$\sigma$-measurable function on
$\partial\Omega$, we put
\begin{equation}
  \label{defBeN}
    \|\psi\|_{W^{\theta,p}(\partial\Omega)}:=
\|\psi\|_{L^p(\partial\Omega)}+\left(\int_{\partial\Omega}\int_{\partial\Omega}
                                              \frac{|\psi(x)-\psi(y)|^p}{|x-y|^{N-1+\theta p}}d\sigma(x)d\sigma(y)\right)^{1/p}.
\end{equation}
More precisely the trace operator, initially defined on
$C_c^\infty(\R^n){|_\Omega}$, extends to a continuous map
$W^{1,p}(\Omega)\to W^{1-1/p,p}(\partial\Omega)$. In fact, membership in $W^{1-1/p,p}(\partial\Omega)$
characterizes traces of $W^{1,p}(\Omega)$-functions on the boundary of a
Lipschitz domain,
since the trace operator
has a continuous right inverse
$W^{1-1/p,p}(\partial\Omega)\to W^{1,p}(\Omega)$ \cite[Thm. 1.5.1.3]{Grisvard}
\cite[Theorem 3.1]{Mattila}.
The space $W^{1,p}_0(\Omega)$ is comprised of $W^{1,p}(\Omega)$-functions having zero
trace   \cite[Thm. 1.5.1.4]{Grisvard}. Traces of
$W^{1,1}(\Omega)$-functions  exist in $L^1(\partial\Omega,\sigma)$.

The Sobolev space
$W^{k,p}(\Omega)$   for $1<k\in\mathbb{N}$ consists of
functions whose first order derivatives belong to $W^{k-1,p}(\Omega)$
with norm $\|f\|^p_{W^{k,p}(\Omega)}=\|f\|^p_{L^p(\Omega)}+\sum_{j=1}^N\|\partial_j f\|^p_{W^{k-1,p}(\Omega)}$ when $p<\infty$, and the usual variant when $p=\infty$.
The definition of  $W^{k,p}_{loc}(\Omega)$ runs parallel to the one of $W^{1,p}_{loc}(\Omega)$ and, though we are mainly concerned with $k=1$, we shall have an occasion to deal with $k=2$.

If $0>k\in\Z$ and $p>1$, the space $W^{k,p}(\Omega)$ is the dual of  $W_0^{-k,p^\prime}(\Omega)$, $\frac{1}{p}+\frac{1}{p^\prime}=1$.
This space of distributions  may be realized as the completion of $L^p(\Omega)$  for the norm 
\begin{equation}
  \label{defdS}
  \|g\|_{W^{k,p}(\Omega)}:=\sup_{f\in W_0^{-k,p^\prime}(\Omega),\|f\|_{W^{-k,p^\prime}(\Omega)=1}}\left|\int fg\,dm\right|,
\end{equation}
see \cite[Theorem 3.13]{AdamsFournier}.
In a similar vein, the space $W^{-1/2,2}(\partial\Omega)$ is the dual of $W^{1/2,2}(\partial\Omega)$,
and can be realized as the completion of $L^2(\partial\Omega)$ for a norm analogous to \eqref{defdS} (with $p=2$) except that the $\sup$ is taken over functions of unit
norm in $W^{1/2,2}(\partial\Omega)$ (see \eqref{defBeN}) and the integral is computed against $d\sigma$.

A distribution on $\Omega$ whose first derivatives 
belong to $L^p(\Omega)$ necessarily lies in  $W^{1,p}_{loc}(\Omega)$
\cite[Sec. 1.1.2]{Mazya}.
The quotient space modulo constants
of  such distributions is the \emph{homogeneous Sobolev space} of
index $p$ on $\Omega$, denoted by $\dot{W}^{1,p}(\Omega)$. If $\dot{g}$ denotes
the equivalence class of a distribution $g$, then
the norm
$\|\dot{g}\|_{\dot{W}^{1,p}(\Omega)}:=\|\nabla g\|_{L^p(\Omega,\R^N)}$
turns $\dot{W}^{1,p}(\Omega)$ into a Banach space \cite[Sec. 1.1.13, Thm. 1]{Mazya}.
 Clearly $C_c^\infty(\Omega)$
 injects into $\dot{W}^{1,p}(\Omega)$, and we let
$\dot{W}^{1,p}_0(\Omega)$ indicate its closure. 
When $1\leq p<N$, each member of  $\dot{W}^{1,p}_0(\Omega)$ has a representative in
$L^{p^*}(\Omega)$ where $p^*:=Np/(N-p)$. This representative is unique
if $m(\Omega)=\infty$, and the natural map
$\dot{W}^{1,p}_0(\Omega)\to L^{p^*}(\Omega)$ is continuous; to see
this, it is enough to consider  $\Omega=\R^N$ in which case the result
follows from
\cite[Corollary 1.77 \& Theorem 1.78]{MalZie}. So, if for $1\leq p<\infty$
we let $Y^{1,p}(\Omega)$
denote the space of those $g\in L^{p^*}(\Omega)$ such that
$\nabla g\in L^p(\Omega,\R^N)$ endowed with the norm $\|g\|_{Y^{1,p}(\Omega)}:=\|\nabla g\|_{L^p(\Omega,\R^N)}$,
 and   if we put $Y_0^{1,p}(\Omega)$ for the closure of $C_c^\infty(\Omega)$ in
 $Y^{1,p}(\Omega)$, we can   identify $\dot{W}^{1,p}_0(\Omega)$ with
 $Y^{1,p}_0(\Omega)$ (which is a space of functions).  Note  that
 $W^{1,p}_0(\Omega)\subset Y^{1,p}_0(\Omega)$ by Sobolev's inequality, and observe in addition that
 $Y^{1,p}(\R^N)=Y^{1,p}_0(\R^N)\sim \dot{W}^{1,p}_0(\R^N)=\dot{W}^{1,p}(\R^N)$
 \cite[Lemma 1.76 \& Theorem 1.78]{MalZie}.

 We shall be concerned with the case $p=2<N$ only.  Clearly, $Y^{1,2}(\Omega)$ is a Hilbert space with scalar product $\langle\phi,\psi\rangle_{Y^{1,2}(\Omega)}=\int_\Omega\nabla\phi\cdot\nabla\psi dm$.
 The dual space $(Y^{1,2}(\Omega))^*$ is thus isometric to $Y^{1,2}(\Omega)$, but
 its members may also be described (non isometrically) as distributions of the form $\mathrm{div}\,f$ with $f\in L^2(\Omega,\R^N)$. In fact, if $\mathcal{D}^2(\Omega)$
 denotes the divergence-free fields in $L^2(\Omega,\R^N)$ and 
 $\mathcal{D}_0^2(\Omega)$ indicates the closure of $\mathcal{D}^2(\Omega)\cap C_c^\infty(\Omega,\R^N)$ in that space, then
the norm of $\mathrm{div}\,f\in (Y^{1,2}(\Omega))^*$
is the distance from $f$ to  $\mathcal{D}_0^2(\Omega)$ in $L^2(\Omega,\R^N)$;
see \cite[Lemma 2]{BLN}.
Likewise  the Hilbert space $Y^{1,2}_0(\Omega)$
is its own dual,  but the latter also identifies with
 distributions $\mathrm{div}\,f$, $f\in L^2(\Omega,\R^N)$,
 endowed with the quotient norm  in $L^2(\Omega,\R^N)/\mathcal{D}^2(\Omega)$.


   
\subsection{Green functions} 
Consider a partial differential operator $\mathcal{L}$ on an open set
$\Omega\subset\R^N$, acting on a function $u:\Omega\to\R$ by the formula
\begin{equation}\label{genEllop}
  \mathcal{L}u:=-\div(\bs{\Sigma}\nabla u),
\end{equation}
where ``$\div$'' stands for ``divergence''; definition \eqref{genEllop} means that if $u\in W^{1,1}_{loc}(\Omega)$ then
 $\mathcal{L}u$ is the distribution acting on $\varphi\in C_c^\infty(\Omega)$ by
 $ \langle \mathcal{L}u,\varphi\rangle=\int(\nabla u)^t\bs{\Sigma}\nabla\varphi\,dm$, and we assume throughout  that $\bs{\Sigma}:\Omega\mapsto\R^N\times\R^N$ is valued in the symmetric positive definite real matrices satisfying, for fixed constants $c$, $C$, the uniform ellipticity condition:
\begin{equation}
  \label{sell}
  0<c \,\mathrm{I}_N\leq\bs{\Sigma}\leq C\,\mathrm{I}_N
\end{equation}
where $\mathrm{I}_N$ stands for the identity matrix of size $N$.
 For $T$ a distribution on $\Omega$, a
 \emph{weak solution} to $\mathcal{L}u=T$ is
a $u\in W^{1,1}_{loc}(\Omega)$ such that
\begin{equation}
  \label{wes1}
  \int (\nabla u)^t\bs{\Sigma}\nabla \varphi\,dm=\langle T,\varphi\rangle,\quad \forall \varphi\in C_c^\infty(\Omega).
\end{equation}
When $T\in (Y^{1,2}_0(\Omega))^*$, Equation \eqref{wes1} has a unique weak solution in $Y^{1,2}_0(\Omega)$  by \eqref{sell}
and the Lax-Milgram theorem, together with
 the density of $C_c^\infty(\Omega)$ in $Y^{1,2}_0(\Omega)$. We denote this solution  by
$G_\Omega(T)$ and  call $G_\Omega:(Y^{1,2}_0(\Omega))^*\to Y^{1,2}_0(\Omega)$  the \emph{Green operator} for $\Omega$. 
The Fredholm theory implies that   for each $g\in L^2(\Omega)$, $f\in L^2(\Omega,\R^N)$ and
$h\in W^{1,2}(\Omega)$, there is a unique weak solution $u\in W^{1,2}(\Omega)$
to $\mathcal{L}u=g+\mathrm{div}f$ such that  $u-h\in W^{1,2}_0(\Omega)$
\cite[Theorem 8.3]{GiTru}. In particular, when $g=h=0$ we get a  weak solution
to $\mathcal{L}u=\mathrm{div}f$ lying 
in $W^{1,2}_0(\Omega)$, and since the latter is included in $Y^{1,2}_0(\Omega)$
this solution is but $G(\mathrm{div}f)$.
Thus, our definition of the Green operator is  consistent with
\cite[Section 8.2]{GiTru} (we will only apply it to distributions of the form
$\mathrm{div}f$ with  $f\in L^2(\Omega,\R^N)$).

A famous result by De Giorgi- Nash asserts that
a weak solution $u\in W^{1,2}_{loc}(\Omega)$ to $\mathcal{L}u=0$ is locally H\"older-continuous \cite[Theorem 8.24]{GiTru}. The  assumption $u\in W^{1,2}_{loc}(\Omega)$ is important, for
other weak solutions may exists in $W^{1,p}_{loc}(\Omega)$, $p<2$,
that are not even locally bounded \cite{Serrin}.

The open set $\Omega$ is said to satisfy an \emph{exterior cone condition} at
$x\in\partial\Omega$ if there is a closed right circular truncated
cone $V_x$ with vertex at $x$ such that $\overline{\Omega}\cap V_x=\{x\}$;  the condition is said to be \emph{uniform} if it holds at
every  $x\in\partial\Omega$ and all the $V_x$ are congruent to a
fixed cone $V$. Clearly,  Lipschitz open sets have this property.
Assume for now that $\Omega$ is bounded and satisfies  a uniform exterior cone condition.
Each $\psi\in C(\overline{\Omega})$ defines a member of
$(Y^{1,2}_0(\Omega))^*$ by $\phi\mapsto\int\psi\phi dm$.
Hence,  $G_\Omega(\psi)$ is well-defined
and it follows from \cite[Theorem 8.29]{GiTru}
    that it extends  H\"older-continuously to
    $\overline{\Omega}$. Subsequently, 
for $\nu$ a signed measure in $\mathcal{M}(\Omega)$,
we say that  $u\in L^1(\Omega)$ 
is a \emph{very weak} solution vanishing on $\partial\Omega$
to the equation $\mathcal{L}u=\nu$ if
\begin{equation}
  \label{vwes1}
\int_\Omega u\psi\,dm=\int  G_\Omega(\psi)d\nu,\quad \psi\in C(\overline{\Omega}).
\end{equation}
Since $G_\Omega(\psi)$  is continuous on $\overline{\Omega}$,
      the right hand side of  \eqref{vwes1} indeed makes sense; {\it compare} 
    \cite[Sections 5 \&  10]{LSW}\footnote{In that reference, weak solutions are called (local) solutions and very weak ones are called  weak solutions.}.
    A very weak solution  lies in $W^{1,2}_0(\Omega)$ if and only if $\nu\in W^{-1,2}(\Omega)$, in which case it is also  a weak solution 
\cite[Remark 5.1 \& Theorem 5.2]{LSW}.

    To each
    $y\in\Omega$
    there is a unique very weak solution 
    to $\mathcal{L}u=\delta_y$, where $\delta_y$ is the Dirac mass at $y$; see  \cite[Theorem 5.1]{LSW}.
    We denote  it with $E_\Omega(\cdot,y)$ and call it the
    \emph{Green function}, or  \emph{fundamental solution} of $\mathcal{L}$
    in $\Omega$ with pole at $y$. From
    \cite[Theorem 5.1]{LSW} we know that $E_\Omega(\cdot,y)\in W^{1,q}_0(\Omega)$ for $1\leq q<\frac{N}{N-1}$, and the proof shows that
    $y\mapsto E_\Omega(\cdot,y)$ is continuous $\Omega\to W^{1,q}_0(\Omega)$; see
    \cite[Equation (5.2) {\it ff.}]{LSW}.
One has  $E_\Omega(x,y)\geq0$ and  $E_\Omega(x,y)=E_\Omega(y,x)$  for $x,y\in\Omega$,
see \cite[Section 6]{LSW}.

By  \cite[Theorem 6.1]{LSW}, for each $\nu\in \mathcal{M}(\Omega)$ there is a unique very weak solution vanishing on $\partial\Omega$ to  $\mathcal{L}u=\nu$, it lies in $W^{1,q}_0(\Omega)$ for $1\leq q<\frac{N}{N-1}$ and  it can be expressed as
    \begin{equation}
      \label{defvwm}
u(x)=\int E_\Omega(x,y)d\nu(y),\qquad \mathrm{a.e.}\ x\in\R^N.
\end{equation}
From  \eqref{defvwm} we see that  $E_\Omega(x,y)$ is the same as the fundamental solution constructed in \cite{GruWid}, for   it must be the Schwartz kernel of the map
$\varphi\mapsto \,G_\Omega(\varphi)$ from $C_c^\infty(\Omega)$ into $\mathcal{D}^\prime(\Omega)$. Thus, $E_\Omega(\cdot,y)$ is not just a very weak solution to
 $\mathcal{L}u=\delta_y$ vanishing on $\partial\Omega$, but actually a weak solution in $W^{1,p}_0(\Omega)$
for $p\in[1,\frac{N}{N-1})$, see \cite[Theorem (1.1)]{GruWid}.

In \cite[Theorem 6.1]{LSW}, formula \eqref{defvwm} is understood
pointwise and the right hand side  proven convergent for a.e.
$x\in\Omega$, but as we now show  it may also be regarded as the
Bochner integral,
       with respect to $\nu$, of the continuous map  $y\mapsto E_\Omega(\cdot,y)$,
       valued in $W^{1,q}_0(\Omega)$ for $q\in[1,\frac{N}{N-1})$; see
       {\it e.g.} \cite[Chapter II, Section 2]{diestel1977vector}
       for a definition of the Bochner integral as
       a strong limit of integrals of  Banach-valued simple functions
       converging pointwise to the integrand.  Indeed, when $\supp\nu\subset\Omega$ is compact, reference \cite[Chapter 3, Theorem 27 \& Ex. 23]{Rudin} implies that
       the Bochner integral $I:=\int E_\Omega(\cdot,y)d\nu(y)$ is
       characterized by the identity $x^*(I)=\int x^*(E_\Omega(\cdot,y))d\nu(y)$ for every $x^*\in (W^{1,q}_0(\Omega))^*$.  Hence,   
      the realization of the dual space of $W^{1,q}_0(\Omega))$ given in
      \eqref{defdS} along with the Fubini and the dominated convergence theorems yield that \eqref{defvwm} is a Bochner integral;
        when  $\nu$ is merely 
      carried by $\Omega$ but not supported there, it can be approximated
      by a sequence of restrictions $\nu\mathcal{b}\Omega_n$ where $\overline{\Omega}_n\subset\Omega$  increases to $\Omega$, and the corresponding 
      Bochner integrals form a Cauchy sequence by \cite[Theorem 3.29]{Rudin},
      thereby showing that \eqref{defvwm} is a Bochner integral in all cases.
      

      Our first lemma deals with the $W^{1,2}$-continuity of $y\mapsto E_\Omega(\cdot,y)$ away from the pole, a result that we could not reference in the literature.
      \begin{lemma}
        \label{reg2locb}
       For $\Omega$ a bounded Lipschitz domain, $\mathcal{L}$ an  elliptic operator given by  \eqref{genEllop} where \eqref{sell} holds, $B$ an open ball with
        $\overline{B}\subset\Omega$ and $z\in B$, the  restriction
    $E_\Omega(\cdot,z)_{|\Omega\setminus\overline{B}}$,  of its Green function to
    $\Omega\setminus\overline{B}$ is
    a weak solution to $\mathcal{L}u=0$
    and
    $\|E_\Omega(\cdot,z)_{|\Omega\setminus\overline{B}}\|_{W^{1,2}(\Omega\setminus\overline{B})}$ is bounded  solely in terms of $\Omega$, $N$, the constants in \eqref{sell}, the radius of $B$ and $\mathrm{dist}(z,\partial B)$ as well as  $\mathrm{dist}(\partial\Omega,\partial B)$;
     the map
    $z\mapsto E_\Omega(\cdot,z)_{|\Omega\setminus\overline{B}}$ is continuous
    $B \to W^{1,2}(\Omega\setminus\overline{B})$.
  \end{lemma}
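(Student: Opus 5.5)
The plan is to treat $u_z:=E_\Omega(\cdot,z)$ on $\Omega\setminus\overline{B}$ by duality and local regularity, writing it as a $W^{1,2}$-correction of a cutoff of itself.

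\emph{Step 1: $u_z$ solves $\mathcal{L}u=0$ away from the pole.} Since $E_\Omega(\cdot,z)$ is a weak solution in $W^{1,p}_0(\Omega)$, $p<\frac{N}{N-1}$, of $\mathcal{L}u=\delta_z$ (as recalled from \cite{GruWid}), testing \eqref{wes1} with $\varphi\in C_c^\infty(\Omega\setminus\overline{B})$ gives zero on the right. Hence $u_z$ is a weak solution of $\mathcal{L}u=0$ on $\Omega\setminus\{z\}$, and a fortiori on $\Omega\setminus\overline{B}$, except that $W^{1,2}$ membership is still missing.

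\emph{Step 2: $W^{1,2}$ bound.} Choose a concentric ball $B'\supset\overline{B}$ with $\overline{B'}\subset\Omega$, whose radius is fixed by $\mathrm{dist}(\partial\Omega,\partial B)$, and a cutoff $\eta\in C^\infty$ with $\eta=0$ on a neighborhood of $\overline{B}$, $\eta=1$ off $B'$, and $|\nabla\eta|$ controlled by these distances. Set $v:=\eta u_z$. Then
\[
\mathcal{L}v=-\mathrm{div}(u_z\bs{\Sigma}\nabla\eta)-(\nabla\eta)^t\bs{\Sigma}\nabla u_z ,
\]
whose data are supported in the annulus $A:=B'\setminus\overline{B}$, and $v$ vanishes on $\partial\Omega$. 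On a slightly larger annulus $A'$, at distance $\geq\delta:=\mathrm{dist}(z,\partial B)$ from $z$, the function $u_z$ is a weak solution of $\mathcal{L}u=0$, and I expect the key difficulty to be bounding it in $L^\infty(A')$. For this I would use the Grüter–Widman pointwise bound $E_\Omega(x,z)\le K|x-z|^{2-N}$ (\cite[Theorem 1.1]{GruWid}), with $K$ depending only on $N$ and the constants in \eqref{sell}, which gives $\sup_{A'}u_z\le K\delta^{2-N}$. Caccioppoli's inequality then bounds $\|\nabla u_z\|_{L^2(A)}$. One point needs care here: Caccioppoli must be applied to a function known to be in $W^{1,2}_{loc}$ away from $z$. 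I would get this from \cite[Theorem 1.1]{GruWid}, which states $E_\Omega(\cdot,z)\in W^{1,2}(\Omega\setminus B_r(z))$; alternatively one can approximate with the regularized Green functions of \cite{GruWid} and pass to the limit. With these bounds the right-hand side above lies in $W^{-1,2}(\Omega)$ with controlled norm. By uniqueness of very weak solutions \cite[Remark 5.1 \& Theorem 5.2]{LSW}, $v$ coincides with the Lax–Milgram solution in $W^{1,2}_0(\Omega)$, so $\|v\|_{W^{1,2}(\Omega)}$ is bounded by the stated quantities (Poincaré on bounded $\Omega$). Finally, on $\Omega\setminus B'$ we have $u_z=v$, while on $A$ we already have $L^\infty$ and gradient bounds; together these give the bound on $\|u_z\|_{W^{1,2}(\Omega\setminus\overline{B})}$.

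\emph{Step 3: continuity in $z$.} Fix $z_0\in B$ and restrict to $z$ in a smaller ball around $z_0$ staying at distance $\ge\delta/2$ from $\partial B$. Let $z_n\to z_0$. By Step 2 the sequence $u_{z_n}$ is bounded in $W^{1,2}(\Omega\setminus\overline{B})$. Since $y\mapsto E_\Omega(\cdot,y)$ is continuous into $W^{1,q}_0(\Omega)$, we have $u_{z_n}\to u_{z_0}$ in $L^q$, so every weak $W^{1,2}$ limit point equals $u_{z_0}$ and the convergence is weak. To upgrade to strong convergence, write $v_n=\eta u_{z_n}$. Its source terms $u_{z_n}\nabla\eta$ and $(\nabla\eta)^t\bs\Sigma\nabla u_{z_n}$ live on $A$. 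There, De Giorgi–Nash \cite[Theorem 8.24]{GiTru}, applied with uniform bounds, gives equi-Hölder continuity of $u_{z_n}$ on $\overline{A'}$, hence uniform convergence. The gradient term is handled by Caccioppoli applied to $u_{z_n}-u_{z_0}$, which gives $\nabla u_{z_n}\to\nabla u_{z_0}$ in $L^2(A)$. The data therefore converge in $W^{-1,2}$, so $v_n\to v_0$ in $W^{1,2}_0(\Omega)$ by continuity of the Green operator $G_\Omega$. Combined with the convergence on $A$, this yields strong convergence in $W^{1,2}(\Omega\setminus\overline{B})$.
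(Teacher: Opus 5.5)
Your route is valid, but it globalizes the local bound near $\partial B$ differently from the paper. Both arguments rest on the same three inputs:
\begin{itemize}
\item the Gr\"uter--Widman bound $E_\Omega(x,z)\le K|x-z|^{2-N}$;
\item the qualitative fact $E_\Omega(\cdot,z)\in W^{1,2}(\Omega\setminus B_r(z))$;
\item Caccioppoli's inequality on a region straddling $\partial B$ and kept a fixed distance from the pole.
\end{itemize}
The paper then bounds the trace of $E_\Omega(\cdot,z)$ in $W^{1/2,2}(\partial B)$ and extends it into a thin shell. It then applies the a priori estimate for the Dirichlet problem on $\Omega\setminus\overline{B}$ \cite[Corollary 8.7]{GiTru}. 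Continuity in $z$ is handled the same way, with uniform convergence off $B_1$ coming from the H\"older estimate \cite[Theorem (1.9)]{GruWid} in the pole variable together with symmetry of $E_\Omega$.

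You instead cut off, setting $v=\eta u_z$, and solve on all of $\Omega$ by Lax--Milgram with $W^{-1,2}$ data supported in the annulus. Continuity in $z$ then reduces to continuity of $G_\Omega$ from $W^{-1,2}$ to $W^{1,2}_0$. You obtain uniform convergence on the annulus from De Giorgi--Nash equicontinuity combined with the $W^{1,q}_0$-continuity of $y\mapsto E_\Omega(\cdot,y)$. Your version avoids fractional trace spaces and the extension operator entirely. The paper's version is phrased directly as a Dirichlet problem on $\Omega\setminus\overline{B}$ with controlled data on $\partial B$, which is the framework it reuses in Lemma \ref{inclusE}.

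One justification must be replaced. You cannot identify $v$ with the Lax--Milgram solution through uniqueness of very weak solutions in \cite{LSW}. That theory treats measure data, and $-\mathrm{div}(u_z\bs{\Sigma}\nabla\eta)$ is not a measure. Moreover, uniqueness of weak solutions in $W^{1,p}_0(\Omega)$ with $p<2$ can fail for measurable coefficients (Serrin's example \cite{Serrin}).

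The identification follows instead from facts you already cite:
\begin{itemize}
\item Since $u_z\in W^{1,p}_0(\Omega)$ and $u_z\in W^{1,2}(\Omega\setminus B_r(z))$, with $\eta$ vanishing near $\overline{B}\ni z$, the function $v$ lies in $W^{1,2}(\Omega)$.
\item $v$ has zero trace on the Lipschitz boundary $\partial\Omega$, because its $W^{1,2}$ and $W^{1,p}$ traces coincide. Hence $v\in W^{1,2}_0(\Omega)$.
\item $v$ satisfies the weak equation against $C_c^\infty(\Omega)$: test $\mathcal{L}u_z=\delta_z$ with $\eta\varphi$ and use $\eta(z)=0$. By density it then holds against all of $W^{1,2}_0(\Omega)$, so uniqueness in $W^{1,2}_0(\Omega)$ applies.
\end{itemize}

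A minor point: the enlarged annulus $A'$ needed for Caccioppoli up to $\partial B$ must reach into $B$. It therefore lies at distance about $\delta/2$ from $z$, not $\delta$; this only changes constants.
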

  \begin{proof}
    That $E_\Omega(\cdot,z)_{|\Omega\setminus\overline{B}}$  lies in
    $W^{1,2}(\Omega\setminus\overline{B})$ follows from \cite[Theorem (1.1)]{GruWid}. Since $E_\Omega(\cdot,y)$ is
    a weak solution to  $\mathcal{L}u=\delta_y$  in  $W_0^{1,p}(\Omega)$
    for $p\in[1,\frac{N}{N-1})$, as pointed out earlier, $E_\Omega(\cdot,z)_{|\Omega\setminus\overline{B}}$  is 
    a weak solution to $\mathcal{L}u=0$ in $W^{1,2}(\Omega\setminus\overline{B})$       when $z\in B$. Putting for simplicity $B_1:=B(z,\mathrm{dist}(z,\partial B)/2)$, we know from \cite[Theorem (1.1)]{GruWid} that   $E_\Omega(\cdot,z)$ is bounded on $\Omega\setminus\overline{B}_1$ by a constant depending only on $N$, $c$, $C$  and $\mathrm{dist}(z,\partial B_1)$, hence
    $\|E_\Omega(\cdot,z)_{|\Omega\setminus\overline{B}_1}\|_{L^2(\Omega\setminus\overline{B}_1)}$ is bounded  above by a constant depending on these quantities and
    on $m(\Omega)$. Thus, putting $r_1:=\min\{\mathrm{dist}(B,\partial\Omega)/2,
      \mathrm{dist}(z,\partial B)/4)\}$ and 
      $O$ for the  $r_1$-neighborhood of
    $\partial B$,  we get from the Cacciopoli inequality
    \cite[Lemma 2.105]{MalZie} and a compactness argument
    that $\|E_\Omega(\cdot,z)\|_{W^{1,2}(O)}$
    is bounded above in terms of the previous parameters. Thus,
    $\|E_\Omega(\cdot,z)\|_{W^{1/2,2}(\partial B)}$ is controlled by these parameters and the radius of $B$, by continuity of the trace operator and homogeneity of the derivative. 
     Subsequently, letting 
     $S$ be the outer boundary sphere of
     the shell $V:=(\Omega\setminus\overline{B})\cap O$,
    we can use the extension theorem to find $\Phi\in W^{1,2}(V)$ whose trace is
    $-E_\Omega(\cdot,z)_{|\partial B}$ on $\partial B$ and $0$ on $S$. By homogeneity of the derivative, 
      $\Phi$ (prolongated by zero on $\Omega\setminus(O\cup B)$) satisfies
      $\|\Phi\|_{W^{1,2}(\Omega\setminus\overline{B}))}\leq C\|E_\Omega(\cdot,z)_{|\partial B}\|_{W^{1/2,2}(\partial B)}$ where $C$ depends only on the radii of $B$, $S$, and standard estimates for the Dirichlet problem  now show that
      $\|E_\Omega(\cdot,z)\|_{W^{1,2}(\Omega\setminus\overline{B})}$ is bounded above
      in terms of the announced quantities, see \cite[Corollary 8.7]{GiTru}.


    Next, if   
    $z^\prime\to z$ in $B_1$ then  $\|E_\Omega(\cdot,z^\prime)-E_\Omega(\cdot,z)\|_{L^\infty(\Omega\setminus\overline{B}_1)}\to0$ by \cite[Theorem (1.9)]{GruWid} and the symmetry of $E$, 
    so  Cacciopoli's inequality together with
    the compactness of $\partial B$ imply
 that $\|E_\Omega(\cdot,z^\prime)-E_\Omega(\cdot,z)\|_{W^{1,2}(O)}\to0$,
    whence  $\|E_\Omega(\cdot,z^\prime)-E_\Omega(\cdot,z)\|_{W^{1/2,2}(\partial B)}\to0$ by continuity of the trace operator. So, the restriction to $\Omega\setminus\overline{B}$ of $E_\Omega(\cdot,z^\prime)-E_\Omega(\cdot,z)$ is a weak solution to
    $\mathcal{L}u=0$ with boundary values vanishing on $\partial\Omega$ and
    tending to zero in $W^{1/2,2}(\partial B)$
    as $z^\prime\to z$. Thus,
    the same estimates as before  (\cite[Corollary 8.7]{GiTru}) 
    imply that $\|E_\Omega(\cdot,z^\prime)-E_\Omega(\cdot,z)\|_{W^{1,2}(\Omega\setminus\overline{B})}\to0$ when  $z^\prime\to z$.
    \end{proof}

    Hereafter, when restricting  $\mathcal{L}$ to some ball $B(0,R)\subset\Omega$, we put for simplicity $E_R(x,y)$ instead of $E_{B(0,R)}(x,y)$. 
    The  next lemma connects $E_{R_1}(x,y)$ and $E_{R_2}(x,y)$ when $R_1\leq R_2$.
    \begin{lemma}
      \label{inclusE}
      Let $\mathcal{L}$ be an elliptic operator satisfying \eqref{genEllop}-\eqref{sell}  on a bounded  Lipschitz open set $\Omega\subset\R^N$. For $y\in B(0,R_1)\subset B(0,R_2)\subset\Omega$, the restriction $E_{R_2}(\cdot,y)_{|B(0,R_1)}$ is equal to $E_{R_1}(\cdot,y)+v_{R_1}$, where  $v_{R_1}\in W^{1,2}(B(0,R_1))$  is  the
        weak solution  to
        $\mathcal{L}v=0$ having the same trace as $E_{R_2}(\cdot,y)$  on the sphere $\mathbb{S}(0,R_1)$.
      \end{lemma}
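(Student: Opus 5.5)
The plan is to define $v_{R_1}$ as the difference $w:=E_{R_2}(\cdot,y)_{|B(0,R_1)}-E_{R_1}(\cdot,y)$, show it is a $W^{1,2}(B(0,R_1))$ weak solution of $\mathcal{L}v=0$, and then identify its trace.

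The difficulty is that both Green functions lie only in $W^{1,p}$, $p<\frac{N}{N-1}$, near the pole. Their difference, however, solves the equation with zero right hand side in the distributional sense. Indeed, for $\varphi\in C_c^\infty(B(0,R_1))$, both $E_{R_1}(\cdot,y)$ and $E_{R_2}(\cdot,y)$ are weak solutions in $W^{1,p}$ to $\mathcal{L}u=\delta_y$ (by \cite[Theorem (1.1)]{GruWid}, as recalled above). Hence
\[\int(\nabla w)^t\bs{\Sigma}\nabla\varphi\,dm=\varphi(y)-\varphi(y)=0.\]
So $w\in W^{1,p}(B(0,R_1))$ is a weak solution of $\mathcal{L}w=0$. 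Since Serrin-type examples forbid concluding $W^{1,2}$ regularity from this alone, I would not rely on it. Instead I would use a comparison with an honest $W^{1,2}$ solution, as follows.

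First, I would build $v_{R_1}$ directly. Pick a ball $B'=B(y,r)$ with $\overline{B'}\subset B(0,R_1)$. By Lemma~\ref{reg2locb} applied in $\Omega=B(0,R_2)$, the function $E_{R_2}(\cdot,y)$ lies in $W^{1,2}(B(0,R_2)\setminus\overline{B'})$, so its trace on $\mathbb{S}(0,R_1)$ lies in $W^{1/2,2}$. Let $v_{R_1}\in W^{1,2}(B(0,R_1))$ be the unique weak solution of $\mathcal{L}v=0$ with that trace, given by \cite[Theorem 8.3]{GiTru}. Next, I would set $u:=E_{R_1}(\cdot,y)+v_{R_1}$ and show $u=E_{R_2}(\cdot,y)$ on $B(0,R_1)$.

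To this end I would use the very weak (duality) characterization \eqref{vwes1} and \eqref{defvwm}, testing against $G_{R_1}(\psi)$ for $\psi\in C(\overline{B(0,R_1)})$. The cleanest route is to compare $E_{R_2}(\cdot,y)$ with the formula $E_{R_2}(x,y)=\int E_{R_2}(x,z)\,d\delta_y(z)$ together with symmetry, but I would rather approximate. Replace $\delta_y$ by mollified sources $\rho_\varepsilon\in C_c^\infty(B')$, where $\rho_\varepsilon\to\delta_y$. Set $u^\varepsilon_i:=G_{R_i}(\rho_\varepsilon)\in W^{1,2}_0(B(0,R_i))$. Then $u^\varepsilon_2-u^\varepsilon_1$ restricted to $B(0,R_1)$ is a genuine $W^{1,2}$ weak solution of $\mathcal{L}v=0$ whose trace equals that of $u_2^\varepsilon$ on $\mathbb{S}(0,R_1)$. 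Uniqueness in \cite[Theorem 8.3]{GiTru} therefore gives $u^\varepsilon_2=u^\varepsilon_1+v^\varepsilon$, where $v^\varepsilon$ is the harmonic ($\mathcal{L}$-) extension of the trace of $u_2^\varepsilon$.

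Now let $\varepsilon\to0$. By \eqref{defvwm} as a Bochner integral and the continuity of $z\mapsto E_{R_i}(\cdot,z)$ into $W^{1,q}_0$, we get $u_i^\varepsilon\to E_{R_i}(\cdot,y)$ in $W^{1,q}$. By Lemma~\ref{reg2locb} (continuity of $z\mapsto E_{R_2}(\cdot,z)$ in $W^{1,2}(B(0,R_2)\setminus\overline{B'})$), the traces of $u_2^\varepsilon$ on $\mathbb{S}(0,R_1)$ converge in $W^{1/2,2}$ to the trace of $E_{R_2}(\cdot,y)$. Hence $v^\varepsilon\to v_{R_1}$ in $W^{1,2}(B(0,R_1))$ by \cite[Corollary 8.7]{GiTru}. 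Passing to the limit in $L^1$ yields the identity.

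The main obstacle is justifying the convergence $u_2^\varepsilon\to E_{R_2}(\cdot,y)$ in $W^{1,2}$ away from the pole. This requires writing $u_2^\varepsilon=\int E_{R_2}(\cdot,z)\rho_\varepsilon(z)\,dz$ as a Bochner integral valued in $W^{1,2}(B(0,R_2)\setminus\overline{B'})$. That in turn uses the uniform bound and continuity from Lemma~\ref{reg2locb} for $z\in\supp\rho_\varepsilon\subset B'$, together with the fact that this vector integral agrees with the pointwise one by Hille's theorem.
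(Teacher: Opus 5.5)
Your proof is correct, but it takes a different route from the paper's.

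\textbf{The paper's route.} The paper starts from the observation you make first: $H:=E_{R_2}(\cdot,y)_{|B(0,R_1)}-E_{R_1}(\cdot,y)-v_{R_1}$ is a weak solution of $\mathcal{L}u=0$ lying in $W^{1,p}_0(B(0,R_1))$ for $p<\frac{N}{N-1}$. When $\bs{\Sigma}$ is smooth, it shows $H\equiv0$ by citing Ancona's uniqueness theorem \cite{Ancona}. For general $\bs{\Sigma}$ it mollifies the \emph{coefficients} and passes to the limit. This uses four ingredients:
\begin{itemize}
\item weak convergence of the Green functions $E^{(s)}_{R_j}(\cdot,y)$ in $W^{1,p}_0$, from \cite[Theorem 5.4]{LSW};
\item the $s$-uniform $W^{1,2}$ bound away from the pole, from the first part of Lemma~\ref{reg2locb}, followed by extraction of weakly convergent subsequences;
\item weak continuity of the trace;
\item stability of the Dirichlet problem under uniform perturbation of $\bs{\Sigma}$.
\end{itemize}

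\textbf{Your route.} You keep the operator fixed and mollify the \emph{pole} instead. For smooth sources everything lives in $W^{1,2}$, where uniqueness of the Dirichlet problem is just Lax--Milgram. You then pass to the limit using only two facts: continuity of $z\mapsto E_{R_i}(\cdot,z)$ into $W^{1,q}_0$, and the continuity statement of Lemma~\ref{reg2locb} rather than its uniform bound.

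\textbf{Comparison.} Your version gives strong convergence throughout, needs no subsequence extraction, and avoids both \cite{Ancona} and \cite[Theorem 5.4]{LSW}. The paper's version, in exchange, obtains uniqueness of the $W^{1,p}_0$ solution $H$ directly when the coefficients are smooth.

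\textbf{One step to make explicit.} You should justify $G_{R_i}(\rho_\varepsilon)=\int E_{R_i}(\cdot,z)\rho_\varepsilon(z)\,dz$.
\begin{itemize}
\item A weak solution in $W^{1,2}_0$ is a very weak solution. To see this, test the equations for $G_{R_i}(\rho_\varepsilon)$ and $G_{R_i}(\psi)$ against each other and use the symmetry of $\bs{\Sigma}$.
\item Very weak solutions are unique.
\end{itemize}
The paper uses exactly this reasoning at \eqref{repvwdw}.
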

      Note that $E_{R_2}(\cdot,y)$ has a trace on $\mathbb{S}(0,R_1)$
      that lies in $W^{1/2,2}(\mathbb{S}(0,R_1))$, by Lemma \ref{reg2locb}. Hence, $v_{R_1}$
      uniquely exists by well-posedness of the Dirichlet problem in
      $W^{1,2}(B(0,R_1))$ with boundary data in  $W^{1/2,2}(\mathbb{S}(0,R_1))$
      (which follows for instance from \cite[Theorem 8.3]{GiTru} and
      the surjectivity of the trace $W^{1,2}(B(0,R_1))\to W^{1/2,2}(\mathbb{S}(0,R_1))$). The non-immediate  bit of Lemma \ref{inclusE} is that
       $E_{R_2}(\cdot,y)_{|B(0,R_1)}-E_{R_1}(\cdot,y)$ lies in $W^{1,2}(B(0,R_1))$.
      \begin{proof}
        By the discussion after \eqref{defvwm},
        $H:= E_{R_2}(\cdot,y)_{|B(0,R_1)}- E_{R_1}(\cdot,y)- v_{R_1}$ is a weak solution
        of $\mathcal{L}u=0$ that lies in $W^{1,p}_0(B(0,R_1))$ for $1\leq p<\frac{N}{N-1}$. If $\bs{\Sigma}\in C^\infty(\overline{\Omega})$, then $H\equiv0$
        by \cite[Theorem A5.1]{Ancona}. In the general case, consider 
the Green functions $E^{(s)}_{R_1}(\cdot,y)$ and  $E^{(s)}_{R_2}(\cdot,y)$  of a sequence  $\mathcal{L}^{(s)}$
of smooth mollifications  of $\mathcal{L}$,
as well as the sequence $v_{R_1}^{(s)}\in W^{1,2}(B(0,R_1))$ of weak solutions to
$\mathcal{L}^{(s)}v=0$ having the same trace as $E_{R_2}^{(s)}(\cdot,y)$  on  $\mathbb{S}(0,R_1)$. By \cite[Theorem 5.4]{LSW},   $E^{(s)}_{R_j}(\cdot,y)$
converges weakly to $E_{R_j}(\cdot,y)$ in $W_0^{1,p}(B(0,R_j))$ when $s\to+\infty$ for
$j=1,2$ and   $p\in(1,\frac{N}{N-1})$. Choose $\varepsilon>0$
with $\overline{B}(y,\varepsilon)\subset B(0,R_1)$. As the ellipticity constants  of $\bs{\Sigma}$ in \eqref{sell} also hold for the mollifications $\bs{\Sigma}^{(s)}$, 
Lemma \ref{reg2locb}  yields
that $\|E^{(s)}_{R_2}(\cdot,y)\|_{W^{1,2}(B(0,R_2)\setminus \overline{B}(y,\varepsilon))}$ is bounded  independently of $s$, hence some subsequence
converges weakly to
$E_{R_2}(\cdot,y)$ in $W^{1,2}(B(0,R_2)\setminus \overline{B}(y,\varepsilon))$.
Since  the trace map
$W^{1,2}(B(0,R_2)\setminus \overline{B}(y,\varepsilon))\to W^{1/2,2}(\mathbb{S}(0,R_1))$ is linear and continuous, it is weakly continuous so
we may assume that  $E_{R_2}^{(s)}(\cdot,y)_{|\mathbb{S}(0,R_1)}$ converges weakly to $E_{R_2}(\cdot,y)_{|\mathbb{S}(0,R_1)}$ in
$W^{1/2,2}(\mathbb{S}(0,R_1))$. Then, classical estimates for the Dirichlet problem imply that $v_{R_1}^{(s)}$ converges weakly  to $v_{R_1}$ in $W^{1,2}(B(0,R_1))$ 
when $s\to \infty$\footnote{By right invertibility of the trace operator there is $h^{(s)}$ with   trace $E_{R_2}^{(s)}(\cdot,y)_{|\mathbb{S}(0,R_1)}$
converging weakly in $W^{1,2}(B(0,R_1))$ to $h$ with trace $E_{R_2}(\cdot,y)_{|\mathbb{S}(0,R_1)}$. Trading  $v_{R_1}^{(s)}$ for $v_{R_1}^{(s)}-h^{(s)}$
and $v_{R_1}$ for $v_{R_1}-h$,  one is left to check that the weak solution
$u^{(s)}$ to
  $\mathrm{div}(\bs{\Sigma}^{(s)}\nabla u)=\mathrm{div}(\bs{\Sigma}\nabla h^{(s)})$ in $W^{1,2}_0(B(0,R_1))$ converges weakly to the one of
  $\mathrm{div}(\bs{\Sigma}\nabla u)=\mathrm{div}(\bs{\Sigma}\nabla h)$, say $u^*$. By
  \cite[Corollary 8.7]{GiTru} $u^{(s)}$ is  bounded in $W^{1,2}_0(B(0,R_1))$ independently of $s$, therefore we may assume that it converges weakly there
  to some $w$ as $s\to\infty$. Because 
  $\bs{\Sigma}^{(s)}$ converges uniformly to $\bs{\Sigma}$, it follows from \eqref{wes1} and the definition of weak convergence that $w=u^*$.}.
Taking now the weak limit of
 $H^{(s)}:= E^{(s)}_{R_2}(\cdot,y)_{|B(0,R_1)}- E^{(s)}_{R_1}(\cdot,y)- v^{(s)}_{R_1}$
in $W^{1,p}(B(0,R_1))$ yields that  $E_{R_2}(\cdot,y)_{|B(0,R_1)}-E_{R_1}(\cdot,y)-v_{R_1}(\cdot)=0$, by the smooth case already proven.
      \end{proof}
       
    We turn to the case $\Omega=\R^N$, for which
        the definition of very weak solutions needs adjustment. Recall that
    $Y^{1,2}(\R^N)=Y^{1,2}_0(\R^N)$ and let us simply write
    $G$ instead of $G_{\R^N}$ for the Green operator.
    When $g\in C_c(\R^N)$,
    we contend that 
    $G(g)$ is (locally H\"older) continuous. Indeed,
    on a ball $B(0,R)$ large enough to contain $\supp g$,
    the  function $G(g)$  differs from 
     $G_{B(0,R)}(g)\in W^{1,2}_0(B(0,R))$ by a weak solution $w\in W^{1,2}(B(0,R))$ of $\mathcal{L}w=0$. The latter is H\"older continuous  on $B(0,R/2)$ by
     De Giorgi-Nash estimates,
    while $G_{B(0,R)}(g)$ is H\"older continuous on $B(0,R)$
    \cite[Theorem 8.29]{GiTru}. 
Thus, $\psi$ is (H\"older)  continuous on $B(0,R/2)$, which proves our contention since $R$ was arbitrary large.
Now, for $\nu$ a finite measure compactly supported in $\mathcal{M}(\R^N)$,
we define a \emph{very weak solution} vanishing at $\infty$ 
of $\mathcal{L}u=\nu$ 
to be a $u\in L^1_{loc}(\R^N)$ such that
\begin{equation}
  \label{vwes12}
  \int ug\,dm=\int G(g)d\nu,\quad g\in C_c(\R^N).
    \end{equation}
   By the previous contention $G(g)$ is continuous, therefore it is integrable against $\nu$ and  
\eqref{vwes12} makes sense.
Clearly, a  very weak solution
is unique when it exists.

The next theorem deals with  Green functions  on $\R^N$.

      \begin{theorem}\label{kerell}
        Let 
                $\mathcal{L}$ be the elliptic operator from {\rm{\eqref{genEllop}}} with
        $\bs{\Sigma}:\R^N\to \R^{N\times N}$ a  symmetric measurable map satisfying  {\rm{\eqref{sell}}}. There exists a function $E:\R^N\times\R^N\to [0+\infty]$ such that
  \begin{itemize}
  \item[(i)]$E(x,y)=E(y,x)$ for $(x,y)\in\R^N\times\R^N$;
  \item[(ii)] It holds for some constant $C_0=C_0(\mathcal{L})>0$  and all $x,y\in \R^N$ that
    \begin{equation}
      \label{compNew}
      C_0^{-1} |x-y|^{2-N}\leq E(x,y)\leq C_0|x-y|^{2-N};
      \end{equation}
      \item[(iii)] $E:\R^N\times\R^N\setminus\mathfrak{D}\to\R$ is locally H\"older-continuous, where $\mathfrak{D}:=\{(x,x)\in\R^N\times\R^N\}$.
      \item[(iv)] $y\mapsto E(\cdot,y)$ is continuous from
      $\R^n$ into $W_{loc}^{1,p}(\R^N)$ for $1\leq p<\frac{N}{N-1}$;
    \item[(v)] if $O\subset\R^N$ is open, then $y\mapsto E(\cdot,y)$ is continuous from
      $O$ into $W_{loc}^{1,2}(\R^N\setminus \overline{O})$;
    \item[(vi)] if $\nu$ is a compactly supported measure in $\mathcal{M}(\R^N)$,
      then
      \begin{equation}
        \label{vwsR}
        u(x):=\int E(x,y)d\nu(y), \qquad x\in\R^N,
      \end{equation}
      is the very weak solution to
      $\mathcal{L}u=\nu$ on $\R^N$, and it lies in
      $W_{loc}^{1,p}(\R^N)$ for $1\leq p<\frac{N}{N-1}$. The integral in
      \eqref{vwsR}
      is well-defined in that $E(x,\cdot)$ lies in $L^1(|\nu|)$ for a.e. $x$,
            and as a function of $x$ it may also be regarded as a Bochner integral, valued in  $W_{loc}^{1,p}(\R^N)$\footnote{
         Since we integrate continuous vector-valued functions on compact sets, \cite[Theorem 3.27]{Rudin} provides us with an adequate notion of Bochner integral.}.
          
      \end{itemize}
    \end{theorem}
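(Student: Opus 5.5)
We will obtain $E$ as a limit of the Green functions $E_R=E_{B(0,R)}$ of the balls $B(0,R)$ as $R\to\infty$, and derive the listed properties from the bounded-domain theory recalled above (\cite{LSW,GruWid}) together with Lemmas \ref{reg2locb} and \ref{inclusE}. By the maximum principle applied to $v_{R_1}$ in Lemma \ref{inclusE} (its trace is $E_{R_2}(\cdot,y)\ge0$), we get $E_{R_1}(x,y)\le E_{R_2}(x,y)$ for $R_1\le R_2$. Hence $E(x,y):=\lim_{R\to\infty}E_R(x,y)\in[0,+\infty]$ exists pointwise as a monotone limit. Symmetry (i) passes to the limit from $E_R(x,y)=E_R(y,x)$.

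For (ii) we use the Grüter--Widman bounds \cite[Theorem (1.1)]{GruWid}. The upper bound $E_R(x,y)\le K|x-y|^{2-N}$ holds with $K$ depending only on $N,c,C$, hence uniformly in $R$. Grüter--Widman also give a lower bound $E_R(x,y)\ge K^{-1}|x-y|^{2-N}$ whenever $|x-y|\le\frac12\mathrm{dist}(y,\partial B(0,R))$. For fixed $x,y$ this condition holds once $R$ is large, so monotonicity yields \eqref{compNew}.

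For (iii), the functions $E_R(\cdot,y)$ are weak solutions of $\mathcal{L}u=0$ away from $y$ and are uniformly bounded on $\{|x-y|\ge\delta\}$ by the above. De Giorgi--Nash then gives equicontinuous local Hölder bounds in $x$, and by symmetry also in $y$. The monotone limit is therefore locally Hölder on the complement of $\mathfrak{D}$.

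For (v), Caccioppoli's inequality with these uniform $L^\infty$ bounds controls $\|E_R(\cdot,y)\|_{W^{1,2}(K)}$ for compact $K$ away from $y$, uniformly in $R$. Weak compactness together with the pointwise limit shows $E(\cdot,y)\in W^{1,2}_{loc}$ off the pole and that it is a weak solution there. Continuity in $y\in O$ follows as in the proof of Lemma \ref{reg2locb}: local uniform convergence $E(\cdot,y')\to E(\cdot,y)$ (from (iii)) upgrades to $W^{1,2}$ convergence through Caccioppoli applied to differences. For (iv), we reuse the $W^{1,p}$ estimates of \cite[Theorem 5.1]{LSW}, namely weak-type $L^{N/(N-1)}$ bounds on $\nabla E_R(\cdot,y)$ depending only on the ellipticity constants. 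These give uniform local $W^{1,p}$ bounds near the pole and, combined with (v) away from it, continuity in $y$.

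For (vi), the integral is well defined because $\int\!\!\int_K|x-y|^{2-N}d|\nu|(y)\,dm(x)<\infty$ for every compact $K$ (Fubini), which also gives the Bochner interpretation via (iv) as in the bounded case. To verify \eqref{vwes12}, take $g\in C_c(\R^N)$; Fubini and symmetry give
\[\int u g\,dm=\int\Big(\int E(x,y)g(x)\,dm(x)\Big)d\nu(y),\]
so it suffices to show $\int E(\cdot,y)g\,dm=G(g)(y)$ for every $y$. We have $\int E_R(\cdot,y)g\,dm=G_{B(0,R)}(g)(y)$ by \eqref{defvwm}, and monotone/dominated convergence handles the left side. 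The main obstacle is the right side: proving $G_{B(0,R)}(g)\to G(g)$ pointwise, where $G(g)$ is the $Y^{1,2}(\R^N)$ Lax--Milgram solution. Our plan is to note that $G_{B(0,R)}(g)$, extended by zero, belongs to $Y^{1,2}_0(\R^N)$ with gradient norm bounded uniformly in $R$ (test the equation against itself and use Sobolev). Any weak limit solves the $\R^N$ problem, so by uniqueness the whole family converges weakly in $Y^{1,2}$ to $G(g)$. De Giorgi--Nash local uniform Hölder estimates on compacts then upgrade this to locally uniform convergence, which gives pointwise convergence at every $y$.
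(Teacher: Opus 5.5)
Your proposal is correct. It has the same overall structure as the paper's proof: monotone exhaustion by the $E_R$, the Gr\"uter--Widman bounds, De Giorgi--Nash and Caccioppoli for (iii) and (v), and identifying the weak $Y^{1,2}$-limit of $G_{B(0,R)}(g)$ with $G(g)$ in (vi). It departs from the paper at three points.

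\textbf{Monotonicity and (i)--(ii).} You derive the monotonicity in $R$ and (i)--(ii) yourself, from Lemma~\ref{inclusE}, the weak maximum principle applied to $v_{R_1}$, and the $R$-independent bounds of \cite[Theorem (1.1)]{GruWid}. The paper simply quotes \cite[Section 7]{LSW} for these.

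\textbf{Item (iv).} You combine $R$-uniform weak-$L^{N/(N-1)}$ bounds on $\nabla E_R(\cdot,y)$ near the pole with (v) away from it. The paper instead lets $R'\to\infty$ in Lemma~\ref{inclusE} to get the representation \eqref{reppp}. It then uses continuity of $y\mapsto E_R(\cdot,y)$ in $W^{1,p}$ and of the Dirichlet problem with respect to its $W^{1/2,2}$ boundary data. Your route is more direct, while the paper's produces \eqref{reppp}, which it reuses in (vi) and again later to prove \eqref{bderpE}.

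For continuity, a uniform $W^{1,p}$ bound near the pole is not enough on its own. What you actually use is that, since $p<\frac{N}{N-1}$, the weak-type estimate gives $\|\nabla E(\cdot,y')\|_{L^p(B(y,\delta))}\le C\delta^{N/p-N+1}$ uniformly in $y'$. This bound survives the weak limit $R\to\infty$ by lower semicontinuity. Together with \eqref{compNew}, it makes the contribution of a small ball around the pole uniformly small. The $R$-independence of the weak-type gradient bound is explicit in \cite[Theorem (1.1)]{GruWid}, so cite that rather than \cite[Theorem 5.1]{LSW}.

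\textbf{Item (vi).} You get locally uniform convergence $G_{B(0,R)}(g)\to G(g)$ from three ingredients: the uniform $L^{2^*}$ bound, interior De Giorgi--Nash--Moser estimates for the inhomogeneous equation, and Arzel\`a--Ascoli. Subsequential uniform limits are then identified with the weak limit by continuity of $G(g)$. The paper instead reduces to $g\ge0$, uses monotonicity of $\widetilde{\phi}_R$ in $R$ and monotone convergence in $L^{2^*}$, and concludes with Dini's theorem. Your version avoids any positivity argument.

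Your final passage to the limit is also lighter than the paper's. You argue pointwise in $y$ by dominated convergence, then apply Fubini against $\nu$. The paper instead uses convergence of $E(\cdot,y)-\widetilde{E}_R(\cdot,y)$ to zero in $W^{1,p}_{loc}$, locally uniformly in $y$, together with \cite[Theorem 3.27]{Rudin}.
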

    Note that \eqref{vwsR} makes $E$ unique, for $E(\cdot,y)$
    is the very weak solution 
    of $\mathcal{L}u=\delta_y$.
    \begin{proof}
      That $E_R(\cdot,y)$ increases locally uniformly, when $R\to+\infty$,
      to a function $E(\cdot.y)$ valued in $W_{loc}^{1,p}(\R^N)$ for $1\leq p<\frac{N}{N-1}$ that  satisfies $(i)$ and $(ii)$        
      is stated in \cite[Section 7]{LSW}\footnote{The first version of this work at         https://inria.hal.science/hal-05779520 contains a detailed proof.}.  {\it A fortiori} then
      $E_n(\cdot,y)$ is a Cauchy sequence in $L^2_{loc}(\R^N\setminus\{y\})$, therefore also in $W^{1,2}_{loc}(\R^N\setminus\{y\})$ by the Caccioppoli inequality. Thus, $E_R(\cdot,y)$ converges to $E(\cdot,y)$ in $W^{1,2}_{loc}(\R^N\setminus\{y\})$, in particular  $E(\cdot,y)$
        is a weak solution of $\mathcal{L}u=0$ in $\R^N\setminus\{y\}$.  
        Let $B,B_1\subset\R^N$ be two balls  with                     $\overline{B}\cap \overline{2B}_1=\emptyset$, and pick
                        $R$ such that
        $B(0,R)\supset \overline{2B}_1\cup\overline{B}$.   
        From  De Giorgi-Nash estimates  we get
        that $|E_R(x_1,y)-E_R(x_2,y)|\leq C_5 |x_1-x_2|^\alpha$
        for $x_1,x_2\in \frac{1}{2}B_1$ and $y\in B$, where $\alpha$ and $C_5$ depend on the
        ellipticity constants and the radius of $B_1$ as well as on
        $\|E_R(\cdot,y)\|_{L^\infty(B_1)}$  which is uniformly bounded, by (ii) 
        and the inequality $E_R(\cdot,y)\leq E(\cdot,y)$; see \cite[Theorem 8.22]{GiTru}. Likewise,
        recalling (i), the same estimates imply that $|E_R(x,y_1)-E_R(x,y_2)|\leq C_6 |y_1-y_2|^\beta$
        for $x\in 2B_1$ and $y_1,y_2\in B$, where $\beta$ and $C_6$ depend on the
        ellipticity constants and the radius of $B$ as well as on
        $\mathrm{dist}(2B_1,B)$. Letting $R$ tend to $\infty$ yields both (v) and (iii).

        Furthermore, for  $R^\prime>R>|y|$, Lemma \ref{inclusE} implies that
        \begin{equation}
          \label{dilun}
          E_{R^\prime}(\cdot,y)_{|B(0,R)}=E_{R}(\cdot,y)+v_{R}^{(R^\prime)}(\cdot),
        \end{equation}
        with $v_{R}^{(R^\prime)}$   the
        weak solution in $W^{1,2}(B(0,R))$ of $\mathcal{L}v=0$ which has the same trace as
        $E_{R^\prime}(\cdot,y)$  on  $\mathbb{S}(0,R)$.
        By the previous step, $E_{R^\prime}(\cdot,y)_{|B(0,R)\setminus\overline{B}(y,\varepsilon)}$ tends
        inductively to $E(\cdot,y)_{|\R^n\setminus\overline{B}(y,\varepsilon)}$ in
        $W^{1,2}_{loc}(\R^N\setminus\overline{B}(y,\varepsilon))$ when  $R^\prime\to\infty$, for fixed $\varepsilon<R-|y|$. Taking the limit in \eqref{dilun}, we get by continuity of solutions of the Dirichlet problem with 
respect to boundary conditions that
\begin{equation}
  \label{reppp}
  E(\cdot,y)_{|B(0,R)}=E_{R}(\cdot,y)+w_{R}(\cdot),
  \end{equation}
where $w_{R}\in  W^{1,2}(B(0,R))$  is the weak solution to
        $\mathcal{L}w=0$ having the same trace as $E(\cdot,y)$  on $\mathbb{S}(0,R)$. Relation \eqref{reppp} implies at once that
$E(\cdot,y)\in W^{1,p}_{loc}(\R^N)$ for
$p\in[1,\frac{N}{N-1})$. It also entails that
$y\mapsto E(\cdot,y)$ is continuous
from $\R^N$ into $W^{1,p}_{loc}(\R^N)$, because we stressed after \eqref{vwes1} that $y\mapsto E_{R}(\cdot,y)$ is continuous $B(0,R)\to W^{1,p}(B(0,R))$ and we know from (v) that
$y\mapsto E(\cdot,y)$ is continuous $B(y_0,\varepsilon)\to W^{1,2}(B(0,R)\setminus\overline{B}(y_0,\varepsilon))$ for $y_0\in B(0,R)$ and $\varepsilon$ small enough,
whence the trace $E(\cdot,y)_{|\mathbb{S}(0,R)}$ depends continuously of
$y\in B(y_0,\varepsilon)$ in $W^{1/2,2}(\mathbb{S}(0,R))$ and we can appeal again  to the continuity of solutions to the Dirichlet problem in $W^{1,2}(B(0,R))$ with respect to boundary conditions.
 This proves (iv).


 To establish (vi), fix $g\in C_c(\R^N)$ and $R_0$ such that
 $\mathrm{supp}\,g\subset B(0,R_0)$, so we may  regard $g$ as a member of $C_c(B(0,R))$ for $R\geq R_0$. Put for simplicity 
$\phi_R:= G_{B(0,R)}(g)$ and $\widetilde{\phi}_R$  for  its extension by zero to
  the whole of $\R^N$, which lies in $W^{1,2}(\R^N)$.
  Let also  $\psi:=G(g)\in Y^{1,2}(\R^N)$.
\emph{We claim} that
  $\widetilde{\phi}_R\to\psi$  as $R\to\infty$, 
  locally uniformly on $\R^N$. To see this, observe
from  \eqref{sell} and the definition of  $\widetilde{\phi}_R$ that for $R\geq R_0$ it holds
  \begin{align}
    \nonumber
    \|\nabla\widetilde{\phi}_R\|^2_{L^2(\R^N)}=\|\nabla\phi_R\|^2_{L^2(B(0,R))}\leq c^{-1}\int_{B(0,R)}(\nabla\phi_R)^t\bs{\Sigma}\nabla\phi_R=-c^{-1}\int_{B(0,R)}g\phi_R\\
   \nonumber
    \leq c^{-1}\|g\|_{L^{\frac{2N}{N+2}}(B(0,R))}\|\phi_R\|_{L^{\frac{2N}{N-2}}(B(0,R))}\leq  \frac{C_S}{c}\|g\|_{L^{\frac{2N}{N+2}}(B(0,R))}\|\nabla\phi_R\|_{L^2(B(0,R))}\\
    =\frac{C_S}{c}\|g\|_{L^{\frac{2N}{N+2}}(\R^N)}\|\nabla\widetilde{\phi}_R\|_{L^2(\R^N)},\label{chderf}
\end{align}                                                
where the second equality uses definition \eqref{wes1}
and the fact  that $\Phi_R$ lies in $W^{1,2}_0(B(0,R))$,
while the second inequality is H\"older's and the last follows from 
Sobolev's inequality with $C_S$ a dimensional constant, see \cite[Corollary 1.57]{MalZie}. The estimate \eqref{chderf} entails that
$\|\nabla\widetilde{\phi}_R\|_{L^2(\R^N)}$ is uniformly bounded with respect to $R$, hence from any sequence $R_n$ tending to $\infty$ we can extract a subsequence $R_{k_n}$ such that $\widetilde{\phi}_{R_{k_n}}$ converges weakly  to some $\Phi$ in $Y^{1,2}(\R^N)$; we endow $Y^{1,2}(\R^N)$ with  the equivalent norm $\|f\|^2_{\bs{\Sigma}}:=\int(\nabla f)^t\bs{\Sigma}\nabla f dm$. For each
$\varphi\in C^\infty_c(\R^N)$ and $n$  so
large that $B(0,R_{k_n})\supset \supp g\cup \supp\varphi$, we get by definition of weak
solutions that
\[
  \int_{\R^N}(\nabla \psi)^t\bs{\Sigma}\nabla \varphi=-  \int_{\R^N}g\varphi
  =\int_{B(0,R_{k_n})}(\nabla \phi_{R_{k_n}})^t\bs{\Sigma}\nabla \varphi
  =\int_{\R^N}(\nabla \widetilde{\phi}_{R_{k_n}})^t\bs{\Sigma}\nabla \varphi,
\]
and since the rightmost term tends to
$\int_{\R^N}(\nabla \Phi)^t\bs{\Sigma}\nabla \varphi$ by weak convergence,
we conclude that $\psi=\Phi$ because $ C^\infty_c(\R^N)$ is dense in $Y^{1,2}(\R^N)$. Thus, 
$\psi$  is the weak limit of $\widetilde{\phi}_R$ in $Y^{1,2}(\R^N)$ as $R\to+\infty$, and we record for later use that  weak convergence  $\widetilde{\phi}_R\to \psi$  also holds in $L^{2^*}(\R^N)$ with $2^*=2N/(N-2)>1$, since
$Y^{1,2}(\R^N)$ imbeds continuously in  $L^{2^*}(\R^N)$. To show that
$\psi$ is also the pointwise limit of  $\widetilde{\phi}_R$, locally uniformly in $\R^N$, it is enough to consider the case where $g\geq0$. Then, since $\phi_R$ is {\it a fortiori} a very weak solution to $\mathcal{L}\phi=g$ in $B(0,R)$ for $R\geq R_0$, the formula
\begin{equation}
  \label{repvwdw}
  \phi_R(x)=\int E_R(x,y)g(y)dm(y),\qquad x\in B(0,R),
  \end{equation}
implies that
$\widetilde{\phi}_R$ increases with $R$. Since it is bounded in $Y^{1,2}(\R^N)$, by \eqref{chderf}, it is also bounded in $L^{2^*}(\R^N)$ and by monotone convergence $\widetilde{\phi}_R$ converges in $L^{2^*}(\R^N)$
to some  $\Upsilon$. In particular, from any sequence $R_n$ tending to $\infty$ we can extract a subsequence $R_{k_n}$ such that $\widetilde{\phi}_{R_{k_n}}$ converges pointwise a.e.  to $\Upsilon$; 
necessarily $\Upsilon=\psi$ because the weak limit must be the pointwise limit when both exist.
  Finally, since $\psi=G(g)$ and $\widetilde{\phi}_{R_{k_n}}=G_{B(0,R_{k_n})}(g)$ are continuous,
  the pointwise convergence is locally uniform by Dini's theorem.
  \emph{This proves the claim}.

  Now, define $u$ as in \eqref{vwsR}.
  That it may be construed as a Bochner integral
    in $W^{1,p}_{loc}(\R^N)$ for $p\in[1,\frac{N}{N-1})$
  can be argued as in the case of \eqref{defvwm}, using (iv)
  and the compactness of $\supp\,\nu$.
  Put $\widetilde{E}_R(\cdot,y)$ for the extension by $0$
  to $\R^N$ of $\widetilde{E}_R(\cdot,y)$, and
pick $R_0$ large enough  that $\supp\nu\subset B(0,R_0)$.
Observing from \eqref{reppp} and the discussion before it
that $E(\cdot,y)-\widetilde{E}_{R}(\cdot,y)$
converges to zero in $W^{1,2}_{loc}(\R^N)$ and therefore also in  $W^{1,p}_{loc}(\R^N)$, locally uniformly with respect to $y$   when $R\to+\infty$, we deduce from  \cite[Chapter 3, Theorem 27]{Rudin} that
    \begin{align}
    \nonumber
    \int_{\R^N} u(x)g(x)dx&=\int\left(\int_{\R^N} g(x)E(x,y)dx\right)d\nu(y)=
                    \lim_{R\to\infty}\int\left(\int_{\R^N}g(x)\widetilde{E}_R(x,y)dx\right)d\nu(y)\\
      \nonumber                  &=\lim_{R\to\infty}\int\left(\int_{B(0,R)}g(x)E_R(x,y)dx\right)d\nu(y)= \lim_{R\to\infty}\int_{B(0,R)}\!\!\!\!\!\!G_{B(0,R)}(g)(y)d\nu(y)\\
                          &=\int G(g)(y)d\nu(y),\label{vwsolRN}
    \end{align}
    where the next-to-last inequality uses \eqref{repvwdw} and (i),
    while the last follows from an earlier claim
    that $G_{B(0,R)}(g)$ converges  to $G(g)$ locally uniformly as $R\to\infty$.
    Equation \eqref{vwsolRN} tells us that
$u$ is the very weak solution to $\mathcal{L}u=\nu$ in $\R^N$. Moreover,
$\int_{\R^N} E(x,y)d\nu(y)$ converges absolutely for a.e. $x$,
as follows from Fubini's theorem when we restrict to $g\geq0$ and $\nu\geq0$ in
\eqref{vwsolRN}, which is enough to get the same conclusion for general $g$ and $\nu$.
\end{proof}  

The function $E(\cdot,y)$ in  Theorem \ref{kerell} is called 
 the \emph{Green function} (or  \emph{fundamental solution}) of
 $\mathcal{L}$ in $\R^N$ with pole at $y$. We observe from Theorem \ref{kerell}
 points $(i)$  and $(vi)$ 
 together with  the definition \eqref{wes1} of very weak solutions and Fubini's theorem  that
 $G(g)(x)=\int E(x,y)g(y)dm(y)$ for $g\in C_c(\R^N)$.
 \subsection{Equation $\nabla\cdot(\bs{\Sigma}\nabla u)=\mathrm{div}\bs{\mu}$ with $\bs{\mu}$ a $\R^N$-valued  measure.}
 \label{EEae}
 The Green function  is instrumental to solve $\mathcal{L}u=\nu$ for $\nu$ a measure, but
our interest presently lies with solutions to 
$\mathcal{L}\phi =\mathrm{div}\,\bs{\mu}$ when $\bs{\mu}$ is a $\R^N$-valued 
compactly supported measure. In view of \eqref{vwsR}, one may surmise that
the solution
vanishing at infinity
can be written as 
      \begin{equation}
        \label{vwsRv1p}
        \Phi(\bs{\mu})(x)=\Phi(x)=\int \nabla_y E(x,y)\cdot d\bs{\mu}(y), \qquad \mathrm{a.e.\ }x\in\R^N.
           \end{equation}
      However, if $\bs{\Sigma}$ is  simply measurable and uniformly elliptic,
      then the very definition of \eqref{vwsRv1p} raises issues that we shall
      not take up here.
      Instead, we  shall assume that $\bs{\Sigma}$  not only satisfies
      {\rm \eqref{sell}} but also  restricts  to  a Dini-continuous
      function on an open  neighborhood $V$ of $\supp\bs{\mu}$.
          Then it is not difficult to
          make sense of \eqref{vwsRv1p} as follows.

      Under this assumption,
      weak solutions  of $\mathcal{L}u=0$ 
      on an open set $W\subset V$ belong to $C^1(W)$ if they lie in
      $W^{1,2}_{loc}(W)$, and when
      $\overline{W}\subset V$ then the modulus of continuity of their derivatives in $W$
      is controlled by $\omega_{\bs{\Sigma}_{|V}}$, the distance from
      $\partial W$ to $\partial V$, the  constants
      in \eqref{sell} and the dimension \cite[Theorem 1.1]{YLi}. In particular,
      for $O$ an open set such that $\overline{O}\cap\supp\bs{\mu}=\emptyset$
      and  $W$ open with $\overline{W}\subset V$, the restriction to $W\setminus\overline{O}$ of
      $y\mapsto E(x,y)$ lies in $C^1(W\setminus\overline{O})$ and its gradient has locally uniform modulus of continuity with respect to $x\in O$.
                Hence, by Theorem \ref{kerell}
                points (i), (ii)  and (v), we get since $L^2$-convergent sequences have subsequences converging pointwise a.e. that if $x\to x_0$ in  $O$ then some subsequence of $y\mapsto\nabla_y E(x_n,y)$ converges pointwise a.e. to $y\mapsto\nabla_yE(x_0,y)$  on $W\setminus\overline{O}$, and that convergence also holds in $L^\infty_{loc}(W\setminus\overline{O})$  by
                local uniform continuity. Altogether,
                \begin{equation}
                  \label{contdy}
                  (x,y)\mapsto \left(E(x,y),\nabla_yE(x,y)\right)\quad\mathrm{lies\ in}\quad \left(C\left((\R^N\times V)\setminus\mathfrak{D}\right)\right)^{N+1}.
                \end{equation}
                For $y_0\in V$ and $U$ an open set, $y_0\not\in\overline{U}$, the membership \eqref{contdy}  and the fundamental theorem of calculus imply that the functions $u_{n,j}(\cdot):=\frac{E(\cdot,y_0+h_n\bb{e}_j)-E(\cdot,y_0)}{h_n}$,
                $h_n\in(0,\mathrm{dist}(y_0,U))$,  tend to 
                $\partial{y_j}E(\cdot,y_0)$ boundedly pointwise on $U$ when
                $h_n\to0$ for $j\in\{1,\cdots,N\}$. So, as $u_{n,j}$ is a weak solution  to $\mathcal{L}u=0$ in $W^{1,2}(U)$,
               Cacciopoli's inequality implies that it 
                converges in $W^{1,2}_{loc}(U)$ 
                to a  weak solution of $\mathcal{L}u=0$ which is but $\partial{y_j}E(\cdot,y_0)$.
                               Hence, by \eqref{contdy} and Cacciopoli's inequality  again, we get if $U$, $W$ are open in $\R^N$ with $\overline{W}\subset V$ that                \begin{equation}
                  \label{ws2der}
                  y\mapsto \partial_{y_j}E(\cdot,y)\quad\mathrm{is\ continuous\  from}\quad W\ \mathrm{into}\  W^{1,2}_{loc}(U)\quad\mathrm{provided\  that}\quad \overline{U}\cap \overline{W}=\emptyset.
                  \end{equation}

                    In another connection, let $B$  be a ball of radius $R$ with
                    $\overline{B}\subset V$.
     From  \cite[Theorem 3.3 point (iv) \& Theorem 3.4 point(ii)]{GruWid},   
      we get that 
$|\nabla_y E_{B}(x,y)|\leq c_1|x-y|^{1-N}$ for $x,y\in B$
              where the constant $c_1$ depends on $N$ and $c$, $C$
              in \eqref{sell}, along with  $\omega_{\bs{\Sigma}_{|V}}$
      and $R$. Assume without loss of generality that $B$ has center $0$, and recall from \eqref{reppp} the notation $w_{R}$
      that we rewrite $w_{R}^{(y)}$ to show  dependence on $y$. From \cite[Lemma 3.1]{GruWid} we get that $|\nabla w_{R}^{(y)}|\leq \frac{C_1}{R-|y|}\|w_{R}^{(y)}\|_{L^\infty(B)}$ where the constant $C_1$ depends on
      the same parameters as $c_1$. So, observing
      from \eqref{reppp} and the symmetry of $E_{R}$, $E$ that
      $w_{R}^{(y)}(x)=w_{R}^{(x)}(y)$, we get from the boundedness of
      $\|E(\cdot,y)\|_{L^\infty(\mathbb{S}(0,R))}$ for $y\in \frac{1}{2}B$ (see \eqref{compNew}) together with the maximum principle that 
      \begin{equation}
        \label{bderpE}
        |\nabla_y E(x,y)|\leq C^\prime|x-y|^{1-N},\qquad x,y\in \frac{1}{2}B,\qquad \overline{B}\subset V,
      \end{equation}
      where the constant $C^\prime=C^\prime(R,c,C,N,\omega_{\bs{\Sigma}_{|V}})$ is
      locally uniform with respect to $x,y$. 

     Now, if we set by convention $\partial_{y_j}E(x,y)=+\infty$
     when $x=y$ then the  functions $\partial_{y_j}E(\cdot,\cdot)$,
     valued in $\R\cup\{+\infty\}$, are Borel measurable
      on $\R^N\times V$ for $j\in\{1,\cdot,N\}$, since they are continuous off the diagonal by \eqref{contdy}.
     Let us  cover 
      $\supp\bs{\mu}$ with finitely many balls $(\frac{1}{4}B_k)_{1\leq k\leq m}$ of radius $\frac{1}{4} R_k$ with  $B_k\subset V$.  
      Then, for $y\in \frac{1}{4}B_k$, it  holds that $\frac{1}{4}B_k\subset \overline{B}(y,\frac{R_k}{2})\subset B_k\subset V$, hence  $B(y,\frac{R_k}{2})$ qualifies for $B$ in \eqref{bderpE} and
      a quick estimate using spherical coordinates yields
      \[
\int_{\frac{1}{4}B_k}|\partial_{y_j} E(x,y)|dm(x)\leq C^\prime_k\,\sigma_N\,R_k,
\]
where $\sigma_N$ is the surface of the unit sphere in $\R^N$ and  $C^\prime_k=C^\prime(\frac{R_k}{2},c,C,N,\omega_{\bs{\Sigma}_{|V}})$ is the constant appearing in \eqref{bderpE}. So, if we pick $R_0>0$ so large that $B(0,R_0)\supset\overline{V}$ and  integrate separately $\int_{B(0,R_0)}|\partial_{y_j} E(\cdot,y)|dm$ over 
      $\frac{1}{4}B_k$ and $B(0,R_0)\setminus \frac{1}{4}(B_k)$
      for fixed $y$, we get:
      \begin{align}
        \int \left(\int_{B(0,R_0)}|\partial_{y_j} E(x,y)|dm(x)\right) d|\mu_j|(y)\leq C^{\prime\prime}|\mu_j|(\R^N)<\infty\label{majpF}\\
        \quad\mathrm{with}\quad
        C^{\prime\prime}:=\sup_{y\in S}\|\nabla_yE(\cdot,y)\|_{L^\infty(B(0,R_0)\setminus
        \cup_k\frac{1}{4}B_k)}+\max_kC^\prime_k\sigma_NR_k.\label{defCpp}
      \end{align}
      Note that the $\sup_{y\in S}$ on the right hand side of  \eqref{defCpp} is
      finite because of \eqref{contdy}.
      Thus, by Fubini's theorem,  $\phi(x)$ in  \eqref{vwsRv1p}
      exists for a.e. $x\in B(0,R_0))$ as an absolutely convergent integral,
      hence also  for a.e. $x\in\R^N$ since $R_0$ was arbitrary large. Moreover, by \cite[Chapter II, Section 1 Theorem 2 \& Section 2 Theorem 2]{diestel1977vector}, $y\mapsto \partial_{y_j}E(\cdot,y)$  is $|\mu_j|$-measurable when regarded as a $L^p_{loc}(\R^N)$-valued function of $y$
      for  $1\leq p<\frac{N}{N-1}$
      and  so $\phi$ can  be regarded as a Bochner integral in that space.


      Next we show that
      $\lim_{|x|\to\infty}|\phi(x)|=0$. Since $\supp\bs{\mu}$ is compact, it is enough to prove that
      $\lim_{|x|\to\infty}|\nabla_y E(x,y)|=0$, locally uniformly with respect to $y\in S$. For this, fix $y_0\in S$ together with $r_0>0$ such that
      $B(y_0,r_0)\subset V$, and for $x\notin V$ observe that 
      $u(z):= E(x,z)$ is a solution of $\mathcal{L}u=0$ in $B(y_0,r_0)$.
      By \cite[Lemma 3.1]{GruWid}, we get that
      \begin{equation}
        \label{estfreeze}
        |\nabla u(z)|\leq K\frac{\|u\|_{L^\infty(B(y_0,r_0))}}{\mathrm{dist}(z,\mathbb{S}(y_0,r_0))}\max\left\{2,\frac{2r_0}{d_0}  \right\},\qquad z\in B(y_0,\frac{r_0}{2}),
      \end{equation}
      where $K$ is a  constant and
      \begin{equation}
        \label{evmod}
d_0:=\sup\left\{d: \int_0^d\frac{\omega_{\bs{\Sigma}}(t)}{t}dt  \leq\frac{1}{4K}\right\};
      \end{equation}
      see \cite[Equation (3.10)]{GruWid}. It is clear from \eqref{compNew}
      that $\|u\|_{L^\infty(B(y_0,r_0))}\to0$
      when $|x|\to\infty$, therefore \eqref{estfreeze} implies that
      $\lim_{|x|\to\infty}|\nabla_y E(x,y)|=0$, uniformly for $y\in B(y_0,\frac{r_0}{2})\cap S$.

In analogy with \eqref{vwes12}, we say that $u$ is a \emph{very weak} solution
to $\mathcal{L}u=\mathrm{div}\,\bs{\mu}$ on $\R^N$ if
$u\in L^1_{loc}(\R^N)$ and 
\begin{equation}
  \label{vwes13}
\int  u\psi\,dm=\int  \nabla G(\psi)\cdot d\bs{\mu},\quad \psi\in C_c(\R^N).
\end{equation}
Note that \eqref{vwes13} makes sense when $\bs{\Sigma}$  is Dini-continuous
on an open  neighborhood $V$ of $\supp\bs{\mu}$,
as it follows from \cite[Remark 1.2]{YLi} that 
$G(\psi)$  is $C^1$-smooth in $V$ since $\bs{\Sigma}$  is Dini-continuous
there. Clearly, such a very weak solution is unique 
if it exists.

\begin{theorem}
        \label{soldim1} Let $\bs{\Sigma}:\R^N\to \R^{N\times N}$
        satisfy {\rm \eqref{sell}} and be Dini-continuous in an open set
         $V$. Let $\bs{\mu}\in(\mathcal{M}(\R^N))^N$ be a $\R^N$-valued measure compactly supported in $V$, and consider the differential operator $\mathcal{L}$ given by  {\rm\eqref{genEllop}} with  fundamental solution $E$. Then, 
                $\Phi$ defined a.e. by \eqref{vwsRv1p}
                lies in  $L^p_{loc}(\R^N)$ for $1\leq p<\frac{N}{N-1}$, and it                 is  the very weak solution to $\mathcal{L}u=\mathrm{div}\,\bs{\mu}$ on $\R^N$.  The function $\Phi$ is locally H\"older-continuous on $\R^N\setminus\supp\bs{\mu}$, it  tends to zero at infinity and the restriction $\Phi_{|\R^N\setminus\supp\bs{\mu}}$ is a weak solution to $\mathcal{L}\Phi=0$ in $W^{1,2}_{loc}(\R^N\setminus\supp\bs{\mu})$.
                Moreover, if $S\subset V$ is compact, the map $\bs{\mu}\mapsto \Phi_{|\R^N\setminus S}$ is sequentially continuous $\mathcal{M}(S)^N\to W^{1,2}_{loc}(\R^N\setminus S)$ when  $\mathcal{M}(S)^N$ receives the weak-star topology.
              \end{theorem}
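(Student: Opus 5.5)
The plan is to verify each assertion of Theorem~\ref{soldim1} in turn, using the estimates \eqref{contdy}, \eqref{ws2der}, \eqref{bderpE} and \eqref{majpF} established just before the statement.

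\emph{Local integrability.} Local $L^p$-membership of $\Phi$ for $1\leq p<\frac{N}{N-1}$ should follow from the Bochner-integral interpretation already discussed. By \eqref{bderpE}, $|\nabla_yE(\cdot,y)|\lesssim|\cdot-y|^{1-N}$ near $y$, and this is locally in $L^p$ precisely for $p<\frac{N}{N-1}$. Away from $y$ the gradient is bounded by \eqref{contdy}. The resulting $L^p(B(0,R_0))$-norms are bounded uniformly for $y\in\supp\bs{\mu}$, by the same covering argument as in \eqref{majpF}, now using Minkowski's inequality in place of Fubini's theorem.

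\emph{Very weak solution property.} To check \eqref{vwes13}, fix $\psi\in C_c(\R^N)$. Apply Fubini's theorem, justified by \eqref{majpF}, to get
\[
\int\Phi\psi\,dm=\int\Big(\int\nabla_yE(x,y)\psi(x)\,dm(x)\Big)\cdot d\bs{\mu}(y).
\]
It then suffices to show $\int\nabla_yE(x,y)\psi(x)\,dm(x)=\nabla G(\psi)(y)$ for $y\in V$. We know $G(\psi)(y)=\int E(x,y)\psi(x)\,dm(x)$ by the remark after Theorem~\ref{kerell}, so the task is to differentiate under the integral sign. I would write the difference quotients $\frac{E(x,y+h\bb{e}_j)-E(x,y)}{h}$ as $\frac1h\int_0^h\partial_{y_j}E(x,y+t\bb{e}_j)\,dt$, which is legitimate for $x$ off the segment by \eqref{contdy}. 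Integrating against $\psi$ and using Fubini together with the uniform $L^1_{loc}$ bound from \eqref{bderpE} gives
\[
\frac{G(\psi)(y+h\bb{e}_j)-G(\psi)(y)}{h}=\frac1h\int_0^h F_j(y+t\bb{e}_j)\,dt,\qquad F_j(z):=\int\partial_{y_j}E(x,z)\psi(x)\,dm(x).
\]
We need $F_j$ to be continuous in $z$. The contribution from $x$ near $z$ is small uniformly, by \eqref{bderpE} and the integrability of $|x-z|^{1-N}$. The contribution from $x$ away from $z$ is continuous by \eqref{contdy}. Letting $h\to0$ then yields $\partial_jG(\psi)=F_j$. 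This differentiation step is where I expect the main technical care to be needed, namely the uniform integrability near the diagonal.

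\emph{Behaviour off the support.} The decay at infinity was already proved above. For the weak-solution property on $\R^N\setminus\supp\bs{\mu}$, take open $U$ with $\overline U\cap\supp\bs{\mu}=\emptyset$ and choose $W\supset\supp\bs{\mu}$ with $\overline W\subset V$ and $\overline W\cap\overline U=\emptyset$. By \eqref{ws2der}, $y\mapsto\partial_{y_j}E(\cdot,y)$ is continuous from $W$ into $W^{1,2}_{loc}(U)$, so $\Phi_{|U}$ is a Bochner integral in $W^{1,2}_{loc}(U)$ of weak solutions of $\mathcal{L}u=0$. It is therefore itself a weak solution, since the functional $u\mapsto\int(\nabla u)^t\bs{\Sigma}\nabla\varphi$ commutes with Bochner integrals by Hille's theorem. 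Local Hölder continuity then follows from De Giorgi--Nash.

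\emph{Sequential weak-star continuity.} Let $\bs{\mu}_n\to\bs{\mu}$ weak-star in $\mathcal{M}(S)^N$. The sequence is norm-bounded, and for each compact $K\subset\R^N\setminus S$ the integrand $y\mapsto\nabla_yE(\cdot,y)$ is continuous from $S$ into the Banach space $W^{1,2}(\mathrm{int}\,K')$ for a slightly larger compact $K'\supset K$, hence uniformly continuous on the compact set $S$. Approximate it uniformly by finite sums $\sum_i\varphi_i(y)e_i$ with $\varphi_i\in C(S,\R^N)$ and $e_i$ fixed elements of $W^{1,2}(\mathrm{int}\,K')$, for example via a partition of unity. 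For each such sum the integrals converge as $n\to\infty$, and the approximation error is controlled by $\sup_n\|\bs{\mu}_n\|_{TV}$ times the uniform error. This gives $\Phi(\bs{\mu}_n)\to\Phi(\bs{\mu})$ in $W^{1,2}_{loc}(\R^N\setminus S)$.
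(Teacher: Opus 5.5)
Your proposal is correct and follows the same overall plan as the paper: each assertion is read off from \eqref{contdy}, \eqref{ws2der}, \eqref{bderpE}, \eqref{majpF} and the decay estimate, with De Giorgi--Nash giving the H\"older continuity. Two steps are argued differently.

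\textbf{The very weak solution step.} The paper obtains \eqref{derpsi23} from the Fubini computation \eqref{dd1}--\eqref{dd2}. That computation moves the derivative onto $E(\cdot,y)\in W^{1,p}_{loc}(\R^N)$ (Theorem~\ref{kerell}(iv)) and then invokes the symmetry of $E$. The result is an identity between distributions, so it only holds Lebesgue-a.e. Yet it is then integrated against $d\bs{\mu}$, and $\bs{\mu}$ is typically carried by a Lebesgue-null set. This tacitly needs both sides to be continuous on $V$. The left side is, since $G(\psi)\in C^1(V)$ by \cite{YLi}. For the right side, continuity is exactly your argument for $F_j$.

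Your difference-quotient argument along segments gives $\partial_jG(\psi)(y)=F_j(y)$ at every $y\in V$ directly, and shows in passing that $G(\psi)\in C^1(V)$. It also works for $\psi\in C_c(\R^N)$, as \eqref{vwes13} requires, whereas the paper takes $\psi\in C^\infty_c$. The paper's route is shorter but leaves this pointwise upgrade to the reader.

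You should record one detail: your difference quotients use point values, so you need $G(\psi)(y)=\int E(x,y)\psi(x)\,dm(x)$ at every $y$, not just a.e. This holds because both sides are continuous, by \eqref{compNew} and Theorem~\ref{kerell}(iii).

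\textbf{The weak-star continuity step.} The paper derives locally uniform convergence of $\Phi(\bs{\mu}_n)$ on $\R^N\setminus S$ from \eqref{contdy}, then upgrades it to $W^{1,2}_{loc}$ convergence with Caccioppoli's inequality. You instead approximate the $W^{1,2}$-valued kernel from \eqref{ws2der} by finite-rank kernels. Both rest on the same fact: bounded weak-star convergent sequences converge uniformly on norm-compact subsets of the predual. Your version avoids Caccioppoli at this step, at the cost of working with vector-valued approximations.
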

      \begin{proof}
        By the discussion afer Equation \eqref{contdy}, $\Phi$ is well-defined both pointwise a.e. and as a Bochner integral in  $L^p_{loc}(\R^N)$ for $1\leq p<\frac{N}{N-1}$. Moreover
 it is a weak solution to $\mathcal{L}u=0$ in $W^{1,2}_{loc}(\R^N\setminus\supp\bs{\mu})$. In particular it is locally H\"older-continuous on $\R^N\setminus\supp\bs{\mu}$ by the De Giorgi-Nash  estimates, and if $\bs{\mu}_n$
        converges weak-star to $\bs{\mu}$ in  $\mathcal{M}(S)^N$, it follows from \eqref{vwsRv1p} and \eqref{contdy} that $\Phi(\bs{\mu}_n)$ converges locally uniformly to $\Phi(\bs{\mu})$ on $\R^N\setminus S$, hence also in  $W^{1,2}_{loc}(\R^N\setminus S)$ by Cacciopoli's inequality.
        We showed as well that
        $\lim_{|x|\to\infty}|\Phi(x)|=0$. Finally, to see that $\Phi$ is a very weak solution of \eqref{eqelf},
    fix $j\in\{1,\cdots,N\}$ and pick $\psi,\varphi\in C^\infty_c(\R^N)$ to write
    \begin{align}
      \label{dd1}
      \int \partial_j\varphi(x)\left(\int E(x,y)\psi(y) dm(y)\right)dx= \int \psi(y)\left(\int \partial_j\varphi(x)E(x,y)dx\right)dy\\
      \label{dd2}
      =-\int \!\!\psi(y)\left(\int\varphi(x) \partial_{x_j}
      E(x,y)dx\right)dy
       =-\int \varphi(x)\left(\int \partial_{x_j}E(x,y)\psi(y)\,dy\right)dx,
      \end{align}
      where the repeated use of Fubini's theorem 
      is justified by
      Theorem \ref{kerell} points (iv) and (i),
      while the second equality
      follows by definition of distributional derivatives.
            The chain of equations \eqref{dd1}-\eqref{dd2} means that the distributional derivatives of $G(\psi)$ can be computed by differentiating 
      under the integral sign:
\begin{equation}
  \label{derpsi23}
\partial_jG(\psi)(x)=\int \partial_{x_j}E(x,y)
  \psi(y)\,dy.
\end{equation}
Subsequently we can write
        \begin{align}
          \nonumber
          \int  \!\!\phi\psi\,dm&=\!\!\int\!\!\left(\int \!\!\nabla_y E(x,y)\!\cdot \!d\bs{\mu}(y)\!\!\right)\!\psi(x)dm(x)\!=\!\Sigma_{j=1}^N\!\int\!\!\left(\int \!\!\partial_{y_j} E(x,y)\psi(x)dm(x)\!
          \!\right)d\mu_j(y)\\
                                \nonumber& =      \Sigma_{j=1}^N\int \partial_{y_j}G(\psi)(y)d\mu_j(y)
          =\int \nabla G(\psi)(y)\cdot d\bs{\mu}(y),          
        \end{align}
        where the  appeal to Fubini's theorem to get the second equality
        is warranted by \eqref{majpF}, and we used \eqref{derpsi23} together with the symmetry of $E$ to get the third one.
      \end{proof}
      We complement Theorem \ref{soldim1} by
      strengthening  the nature of
      solutions when  $\bs{\Sigma}_{|V}$ belongs to $(W^{1,l}(V))^{N\times N}$
      for some $l>N$ on an open  neighborhood $V$ of $\supp\bs{\mu}$.
      In this case $\bs{\Sigma} \nabla \phi$ defines a distribution there,
      acting on
              $\varphi\in C_c^\infty(V)$ by  $\langle \bs{\Sigma} \nabla \phi,\varphi\rangle=\langle \phi,\div(\bs{\Sigma}\nabla\varphi)\rangle$, which is well-defined
              since $\bs{\Sigma}\nabla\varphi$ lies in
      $W^{1,l}(V,\R^N)$ and is compactly supported while $\phi$ lies
      in $L^p_{loc}(\R^N)$ for $1\leq p<\frac{N}{N-1}$. Moreover, $\bs{\Sigma}_{|V}$ is then locally Hölder-continuous by the Sobolev embedding theorem and hence, shrinking $V$ if necessary, we may as well assume that
      it is Dini-continuous on $V$ in which case
      the hypotheses of Theorem \ref{soldim1} are satisfied.
\begin{corollary}
                \label{corDinib}
                Assume in Theorem \ref{soldim1} that
                $\bs{\Sigma}_{|V}$ is not just Dini-continuous 
                but lies in $(W^{1,l}(V))^{N\times N}$
                for some $l>N$.  Then,
                 $\mathcal{L}\phi=\mathrm{div}\bs{\mu}$ in the distributional sense on $V$:
      \begin{equation}
        \label{distdive}
        \mathrm{div}\left(\bs{\Sigma} \nabla \phi_{|V}\right)=\mathrm{div}\bs{\mu}
          \qquad\mathrm{as\ distributions\ on}\ V.
        \end{equation}
      \end{corollary}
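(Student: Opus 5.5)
We want to show that for every $\varphi\in C_c^\infty(V)$,
\[
\langle \phi,\div(\bs{\Sigma}\nabla\varphi)\rangle=\int\nabla\varphi\cdot d\bs{\mu},
\]
so that the distribution $\div(\bs{\Sigma}\nabla\phi_{|V})$, defined as in the paragraph preceding the statement, equals $\div\bs{\mu}$. Here $\phi=\Phi(\bs{\mu})$ denotes the very weak solution from Theorem \ref{soldim1}. The plan is to start from the very weak formulation \eqref{vwes13} and choose $\psi$ so that $G(\psi)=\varphi$, at least near $\supp\bs{\mu}$.

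\textbf{Step 1 (choice of test function).} Fix $\varphi\in C_c^\infty(V)$ and set $\psi:=-\div(\bs{\Sigma}\nabla\varphi)=\mathcal{L}\varphi$. Since $\bs{\Sigma}\in W^{1,l}(V)$, the function $\psi$ lies in $L^l(\R^N)$ with compact support in $V$, but it is not continuous. By definition of weak solutions, $\varphi$ itself solves $\mathcal{L}u=\psi$ and belongs to $Y^{1,2}(\R^N)$. Uniqueness in Lax--Milgram then gives $G(\psi)=\varphi$, provided $\psi\in(Y^{1,2})^*$. This holds because $\psi\in L^{2N/(N+2)}$, being compactly supported and in $L^l$ with $l>N\ge 2N/(N+2)$.

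\textbf{Step 2 (extending \eqref{vwes13} from $C_c$ to such $\psi$).} This is the main obstacle, since \eqref{vwes13} is only stated for $\psi\in C_c(\R^N)$. Take mollifications $\psi_n\in C_c^\infty$ with supports in a fixed compact $K\subset V$ and $\psi_n\to\psi$ in $L^l$. Two limits must be controlled.
\begin{itemize}
\item[(a)] On the left-hand side, $\int\phi\psi_n\,dm\to\int\phi\psi\,dm$. This needs $\phi\in L^{l'}_{loc}$, and $l'<N/(N-1)$ holds because $l>N$. So the convergence follows from Theorem \ref{soldim1}.
\item[(b)] On the right-hand side, $\nabla G(\psi_n)\to\nabla G(\psi)=\nabla\varphi$ uniformly on $\supp\bs{\mu}$. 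I would obtain this from the representation \eqref{derpsi23}, $\partial_jG(\psi)(x)=\int\partial_{x_j}E(x,y)\psi(y)\,dy$, combined with the bound \eqref{bderpE} transferred to the $x$-gradient via symmetry (i) of $E$. This gives $|\nabla_xE(x,y)|\le C'|x-y|^{1-N}$ locally uniformly on compacts of $V$, while $\nabla_xE(x,\cdot)$ is bounded away from a neighborhood of $x$ by \eqref{contdy}. Since $|x-\cdot|^{1-N}\in L^{l'}_{loc}$ uniformly in $x$ (again because $l'<N/(N-1)$), Hölder's inequality yields
\[
\sup_{x\in\supp\bs{\mu}}|\nabla G(\psi_n-\psi)(x)|\le C\|\psi_n-\psi\|_{L^l}\to0.
\]
\end{itemize}
One must also check that \eqref{derpsi23} holds for $\psi\in L^l$, not only for smooth $\psi$. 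The Fubini argument in \eqref{dd1}--\eqref{dd2} goes through verbatim, and continuity of the resulting integral on $V$ follows from the same uniform integrability estimate. This identifies the continuous representative of $\nabla G(\psi)$ with $\nabla\varphi$.

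\textbf{Step 3 (conclusion).} Passing to the limit in \eqref{vwes13} gives
\[
\int\phi\,\mathcal{L}\varphi\,dm=\int\nabla\varphi\cdot d\bs{\mu},
\]
that is, $-\langle\phi,\div(\bs{\Sigma}\nabla\varphi)\rangle=\int\nabla\varphi\cdot d\bs{\mu}=-\langle\div\bs{\mu},\varphi\rangle$. Since $\langle \div(\bs{\Sigma}\nabla\phi),\varphi\rangle=-\langle\phi,\div(\bs{\Sigma}\nabla\varphi)\rangle$ with the sign conventions of the definition preceding the statement, this yields \eqref{distdive}. The only delicate points are the sign bookkeeping between $\mathcal{L}$ and $\div(\bs{\Sigma}\nabla\cdot)$, and the locally uniform kernel bound used in Step 2(b).
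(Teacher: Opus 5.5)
Your argument is correct and rests on the same two ingredients as the paper's proof. The first is the identification $G(\mathcal{L}\varphi)=\varphi$ for $\varphi\in C_c^\infty(V)$. The second is the validity of \eqref{derpsi23} for compactly supported $L^l$ data, which follows from the locally uniform $L^{l'}$-integrability of $\nabla_xE(x,\cdot)$ given by \eqref{bderpE} together with $l'<N/(N-1)$.

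The difference is in the packaging. The paper never returns to \eqref{vwes13}. It inserts $\mathcal{L}\varphi$ directly into \eqref{derpsi23}, integrates the resulting formula for $\partial_j\varphi$ against $d\mu_j$, swaps the $dy$ and $d\mu_j$ integrations by Fubini, and recognizes $\phi_j$ through the symmetry of $E$. The H\"older bound in your Step 2(b) is exactly what justifies that swap. So once \eqref{derpsi23} is available for $L^l$ data, your mollification step is dispensable.

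What your route buys is that Theorem \ref{soldim1} is used as a black box. You also make explicit a point the paper passes over: \eqref{derpsi23} is needed at every point of $\supp\bs{\mu}$, not just almost everywhere. This matters because $\supp\bs{\mu}$ may be Lebesgue-null (e.g.\ for slender $S$), and, as you note, it requires the continuity on $V$ of both sides.

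One correction in Step 3. Integrating by parts twice gives $\langle\mathrm{div}(\bs{\Sigma}\nabla\phi),\varphi\rangle=+\langle\phi,\mathrm{div}(\bs{\Sigma}\nabla\varphi)\rangle$, which is also how this distribution is defined just before the statement. Combined with your relation $-\langle\phi,\mathrm{div}(\bs{\Sigma}\nabla\varphi)\rangle=-\langle\mathrm{div}\bs{\mu},\varphi\rangle$, the plus sign yields \eqref{distdive}. The minus sign you wrote would instead give $\mathrm{div}(\bs{\Sigma}\nabla\phi)=-\mathrm{div}\bs{\mu}$.
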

     
\begin{proof}
  Under the foregoing assumptions,
        if $\varphi\in C_c^\infty(V)$ then the distribution $\mathcal{L}\varphi$ identifies with the   compactly supported function  $g:=\mathrm{div}\,(\bs{\Sigma}\nabla \varphi)
        \in L^l(V)$.        
        In particular $g$
        lies in $(Y^{1,2}(\R^n))^*$ so that $\varphi=G(g)$; here, $g$, $\varphi$ are prolongated by $0$ off $V$. One can check that
        \eqref{derpsi23} remains valid when $\psi$ is compacly supported in $L^l(\R^N)$, for the use of Fubini's theorem in \eqref{dd1} and \eqref{dd2} is still justified since  $\partial_{x_j}E(x,\cdot)\in L^p_{loc}(\R^N)$ for
        $1\leq p<\frac{N}{N-1}$, locally uniformly with respect to $x$
        (see \eqref{bderpE}).
        So, letting $\psi=g$ in  \eqref{derpsi23}, we can write
\begin{equation}
  \label{derpsi2}
\partial_j\varphi(x)=\int \partial_{x_j}E(x,y)
  \mathrm{div}\bigl(\bs{\Sigma}\nabla\varphi)(y)\,dy.
\end{equation}
Now, let $\phi_j:=\int \partial_{y_j}E(x,y)d\mu_j(y)$.
For $\varphi\in C^\infty_c(V)$, we get from  \eqref{derpsi2} that
\begin{align}
  \label{distsoldivm}
\int \partial_j\varphi(x) d\mu_j(x)=\int\left(\int \partial_{x_j}E(x,y)
  \mathrm{div}\bigl(\bs{\Sigma}\nabla\varphi)(y)\,dy \right)d\mu_j(x)\\
  \nonumber
  =\int\!\!\left(\int \partial_{x_j}E(x,y) d\mu_j(x)\right)\mathrm{div}\bigl(\bs{\Sigma}\nabla\varphi)(y)dy=\int\!\! \phi_j(y)
  \mathrm{div}\bigl(\bs{\Sigma}\nabla\varphi)(y)\,dy=\langle \mathrm{div}\bigl(\bs{\Sigma}\nabla \phi_j),\varphi\rangle,
  \end{align}
  where the symmetry of $E$ was used to substitute $\phi_j$ in the next-to-last equality and we appealed to
  the very definition of the distribution
  $\mathrm{div}\bigl(\bs{\Sigma}\nabla \phi)$
  to get the last one; here, the use of Fubini's theorem is justified as
  before by the fact that $\partial_{x_j}E(x,\cdot)\in L^p_{loc}(\R^N)$ for
  $1\leq p<\frac{N}{N-1}$, locally uniformly with respect to $x$,
  and Hölder's inequality,
  since $\varphi$ has compact suppport in $V$ and $\mathrm{div}\bigl(\bs{\Sigma}\nabla\varphi)$ lies in $L^l(V)$ with $l>N$. Summing \eqref{distsoldivm} over $j$
  yields \eqref{distdive}.
  \end{proof}

     \begin{corollary}
                \label{corLipab}
                 Assume in Theorem \ref{soldim1} that the restriction
                 $\bs{\Sigma}_{|V}$ lies in $C^1(V,\R^{N\times N})$  and has
                 Dini continuous derivative there.
                Then, for any bounded open set $O\subset\R^N$ satisfying
                $\overline{O}\cap S=\emptyset$, there is a bounded open set
                $U$  with $\overline{U}\cap \overline{O}=\emptyset$ and  $S\subset U\subset\overline{U}\subset V$ for which
                the maps $y\mapsto E(\cdot,y)$ and  $y\mapsto \nabla_y E(\cdot,y)$ are Lipschitz continuous
                from $U$ into $W^{1,2}(O,\R^N)$.
                              \end{corollary}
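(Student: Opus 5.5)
We first choose $U$. Since $\overline{O}$ is compact and disjoint from $S$ while $S\subset V$, we take $U$ to be a small open neighborhood of $S$ (say the $\delta$-neighborhood, $\delta$ small) with $\overline{U}\subset V$ and $\mathrm{dist}(\overline{U},\overline{O})\geq 3\delta>0$. We also fix an open $W$ with $\overline{U}\subset W$, $\overline{W}\subset V$ and $\mathrm{dist}(\overline{W},\overline{O})\geq\delta$, together with a bounded open $O'\supset\overline{O}$ with $\overline{O'}\cap\overline{W}=\emptyset$.

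The strategy is to exploit the symmetry $E(x,y)=E(y,x)$ of Theorem \ref{kerell}(i). For fixed $x\in O'$, the function $y\mapsto E(x,y)$ is a weak solution of $\mathcal{L}u=0$ in $V\setminus\{x\}\supset W$, and $\bs{\Sigma}$ is $C^1$ with Dini-continuous derivatives there. The key regularity input is a $C^{2}$-type interior estimate for such coefficients (the Dini-smooth analogue of Schauder theory, e.g. \cite{YLi} applied to the differentiated equation, or the corresponding result for divergence-form operators with Dini-smooth coefficients). It yields $E(x,\cdot)\in C^{2}(U)$ with
\[
\|D^2_yE(x,\cdot)\|_{L^\infty(U)}\leq K\,\|E(x,\cdot)\|_{L^\infty(W)},
\]
where $K$ depends only on the ellipticity constants, the moduli of continuity of $D\bs{\Sigma}_{|V}$, and $\mathrm{dist}(U,\partial W)$. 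By \eqref{compNew}, $\|E(x,\cdot)\|_{L^\infty(W)}\leq C_0\delta^{2-N}$ uniformly for $x\in O'$. The mean value theorem, applied on segments inside $U$ (we may take $U$ to be a finite union of balls and argue locally, or use quasi-convexity of a suitable neighborhood), then gives
\[
|\nabla_yE(x,y)-\nabla_yE(x,y')|+|E(x,y)-E(x,y')|\leq K'|y-y'|,\qquad x\in O',\ y,y'\in U.
\]
This is the main obstacle: it requires a second-derivative bound in $y$, uniform in $x$. If the direct $C^2$ estimate is not at hand, we would instead differentiate the equation. For fixed $x$ and each $j$, $v=\partial_{y_j}E(x,\cdot)$ satisfies $\mathrm{div}(\bs{\Sigma}\nabla v)=-\mathrm{div}((\partial_j\bs{\Sigma})\nabla_y E(x,\cdot))$, with right-hand side the divergence of a Dini-continuous field, whose $C^1$ norm is controlled by \eqref{estfreeze}. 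The $C^{1,\mathrm{Dini}}$ estimates for divergence-form equations with Dini data then bound $\nabla v$.

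Finally, we convert the pointwise bound in $x$ into the $W^{1,2}(O)$ bound. For $y,y'\in U$, the function $h:=\partial_{y_j}E(\cdot,y)-\partial_{y_j}E(\cdot,y')$ (respectively $E(\cdot,y)-E(\cdot,y')$) is a weak solution of $\mathcal{L}h=0$ in $O'$, by \eqref{ws2der} and Theorem \ref{kerell}(v). The previous step gives $\|h\|_{L^\infty(O')}\leq K'|y-y'|$. The Caccioppoli inequality on $O\Subset O'$, applied through a finite cover of the compact set $\overline{O}$ by balls, gives $\|\nabla h\|_{L^2(O)}\leq C\|h\|_{L^2(O')}\leq C''|y-y'|$. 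Together with the $L^2(O)$ bound $m(O)^{1/2}K'|y-y'|$, this yields the asserted Lipschitz continuity of $y\mapsto E(\cdot,y)$ and $y\mapsto\nabla_yE(\cdot,y)$ from $U$ into $W^{1,2}(O)$ and $W^{1,2}(O,\R^N)$, respectively.
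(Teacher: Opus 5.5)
Your proof is correct, and the second half takes a different route from the paper's.

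\textbf{Common core.} The $y$-regularity step is the same in both. The paper also obtains a Lipschitz bound in $y$ for $\nabla_yE(x,\cdot)$, uniform in $x$. It does so by viewing $\nabla_yE(x,\cdot)$ as a $W^{1,2}$ solution (via \cite[Theorem 8.8]{GiTru}) of the $y$-differentiated equation, a system with Dini-continuous coefficients, and then applying \cite{DongXu}. Your use of \cite{YLi} on $\mathrm{div}(\bs{\Sigma}\nabla v)=-\mathrm{div}((\partial_j\bs{\Sigma})\nabla_yE(x,\cdot))$ is the decoupled scalar version of this.

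\textbf{Where you differ.} The difference is how the $x$-derivatives in the $W^{1,2}(O)$ norm are controlled.
\begin{itemize}
\item The paper forms difference quotients $v_{n,j}(x,\cdot)$ in $x$ and shows they satisfy the same differentiated system. It applies \cite{DongXu} again with constants uniform in $x$ and $n$, integrates over $x\in O$, and passes to a weak limit.
\item You observe that $x\mapsto\partial_{y_j}E(x,y)-\partial_{y_j}E(x,y')$ and $x\mapsto E(x,y)-E(x,y')$ are weak solutions of $\mathcal{L}h=0$ on an enlarged set $O'\Supset O$, by \eqref{ws2der} and Theorem \ref{kerell}. Caccioppoli's inequality then turns the pointwise bound on $O'$ into the gradient bound on $O$.
\end{itemize}

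\textbf{What each buys.} Your route is shorter. It avoids the uniform $W^{1,2}(U)$ bounds on the $x$-difference quotients and the weak-limit step. It also shows that Lipschitz continuity of $y\mapsto E(\cdot,y)$ needs only a uniform $C^1$ bound in $y$; second-order control is needed only for $y\mapsto\nabla_yE(\cdot,y)$. The cost is that your pointwise estimates must hold for $x$ in the slightly larger set $O'$, which your choice of $\delta$ provides.

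\textbf{Two points to tighten.}
\begin{itemize}
\item The Dini modulus of the field $(\partial_j\bs{\Sigma})\nabla_yE(x,\cdot)$ needs a modulus of continuity of $\nabla_yE(x,\cdot)$ that is uniform in $x$. The sup bound from \eqref{estfreeze} alone is not enough; the modulus can come from \cite{YLi}, or from $C^{1,\alpha}$ estimates since $\bs{\Sigma}$ is Lipschitz near $S$.
\item Take the second-derivative bound on an intermediate open set containing $\overline U$. Then the mean value theorem for small $|y-y'|$ and boundedness for large $|y-y'|$ give a single Lipschitz constant on all of $U$.
\end{itemize}
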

              \begin{proof}
                Pick any  $U$ as in the statement. We know
                from Theorem \ref{soldim1} that $E(x,\cdot)$ is a solution to $\mathcal{L}u=0$ in $W^{1,2}(U)$ whose norm in the latter space is bounded independently of $x\in O$. Because $\bs{\Sigma}$ is {\it a fortiori}
                Dini continuous on $V$, it follows from  \cite[Theorem 1.1]{YLi}
                that $y\mapsto \nabla_y E(x,y)$  belongs to $C(U,\R^N)$,  with
                modulus of continuity controlled by $\omega_{\bs{\Sigma}_{|V}}$,
                $\mathrm{d}(\partial U,\partial V)$, 
                the ellipticity constants in \eqref{sell} and the dimension;
                in particular it is majorized independently of $x\in O$ by a positive continuous function vanishing at $0$, and since
                $\|\nabla E(x,\cdot)\|_{L^2(U)}$ is uniformly bounded with respect to $x\in O$ we deduce that  $\|\nabla E(x,\cdot)\|_{L^\infty(U)}$ is also
                bounded uniformly in $x\in O$. In particular the map
                $y\mapsto E(\cdot,y)$ is Lipschitz from $U$ to $L^2(O)$.
                Besides,
                shrinking $U$ slightly if necessary,
                \cite[Theorem 8.8]{GiTru} entails since  $\bs{\Sigma}_{|U}$ is {\it a fortiori} Lipschitz that $E(x,\cdot)_{|U}\in W^{2,2}(U)$
                with norm controlled by $\|\nabla E(x,\cdot)\|_{W^{1,2}(V)}$, the ellipticity constants and $\mathrm{d}(\partial U,\partial V)$.
                Thus,
                for $i\in\{1,\cdot,N\}$, the function $w_{x.i}:=\partial_{y_i}E(x,\cdot)_{|U}$ is a weak solution in $W^{1,2}(U)$ of 
\begin{equation}
  \label{regLGTou}
\mathrm{div}\,\left(\bs{\Sigma}\nabla w_{x,i}\right)=-\mathrm{div}\left(\left(\partial_i\bs{\Sigma}\right)
\nabla E(x,\cdot)\right),\qquad x\in O.
\end{equation}
Let $\{h_n\}\subset\R$
be  a sequence tending to $0$ with $0<h_n<\mathrm{dist}(\overline{O},\overline{U})$. For $(x,y)\in O\times U$
                and $j\in\{1,\cdots,N\}$,  let us form the sequence 
                $v_{n,j}(x,y):=\frac{\nabla_yE(x+h_n\bb{e}_j,y)-\nabla_yE(x,y)}{h_n}$. 
                From  the sequential continuity assertion in Theorem
                \ref{soldim1} (applied with  $S=\overline{U}$) we deduce that $\|\nabla_y E(\cdot,y)\|_{W^{1,2}(O,\R^N)}$ is bounded independently of $y\in U$
                and therefore, by classical properties of difference quotients \cite[Lemma 7.23]{GiTru}, we get that $\|v_{n,j}(\cdot,y)\|_{L^2(O,\R^N)}$ is bounded independently of $y\in U$ and $n\in\mathbb{N}$. If we fix $y\in U$ some subsequence $v_{m_n,j}(\cdot,y)$ converges weakly to some $f_{y,j}\in L^2(U,\R^N)$, and
              of necessity $f_{y,j}(x)=\partial_{x_j}\nabla_yE(x,y)$ for if $\varphi\in C_c^\infty(O)$ then
              \begin{align}\nonumber
                \lim_n\int_Ov_{m_n,j}(x,y)\varphi(x)dm(x)=\lim_n\int_O\nabla_yE(x,y)\frac{\varphi(x-h_{m_n}\bb{e}_j)-\varphi(x)}{h_{m_n}}dm(x)\\
                \label{derquotf}
                =-\int_O\nabla_yE(x,y)\partial_{x_j}\varphi(x)dm(x),
\end{align}
where we noticed that $\frac{\varphi(x-h_{m_n})-\varphi(x)}{h_{m_n}}$ converges uniformly to
$-\partial_{x_j}\varphi(x)$ on $O$. Consequently, $v_{n,j}(\cdot,y)$ converges weakly to $\partial_{x_j}\nabla_yE(x,y)$ in $L^2(U)$.

In another connection, letting $[v_{n,j}]_i(x,\cdot):=\frac{w_{x+h_n\bb{e}_j,i}-w_{x,i}}{h_n}$ denote the $i$-th component of the vector-valued  function $v_{n,j}(x,\cdot)$, 
  we deduce by linearity from \eqref{regLGTou} that
\begin{equation}
  \label{regLGToub}
\mathrm{div}\,\left(\bs{\Sigma}\nabla [v_{n,j}]_i\right)=-\mathrm{div}\left(\left(\partial_j\bs{\Sigma}\right)
v_{n,j}(x,\cdot)\right).
\end{equation}
Consequently, $v_{n,j}(x,\cdot)$ is a weak solution in $W^{1,2}(U)$  of the $N\times N$ \emph{system} of equations:
\begin{equation}
  \label{syseq}
\sum_{\alpha=1}^N\partial_\alpha\left(\sum_{\beta=1}^N\mathcal{A}^{\alpha,\beta}\partial_\beta  v_{n,j}(x,\cdot) +B^\alpha v_{n,j}(x,\cdot)\right)=0
\end{equation}
where $\mathcal{A}^{\alpha,\beta}:=\mathrm{diag}\{\bs{\Sigma}_{\alpha,\beta}\}$ and
$B^\alpha :=\partial_j[\bs{\Sigma}]_\alpha$ with $[\bs{\Sigma}]_\alpha$ to mean the $\alpha$-th row of $\bs{\Sigma}$.
This system is strictly elliptic, by \eqref{sell}, and it has Dini-continuous
coefficients by assumption. Hence, it is a special case of \cite[Theorem 1.1]{DongXu} (the case of a single domain) that $v_{n,j}(x,\cdot)$ is
Lipschitz continuous on every $C^{1,\alpha}$-regular open set $U^\prime$ with $S\subset\overline{U}^\prime\subset U$, where $\alpha$ is arbitrary in $(0,1)$.
Moreover, in view of \cite[Remark 1.4]{DongXu},
the Lipschitz constant can be made uniform with respect to $x\in O$,
because so does the $W^{1,2}(U)$-norm of $v_{n,j}(x,\cdot)$ since  it is controlled in terms of  $\|v_{n,j}(x,\cdot)\|_{W^{2,2}(U)}$ by  properties of difference quotients,
and the latter norm can be majorized uniformly with respect to $x$ as pointed out earlier in the proof.  Integrating over $x$, we get since $\overline{O}$ is compact that $\|v_{n,j}(\cdot,y)-v_{n,j}(\cdot,y^\prime)\|_{L^2(O,\R^N)}\leq K|y-y^\prime|$ for
$y, y^\prime\in U^\prime$ with some constant $K$,  and passing to the weak limit
(which cannot increase the norm) we get that
$\|\partial_j\nabla_yE(\cdot,y)-\partial_j\nabla_yE(\cdot,y^\prime)\|_{L^2(O,\R^N)}\leq K|y-y^\prime|$. As $j$ was arbitrary in $\{1,\cdots,N\}$ a similar inequality holds for $\|\nabla\nabla_yE(\cdot,y)-\nabla\nabla_yE(\cdot,y^\prime)\|_{L^2(O,\R^{N\times N})}$, and since we pointed out earlier in the proof that the map
 $y\mapsto E(\cdot,y)$ is Lipschitz from $U$ to $L^2(O)$
we deduce it is Lipschitz from $U^\prime$ to $W^{1,2}(O)$.
Moreover, observing from \eqref{regLGTou} that,  for fixed $x\in O$, the
function $w_x(y):=\nabla_yE(x,y)$ lies in $W^{1,2}(U^\prime,\R^N)$ and
satisfies the same system of equations \eqref{syseq} as $v_{n,j}$, 
we get from  \cite[Theorem 1.1]{DongXu} again that $w_x$ is Lipschitz on any
$C^{1,\alpha}$-regular  open set $U^{\prime\prime}\subset U^\prime$ containing $S$, and likewise the Lipschitz constant can be made independent of $x\in O$. 
Integrating over $x$ as before, we find that $\|\nabla_yE(\cdot,y)-\nabla_yE(\cdot,y^\prime)\|_{L^2(O,\R^N)}\leq K^\prime|y-y^\prime|$ for
$y, y^\prime\in U^{\prime\prime}$. Renaming $U^{\prime\prime}$ as $U$, we find
altogether that $y\mapsto \nabla_y E(\cdot,y))$ is Lipschitz continuous
                from $U$ into $W^{1,2}(O,\R^N)$ as announced.
              \end{proof}

                \section{Some unique continuation results}
                \label{UC}
                 Let  $\bs{\Sigma}:\R^N\to\R^{N\times N}$ satisfy \eqref{sell}
                and be Dini-continuous in a neighborhood of a 
                compact set $S\subset\R^N$. Recall from \eqref{genEllop}
                the differential operator  $\mathcal{L}$  
                and consider Equation \eqref{eqelf}
                where $\supp\bs{\mu}\subset S$, whose (very weak) solution $\Phi$ uniquely exists
                by Theorem  \ref{soldim1}. Below, we pick a closed set
                $Q\subset\R^N\setminus S$ and check that Scenarios (S.1) and (S.2), set out in Section \ref{CondEUsec},
                cause $\Phi$ to vanish on $\R^N\setminus S$ if the restriction $\Phi_{|Q}$ does;  and  similarly  for $\nabla\Phi_{|Q}$.

Keeping in mind the definitions made after conditions \ref{conE}
in Section \ref{Sect2.1}, let us first consider
Scenario (S.1) assuming that $\bs{\Sigma}$ is piecewise Lipschitz on
$\R^N\setminus S$, say  with pieces $\{\mathcal{O}_j\}_{1\leq j\leq\ell}$.
By Theorem \ref{soldim1} the function $\Phi$  belongs to $C^{0,\alpha}_{loc}(\R^N\setminus S)$, and reference
   \cite[Theorem 1.1]{LiVog} implies that  $\nabla\Phi$ extends
   Hölder-continuously to
   $\partial\mathcal{O}_j\setminus \mathcal{V}(S)$,  $j\in\{1,\cdots,\ell\}$, for  any 
   neighborhood $\mathcal{V}(S)$ of $S$. Hence, the
   tangential gradient $\nabla_T\Phi$ is continuous on $\partial\mathcal{O}_{j}\setminus S$, and the normal derivative
   $\partial\Phi/\partial n^\pm$ is continuous from each side although a jump may occur across $\partial\mathcal{O}_j$. Moreover, since
   $\Phi$ is a weak solution of $\mathcal{L}\Phi=0$ in $W^{1,2}_{loc}(\R^N\setminus S)$, it follows from the Green formula that
   \begin{equation}
     \label{transcon}
     \bs{\Sigma}^+(x)\left(\frac{\partial\Phi}{\partial n^+}(x)\right )n^+(x)=\bs{\Sigma}^-(x)\left(\frac{\partial\Phi}{\partial n^-}(x)\right)n^-(x),\qquad x\in  \partial\mathcal{O}_j\setminus S,
     \end{equation}
   where $\bs{\Sigma}^\pm$ denote the limiting values of $\bs{\Sigma}$ from each side of  $\partial\mathcal{O}_j\setminus S$. So,
   $\partial\Phi/\partial n^+(x)=0$ if and only if    $\partial\Phi/\partial n^-(x)=0$, and it makes sense to say that $\nabla\Phi(x)$ vanishes
   when  $x\in\R^N\setminus S$.
   

\begin{lemma}
  \label{S1}
  Let  $\bs{\Sigma}:\R^N\to\R^{N\times N}$ satisfy \eqref{sell} and be  Dini-continuous in a neighborhood of a slender compact set $S$, as well as  piecewise Lipschitz on 
  $\R^N\setminus S$.
   Let further  $\Phi$ 
  be the very weak solution to $\mathcal{L}u=\mathrm{div}\,\bs{\mu}$ on $\R^N$
   given by  Theorem \ref{soldim1}, with
   $\bs{\mu}\in \mathcal{M}(S)^N$.
   If  $Q$ is a compact set  with $Q\cap S=\emptyset$ that contains an interior point or a nonempty $C^{1,\alpha}$-smooth
   hypersurface, then 
   $\Phi_{|Q}\equiv0$ or   $\nabla\Phi_{|Q}\equiv0$ implies
   $\Phi_{|\R^N\setminus S}\equiv0$.
    \end{lemma}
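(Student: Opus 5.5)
The plan is to propagate the vanishing of $\Phi$ from $Q$ to all of $\R^N\setminus S$ by unique continuation for $\mathcal{L}u=0$. Three facts are used: $\Phi$ is a weak solution of $\mathcal{L}\Phi=0$ in $W^{1,2}_{loc}(\R^N\setminus S)$ (Theorem \ref{soldim1}); $\R^N\setminus S$ is connected because $S$ is compact and slender; and $\Phi\to0$ at infinity.

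\emph{Reduction to $\nabla\Phi=0$ on an open set.} If $Q$ has an interior point, pick a ball $B\subset Q$. If $\Phi_{|Q}\equiv0$ then $\nabla\Phi=0$ on $B$. If $\nabla\Phi_{|Q}\equiv0$ then $\Phi$ is constant on $B$.

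If instead $Q$ contains a piece $\Gamma$ of a $C^{1,\alpha}$ hypersurface, work near a point of $\Gamma$.
\begin{itemize}
\item If $\Gamma$ is interior to a Lipschitz piece $\mathcal{O}_j$, take Cauchy data there. In the case $\nabla\Phi_{|\Gamma}\equiv0$, $\Phi$ is constant $=c$ on $\Gamma$ with zero normal derivative. Then $\Phi-c$ solves $\mathcal{L}u=0$ with zero Cauchy data on $\Gamma$.
\item Extend $\Phi-c$ by zero across $\Gamma$ on one side. The vanishing conormal derivative makes the extension a $W^{1,2}$ weak solution in a small ball, and it vanishes on an open half-ball.
\item In the case $\Phi_{|\Gamma}\equiv0$ only, I would read the hypothesis as including the normal derivative, as in Scenario (S.1)(ii), and proceed the same way.
\item If $\Gamma$ meets an interface $\partial\mathcal{O}_i$, use the transmission condition \eqref{transcon}: both one-sided normal derivatives vanish simultaneously, so the same reflection works from either side.
\end{itemize}
In every case we obtain an open set $W\subset\R^N\setminus S$ on which $\Phi$ equals a constant $c$.

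\emph{Propagation.} Apply weak unique continuation to $\Phi-c$, which solves $\mathcal{L}u=0$ and vanishes on $W$.
\begin{itemize}
\item Inside each Lipschitz piece, use the classical Garofalo--Lin / Aronszajn theorem for Lipschitz coefficients ($N\ge3$).
\item Across the $C^{1,\alpha}$ interfaces, use unique continuation for piecewise Lipschitz coefficients with $C^{1,\alpha}$ (or smoother) interfaces, as in the known results for transmission problems, citing the appropriate reference.
\item Chain these through the connected open set $\R^N\setminus S$. Since every point of $\R^N\setminus S$ lies in at most two closures of pieces, a connectedness argument shows that the set of points near which $\Phi\equiv c$ is open, closed and nonempty.
\end{itemize}
Hence $\Phi\equiv c$ on $\R^N\setminus S$. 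Because the unbounded component tends to zero at infinity, $c=0$.

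\emph{Main obstacle.} The delicate step is unique continuation across the discontinuity interfaces, together with the low regularity near $S$ (only Dini continuity there). I would avoid the latter entirely: all continuation takes place at positive distance from $S$, in $(\R^N\setminus S)_\varepsilon$, where the piecewise Lipschitz structure holds. Letting $\varepsilon\to0$ then preserves the connectivity of the exhaustion, since the $(\R^N\setminus S)_\varepsilon$ increase to the connected set $\R^N\setminus S$ and any two points lie in one connected component of some $(\R^N\setminus S)_\varepsilon$.
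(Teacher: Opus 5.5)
Your argument is correct, but it gets through the two key steps by different means than the paper.

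\textbf{How the paper argues.} The paper never extends by zero. In the interior-point case it applies the Aronszajn--Krzywicki--Szarski theorem \cite{AronKS} to get $\Phi\equiv c_0$ on the whole piece $\mathcal{O}_{j_0}$. To cross an interface $\mathcal{G}$, it uses the transmission condition to transfer $\nabla\Phi=0$ to the neighbouring side. It then embeds $\mathcal{G}$ in the boundary of a H\"older-smooth subdomain of $\mathcal{O}_{j_1}$, invokes the boundary unique continuation theorem of Adolfsson--Escauriaza \cite{AdolEsc}, and applies Aronszajn--Krzywicki--Szarski again. The pieces are chained one at a time through the connectedness of $\R^N\setminus S$. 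The hypersurface case is reduced to the interior-point case by the same boundary theorem.

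\textbf{How yours differs.} You replace the boundary theorem by zero-extension across $\Gamma$: Green's formula, with $\nabla\Phi$ continuous up to $\Gamma$ inside a Lipschitz piece, followed by interior weak unique continuation. You replace the piece-by-piece chaining by an open--closed argument on $\R^N\setminus S$, which is local and absorbs the paper's separate treatment of the case $\partial\mathcal{O}_{j_0}\subset S$. Your route is more self-contained, since it needs only interior unique continuation for Lipschitz coefficients, and vanishing Cauchy data on an open piece of $\Gamma$ is exactly what you have. The paper's route is shorter on the page but leans on a deeper boundary result than the situation requires.

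\textbf{The interface step.} Here you defer to an unnamed unique continuation result for transmission problems; you do not need one. Let $B$ be a small ball centred on an interface between $\mathcal{O}_j$ and $\mathcal{O}_k$, and suppose $\Phi-c$ vanishes on $B\cap\mathcal{O}_j$. Then $\nabla(\Phi-c)=0$ a.e.\ there, and the interface is Lebesgue-null. So $\Phi-c$ is also a weak solution in $B$ of $\mathrm{div}(\widetilde{\bs{\Sigma}}\nabla u)=0$, where $\widetilde{\bs{\Sigma}}$ is any uniformly elliptic Lipschitz extension of $\bs{\Sigma}_{|B\cap\mathcal{O}_k}$. One such extension is Kirszbraun's extension followed by metric projection onto the convex set $\{c\,\mathrm{I}_N\le M\le C\,\mathrm{I}_N\}$. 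Interior Lipschitz unique continuation then gives $\Phi\equiv c$ on $B$. The same coefficient swap is what justifies your reflection when $\Gamma$ lies on an interface.

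\textbf{Your reading of the hypothesis.} Reading $\Phi_{|Q}\equiv0$ on a hypersurface as including the normal derivative is necessary, not just convenient. Take $\bs{\Sigma}=\mathrm{I}_N$, $S=\{0\}$ and $\bs{\mu}=\bb{e}_1\delta_0$. Then $\Phi(x)$ is a nonzero multiple of $x_1|x|^{-N}$, which vanishes on $\{x_1=0\}\setminus\{0\}$ without vanishing identically. The paper's own proof likewise uses only $\nabla\Phi=0$ on the hypersurface, which is what Scenario (S.1)(ii) supplies.
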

    \begin{proof}
      Let $\mathcal{O}_1,\cdots ,\mathcal{O}_l$ be the pieces of the Lipschitz decomposition of $\bs{\Sigma}$ on  $\R^N\setminus S$, so that
      \begin{equation}\label{decsigmaLip}
        \R^N\setminus S=\left(\cup_j\overline{\mathcal{O}}_j\right)\setminus \partial\left(\R^N\setminus S\right)=\left(\cup_j\overline{\mathcal{O}}_j\right)\setminus S
      \end{equation}
      where the first equality holds by definition of piecwise Lipschitzianity
      and the second because $\partial\left(\R^N\setminus S\right)\subset S$
      but no element of $S$ can lie in the left hand side. Recall also from Theorem \ref{soldim1} that $\Phi$ is a weak solution to $\mathcal{L}\phi=0$ in
      $\R^N\setminus S$.

      If $Q$ contains an interior point it must contain a nonempty
      ball $B(x_0,r_0)\subset\mathcal{O}_{j_0}$ for some $j_0$. Thus, $\Phi$ is  constant on $B(x_0,r_0)$ and since $\bs{\Sigma}$ is locally Lipschitz
      in $\mathcal{O}_{j_0}$ by assumption, we conclude from \cite[Theorem 1]{AronKS} that $\Phi$ is constant throughout $\mathcal{O}_{j_0}$, say
      $\Phi_{|\mathcal{O}_{j_0}}\equiv c_0$ (obviously
      $c_0=0$ if we assume that $\Phi_{|Q}\equiv0$,
       but   we do not yet know that $c_0$ is zero 
       if we rather assume that $\nabla\Phi_{|Q}\equiv0$). Suppose first that $\partial\mathcal{O}_{j_0}\subset S$. Then we either have
       $\mathcal{O}_{j_0}=\R^N\setminus S$, whence
       $c_0=0$ since $\Phi$ vanishes at infinity (see Theorem \ref{soldim1})
       and there is nothing more to prove, or else $\R^N\setminus S$ 
       consists of  two disjoint nonempty open sets  namely $\mathcal{O}_{j_0}$ and $\R^N\setminus(\overline{\mathcal{O}}_{j_0}\cup S)$ ($=\R^N\setminus(\mathcal{O}_{j_0}\cup S))$, which contradicts the slenderness of $S$. Suppose now
       that $\partial\mathcal{O}_{j_0}\not\subset S$, so that
       we can find $y\in \partial\mathcal{O}_{j_0}$ with
       $B(y,\rho)\cap S=\emptyset$ for some $\rho>0$. From \eqref{decsigmaLip}
       we see that there is $\mathcal{O}_{j_1}$ with $(\partial\mathcal{O}_{j_0}\cap\partial\mathcal{O}_{j_1})\setminus S\neq\emptyset$ and thus, by definition of piecewise Lipschitzianity, $(\partial\mathcal{O}_{j_0}\cap\partial\mathcal{O}_{j_1})\setminus S$ must contain a connected manifold $\mathcal{G}$ which is
       isometric to the graph of a $C^{1,\alpha}$-function $\psi:B(0,r)\to\R^N$.
                Since the limit of $\nabla\Phi$ from $\mathcal{O}_{j_0}$ is zero on $\mathcal{G}$, so is the limit from $\mathcal{O}_{j_1}$; see the  discussion surrounding \eqref{transcon}. We can now construct a H\"older-smooth domain $\mathcal{O}\subset \mathcal{O}_{j_1}$ such that $\partial\mathcal{O}\supset \mathcal{G}$, and since H\"older-smooth domains  are  special cases
         of Dini domains as defined in \cite{AdolEsc}, we deduce from
         \cite[Theorem 0.5 \& Remark p. 959]{AdolEsc} that $\Phi$ is constant
         on $\mathcal{O}$, and of necessity the constant is $c_0$.
         Thus, as $\bs{\Sigma}$ is Lipschitz on $\mathcal{O}_{j_1}$, it is 
         identically equal to $c_0$ there, by \cite[Theorem 1]{AronKS} again.
         So, $\Phi\equiv c_0$ on $\mathcal{O}_{j_0}\cup\mathcal{O}_{j_1}$, and if
         $(\overline{\mathcal{O}}_{j_0}\cup\overline{\mathcal{O}}_{j_1})\setminus S=\R^N\setminus S$ we are done because either $\mathcal{O}_{j_0}$ or $\mathcal{O}_{j_1}$  is unbounded so that necessarily $c_0=0$. If on the contrary
         $(\overline{\mathcal{O}}_{j_0}\cup\overline{\mathcal{O}}_{j_1})\setminus S$ is a strict subset of $\R^N\setminus S$, then there is $\mathcal{O}_{j_2}$
         with $\partial\mathcal{O}_{j_2}\cap(\partial\mathcal{O}_{j_1}\cup\partial\mathcal{O}_{j_0})\setminus S\neq\emptyset$. Thus, by definition of piecewise Lipschitzianity, either $(\partial\mathcal{O}_{j_0}\cap\partial\mathcal{O}_{j_2})\setminus S$ or $(\partial\mathcal{O}_{j_1}\cap\partial\mathcal{O}_{j_2})\setminus S$ must contain a connected manifold $\mathcal{G}_1$ which is
         isometric to the graph of a $C^{1,\alpha}$-function, and we conclude as before that $\Phi\equiv c_0$ on $\mathcal{O}_{j_2}$. Continuing in this way we eventually obtain that $\Phi$ is constant on $\R^N\setminus S$ ($=\left(\cup_j\overline{\mathcal{O}}_j\right)\setminus S$) and that the constant is zero
         since $\Phi$ vanishes at infinity, as desired.                           
         We turn to the case where $Q$ contains a $C^{1,\alpha}$ hypersurface,
         say $\Upsilon$. If $\Upsilon\cap\mathcal{O}_j\neq\emptyset$ for some $j$, a subset $\mathcal{G}_1$ of $\Upsilon$ can be found such that
         $\overline{\mathcal{G}}_1\subset \subset\mathcal{O}_j$ and 
          $\mathcal{G}_1$ is (isometric to) the graph of a $C^{1,\alpha}$-function
          defined, say on $B(0,r)$  for some $r>0$. Proceeding as we did above with $\mathcal{G}$, we can imbed $\mathcal{G}$ in $\partial\mathcal{U}$ where
          $\mathcal{U}$ is Hölder-smooth  and $\overline{\mathcal{U}}\subset\mathcal{O}_j$. Since $\nabla\Phi$ vanishes on $\mathcal{G}$ by assumption, we get
          from \cite[Theorem 0.5 \& Remark p. 959]{AdolEsc} that $\Phi$ is constant on $\mathcal{U}$ and we are back to the previous case 
          where $Q$ contains an interior point. If on the contrary $\Upsilon\subset\partial\mathcal{O}_j$ for some $j$, we may find a Hölder-smooth local graph
          $\mathcal{G}_1\subset \partial\mathcal{O}_j\setminus S$, and again we are back to the previous case.
      \end{proof}
      We turn to scenario (S.2), where the injectivity of the measurement operator is guaranteed by the following lemma.
      \begin{lemma}
        \label{S2} Let $\bs{\Sigma}:\R^N\to\R^{N\times N}$ be real analytic
        and satisfy \eqref{sell}. For $S$ a compact slender set, denote by
        $\Phi$ the very weak solution to
        $\mathcal{L}u=\mathrm{div}\,\bs{\mu}$ on $\R^N$
   given by  Theorem \ref{soldim1}, with
   $\bs{\mu}\in \mathcal{M}(S)^N$. If $Q=\overline{V}$ where
   $V$ is  a nonempty bounded open subset of a closed, nonsingular
   real analytic hypersurface $\mathcal{A}$   such that $S\cap \mathcal{A}=\emptyset$, then   $\Phi_{|Q}\equiv0$ or
   $\partial\Phi/\partial n^{\pm}_{|Q}\equiv0$ (either $+$ or $-$)
   implies $\Phi_{|\R^N\setminus S}\equiv0$.
 \end{lemma}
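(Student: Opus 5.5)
The plan is to use analyticity of $\Phi$ off $S$ together with Cauchy--Kovalevskaya/Holmgren-type uniqueness across the analytic hypersurface $\mathcal{A}$. Since $\bs{\Sigma}$ is real analytic and $\Phi_{|\R^N\setminus S}$ is a weak solution of $\mathcal{L}\Phi=0$ in $W^{1,2}_{loc}(\R^N\setminus S)$ (Theorem \ref{soldim1}), elliptic regularity for equations with analytic coefficients (Morrey--Nirenberg) makes $\Phi$ real analytic on the open set $\R^N\setminus S$. Moreover $\R^N\setminus S$ is connected, because $S$ is compact and slender. Hence it suffices to show that $\Phi$ vanishes on some nonempty open subset of $\R^N\setminus S$; analytic continuation then gives $\Phi\equiv0$ on all of $\R^N\setminus S$.

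\textbf{Case $\Phi_{|Q}\equiv0$.} Here $\Phi=0$ on $V$, and $V$ is an open piece of the closed analytic hypersurface $\mathcal{A}$. Since $\mathcal{A}$ is closed and nonsingular, it separates $\R^N$ into components; since $S$ is connected-complement and disjoint from $\mathcal{A}$, $S$ lies on one side. I would prefer not to rely on this separation, however. The key observation is that the restriction of $\Phi$ to $\mathcal{A}\setminus S=\mathcal{A}$ is real analytic on the connected component $\mathcal{A}_0$ of $\mathcal{A}$ containing $V$ (assume $\mathcal{A}$ connected, else pass to that component). It therefore vanishes on all of $\mathcal{A}_0$. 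If $\mathcal{A}_0$ is compact, it bounds a bounded region $D$ by the Jordan--Brouwer theorem. Two sub-cases follow:
\begin{itemize}
\item If $D\cap S=\emptyset$, then $\Phi$ solves $\mathcal{L}\Phi=0$ in $D$ with zero boundary values, so $\Phi\equiv0$ on $D$ by the maximum principle. This gives an open set of vanishing.
\item If $S\subset D$ (recall $S\cap\mathcal{A}=\emptyset$ and the complement of $S$ is connected, so $S$ lies entirely on one side), then $\Phi$ solves $\mathcal{L}\Phi=0$ in the unbounded exterior of $D$, vanishes on $\partial D$ and at infinity (Theorem \ref{soldim1}). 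The maximum principle on large balls $B(0,R)\setminus\overline{D}$, letting $R\to\infty$, again yields $\Phi\equiv0$ there.
\end{itemize}
If $\mathcal{A}_0$ is noncompact, the same exterior-domain maximum principle argument applies on the side not containing $S$.

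\textbf{Case $\partial\Phi/\partial n^{\pm}_{|Q}\equiv0$.} This is the harder case, because Dirichlet data are no longer known. I would first try to reduce it to the previous case via the Cauchy problem. Near a point of $V$, which is noncharacteristic for the elliptic operator, consider the Cauchy data $(\Phi_{|V},\partial_n\Phi_{|V})=(\Phi_{|V},0)$. The idea is to compare $\Phi$ with $\Phi-c$ and show that $\Phi$ is constant on $V$. The tangential gradient is not directly controlled, so this does not follow at once. An alternative is to note that $\partial_n\Phi$ is analytic on $\mathcal{A}_0$, hence vanishes on all of $\mathcal{A}_0$. Then $\Phi$ solves a Neumann problem with zero data on the side not containing $S$: on a bounded side $D$, Green's formula gives $\int_D(\nabla\Phi)^t\bs{\Sigma}\nabla\Phi=0$, so $\Phi$ is constant on $D$. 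On the unbounded side, the same energy identity on $B(0,R)\setminus\overline D$ holds, with a boundary term at $|x|=R$ that tends to $0$ thanks to the decay bounds $|\Phi|\lesssim R^{2-N}$ and $|\nabla\Phi|\lesssim R^{1-N}$. These bounds come from \eqref{compNew} and the gradient estimate behind \eqref{estfreeze}. This makes $\Phi$ constant there, hence $0$, since $\Phi$ tends to $0$ at infinity.

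Either way $\Phi$ is constant on a nonempty open set. For the unbounded region this constant is $0$, and analytic continuation plus decay at infinity forces it to be $0$ everywhere on the connected set $\R^N\setminus S$. The main obstacle I expect is justifying the energy identity on the unbounded side: Green's formula for the weak solution up to $\mathcal{A}$, and the decay of the boundary term at infinity. This requires a regularity statement near $\mathcal{A}$, which is available since everything is analytic there.
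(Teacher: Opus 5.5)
Your overall route matches the paper's. You use analyticity of $\Phi$ on $\R^N\setminus S$, propagate $\Phi=0$ (resp.\ $\partial\Phi/\partial n=0$) from $V$ to the whole component $\mathcal{A}_0$, and apply Jordan--Brouwer. You then use Dirichlet uniqueness (your maximum principle) or an energy identity for the Neumann problem on the side of $\mathcal{A}_0$ free of $S$, and finish by analytic continuation over the connected set $\R^N\setminus S$.

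The gap is the assertion that $S$ lies entirely on one side of $\mathcal{A}_0$ because $\R^N\setminus S$ is connected. Connectedness of the complement says nothing about connectedness of $S$. Your dichotomy ``$D\cap S=\emptyset$ or $S\subset D$'' (and ``the side not containing $S$'' in the noncompact case) therefore misses the case where $S$ meets both components of $\R^N\setminus\mathcal{A}_0$. That case cannot be patched, because the conclusion then fails. Take $\bs{\Sigma}=\mathrm{I}_N$, $\mathcal{A}=\{x_N=0\}$, $\tau$ the reflection across $\mathcal{A}$, $p$ with $p_N>0$, and $S=\{p,\tau p\}$, which is compact and slender. Let $\bs{\mu}=\bs{a}\,(\delta_p-\delta_{\tau p})$ with $\bs{a}\neq0$ parallel to $\mathcal{A}$. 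Since $E(x,y)=E(x,\tau y)$ for $x\in\mathcal{A}$, differentiating in $y$ along $\bs{a}$ gives $\bs{a}\cdot\nabla_yE(x,p)=\bs{a}\cdot\nabla_yE(x,\tau p)$. Hence $\Phi\equiv0$ on $\mathcal{A}$, although $\Phi$ is unbounded near $p$. Likewise $\bs{a}\,(\delta_p+\delta_{\tau p})$ makes $\partial\Phi/\partial n$ vanish on $\mathcal{A}$, and Kelvin inversion gives the same phenomenon for the unit sphere. The paper's proof takes exactly the same unjustified step (``one of them contains $S$ and the other \ldots\ is disjoint from $S$''). So the lemma tacitly needs the extra hypothesis that $S$ lies in a single component of $\R^N\setminus\mathcal{A}_0$, as it does in the scanning-microscopy setting where $Q$ lies on a plane above the sample. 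With that hypothesis added, your argument goes through, modulo the two smaller points below.

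Both smaller points concern the Neumann case.
\begin{itemize}
\item Green's formula produces the boundary term $\int\Phi\,\langle\bs{\Sigma}\nabla\Phi,n\rangle\,d\sigma$. Your hypothesis kills it only if $\partial/\partial n$ is read as the conormal derivative, and the paper makes the same identification. For the Euclidean normal derivative with $\bs{\Sigma}n$ not parallel to $n$, argue instead with the strong maximum principle and Hopf's lemma. A nonconstant $\Phi$ on the $S$-free side would attain a strict extremum at a point of $\mathcal{A}_0$ (using $\Phi\to0$ at infinity when that side is unbounded), and there Hopf forces $\partial\Phi/\partial n\neq0$.
\item The pointwise bound $|\nabla\Phi(x)|\lesssim|x|^{1-N}$ does not follow from \eqref{estfreeze}, which is a gradient estimate in the $y$-variable near $S$. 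At large scales it would need control of $\bs{\Sigma}$ at infinity that analyticity does not provide. You do not need it. Test the equation on the unbounded $S$-free side $\mathcal{U}$ against $\eta_r^2\Phi$, where $\eta_r$ is a cutoff equal to $1$ on $B(0,r)$, supported in $B(0,2r)$, with $|\nabla\eta_r|\lesssim1/r$. This yields $\int_{\mathcal{U}\cap B(0,r)}|\nabla\Phi|^2\,dm\lesssim r^{-2}\int_{\mathcal{U}\cap\{r<|x|<2r\}}\Phi^2\,dm\lesssim r^{2-N}$. It uses only $|\Phi(x)|\lesssim|x|^{2-N}$, which does follow from \eqref{compNew} and \eqref{estfreeze}. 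This handles infinity more economically than the paper, which first proves $\nabla\Phi\in L^2(\mathcal{U})$ and then applies Green's formula on $\mathcal{U}$.
\end{itemize}
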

 \begin{proof}
   The real analyticity of $\Sigma$ implies that $\Phi$ is real analytic
   in $R^N\setminus S$. This well-known fact ensues for instance
   from  \cite[Lemma 2.3 \& Section 5]{MoNi}, once it is observed that
   $\Phi$ is a $C^\infty$-solution in the strong sense to $\mathcal{L}\phi=0$
   on $R^N\setminus S$ because it is a weak solution in $W^{1,2}_{loc}(\R^N\setminus S)$, by Theorem \ref{soldim1}, and  we can use
   \cite[Corollary 8.11]{GiTru}. Restricting to a  component containing a connected component of $V$, we may assume that $\mathcal{A}$ is connected. 
   Then, since $\mathcal{A}$ is analytic and nonsingular,
   $\Phi_{|Q}=0$ (resp.  $\partial\Phi/\partial n^{\pm}_{|Q}\equiv0$) implies
   that  $\Phi\equiv0$ (resp.  $\partial\Phi/\partial n^{\pm}\equiv0$)  on $\mathcal{A}$. By the Jordan-Brouwer theorem\footnote{A version valid for smooth complete,  possibly unbounded surfaces may be found as \cite[Theorem A.1]{BVHNS};
     it carries over    {\it mutatis mutandis} to higher dimension.}
   $\R^N\setminus\mathcal{A}$ has exactly two connected components of which $\mathcal{A}$ is the boundary.
   One of them contains $S$ and the  other, say  $\mathcal{U}$,  is disjoint from $S$. Hence, $\Phi$ is a solution  to the Dirichlet (resp. Neumann) problem $\mathcal{L}\phi=0$ on $\mathcal{U}$,    with zero boundary data. In the first case, the uniqueness of solutions to the Dirichlet
   problem; {\it e.g.} \cite[Corollary 8.2]{GiTru} implies that
   $\Phi_{|\mathcal{U}}\equiv0$. In the second case, we will use that
   solutions of the Neumann
   problem are unique up to an additive  constant, thus
    $\Phi_{|\mathcal{U}}\equiv c$  for some $c\in\R$ whence  $\Phi_{|\R^N\setminus S}\equiv c$ by analyticity, and necessarily $c=0$ since $\Phi$ vanishes at infinity. However,  the standard argument for uniqueness-up-to-a-constant
    is based on the Green  formula:
    \[
\int_{\mathcal{U}}(\nabla\Phi)^t\bs{\Sigma}\nabla\Phi dm=\int_{\mathcal{A}}\Phi\langle \bs{\Sigma}\nabla\Phi,n^+\rangle d\sigma 
\]
where $n^+$ is the normal to $\partial\mathcal{U}=\mathcal{A}$ pointing
outwards \cite[Section 5.5.2]{Demengel},
and in order to invoke it  we  still need to show that $\nabla\Phi$ lies in
$L^2(\mathcal{U})$  even if  $\mathcal{A}$ is  unbounded.
For this, let $r_0$ be large enough that   $S\subset B(0,r_0/2)$,
and pick $R>r_0$. From \eqref{estfreeze} and \eqref{compNew} we get that
$|\nabla_y E(x,y)|\leq C_2(R-r_0)^{2-N}$ for $y\in S$, uniformly with respect to $x\notin B(0,R)$, where $C_2$ depends on $r_0$, the ellipticity constants
and $\|{\mathrm D} \bs{\Sigma}\|_{L^\infty(B(0,r_0),\R^{N\times N})}$, with $\mathrm{D}$
to mean the derivative identified with a matrix-valued function.
Consequently $\nabla_yE(\cdot,y)\in L^2(\R^N\setminus B(0,R))$,
uniformly with respect to $y\in S$ provided that $R>r_0$, and therefore
$\Phi\in L^2(\R^N\setminus B(0,R))$ for such $R$, by \eqref{vwsRv1p}.
This and the analyticity of $\Phi$ off $S$ together show that  $\Phi\in L^2(\Pi)$ for any 
half-plane $\Pi$ whose closure is disjoint from $S$.
Assuming for simplicity that $\Pi=\{\bs{x}:x_n>0\}$ and putting $y_{\rho,\eta}:=(0,\cdots,0,\rho+\eta)^t$ for fixed $\eta>0$, we deduce from Cacciopoli's inequality that
$\|\nabla\Phi\|_{L^2(B(y_{\rho,\eta},\rho),\R^N)}\leq C_3$ where $C_3$ is independent of $\rho$. Letting $\rho\to\infty$ we get that $\nabla\Phi\in L^2(\Pi_\eta,\R^N)$
with $\Pi_\eta=\{\bs{x}:x_n>\eta\}$, and  since $\eta>0$ is arbitrary $\nabla\Phi$ lies in $L^2(\Pi^\prime,\R^N)$ for any 
half-plane $\Pi^\prime$ whose closure is disjoint from $S$. It ensues that 
$\nabla\Phi\in L^2(\R^N\setminus \mathcal{C},\R^N)$ for $\mathcal{C}$ an open  cube containing $S$. Because  $\mathcal{C}\cap\mathcal{U}$ has compact closure
disjoint from $S$ we get that $\nabla\Phi$ is bounded there by analyticity, and finally $\nabla\Phi$  lies in
$L^2(\mathcal{U},\R^N)$ as desired.
   \end{proof}

   \section{Gradients and quasi-gradients on a compact set}
   \label{Rgrad}
   We account in this appendix for Remark \ref{onrange} in Section
   \ref{Sect2.1}.
   For  $S\subset\R^N$, recall from Section \ref{preln} the space
   $\mathcal{G}(S)$ of continuous gradients on $S$ comprised of restrictions $\nabla\varphi_{|_S}$ with $\varphi\in C^{1}(\R^N)$, and its uniform closure in $C(S,\R^N)$ which is the space $\mathcal{Q}(S)$ of quasi-gradients on $S$;
   see \cite[Section 3.1.1]{SmirnovKhavin1998} for their density properties
   in $C(S,\R^N)$ when $S$ is compact.
   In the latter case $\mathcal{M}(S)^N$ is dual to $C(S,\R^N)$
   of which $\mathcal{Q}(S)$ is a closed subspace, and clearly a vector-valued
   measure in
   $\mathcal{M}(S)^N$ is orthogonal to $\mathcal{Q}(S)$ if and only if it is divergence free in the distributional sense on $\R^N$. Thus, by the Hahn-Banach theorem,
   $\mathcal{Q}(S)$ is comprised exactly of those members of
   $C(S,\R^N)$ that integrate to zero against all divergence-free measures in
   $\mathcal{M}(S)^N$. 
   Under rather mild assumptions on $S$, the lemma below implies that 
   quasi gradients are in fact gradients. More precisely, when all rectifiably connected components of $S$ are quasi-convex, Lemma \ref{lemma|constructg}
   implies that every ${\bs h}\in\mathcal{Q}(S)$  is a gradient because
   it integrates to zero against measures of the form $\bs{\pi}(\bs{\gamma})$  with
   $\bs{\gamma}$  a closed rectifiable curve in $S$
   (see definition \eqref{eq|Rgamma|old}), since such measures  are divergence free.
   
   We start with a preliminary observation: if $(E_i)_{i\in I}$ are
   the rectifiably connected components  of a compact set $S$, then  $E_i$ is closed if  it is quasi convex Hence, when  all $E_i$ are quasi convex, they are finite in number,
   for they partition $S$ into disjoint compact sets.
   
\begin{lemma}
	\label{lemma|constructg}
	Let   $S\subset\R^\N$ be a compact set, all rectifiably connected components of which are quasi-convex. 
	Suppose  that $h\in C(S,\R^N)$ integrates to zero against
    all measures    of the form $\bs{\pi}(\bs{\gamma})$  for
    $\bs{\gamma}$  a closed rectifiable curve in $S$,  where $\bs{\pi}(\bs{\gamma})$ is defined in \eqref{eq|Rgamma|old}.
    \\[-.1in]
          
    Then,  there is  a $C^1$-map $g:\R^\N\to\R$ such that $\grad g_{|S}=
    h$.
\end{lemma}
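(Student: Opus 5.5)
The plan is to build a potential $f$ on each rectifiably connected component by path integration, and then extend the pieces to a single $C^1$ function on $\R^N$ by Whitney's extension theorem. By the preliminary observation, $S$ has finitely many rectifiably connected components $E_1,\dots,E_m$, each compact, quasi-convex with constant $C_i$, and pairwise at positive distance. Since the $E_i$ are disjoint compact sets, it suffices to produce, for each $i$, a function $f_i$ on $E_i$ such that the pair $(f_i,h)$ is a Whitney $C^1$-jet on $E_i$. Setting $f=f_i$ on $E_i$ then gives a Whitney jet on $S$: the Whitney condition only concerns nearby points, and points of different components stay a fixed positive distance apart. Whitney's extension theorem then furnishes $g\in C^1(\R^N)$ with $g=f$ and $\nabla g=h$ on $S$.

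To define $f_i$, fix a base point $\bs{x}_i\in E_i$. For $\bs{x}\in E_i$, put $f_i(\bs{x}):=\langle\bs{\pi}(\bs{\gamma}),h\rangle$, where $\bs{\gamma}$ is any rectifiable curve in $E_i$ from $\bs{x}_i$ to $\bs{x}$. Definition \eqref{eq|Rgamma|old} makes sense for $h$ merely continuous on $S$, since the curve stays in $S$. Independence of the chosen curve follows from the hypothesis: if $\bs{\gamma}_1,\bs{\gamma}_2$ are two such curves, concatenating $\bs{\gamma}_1$ with the reversal of $\bs{\gamma}_2$ gives a closed rectifiable curve $\bs{\gamma}$ in $S$. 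Moreover $\bs{\pi}$ is additive under concatenation and changes sign under reversal, so $0=\langle\bs{\pi}(\bs{\gamma}),h\rangle$ is exactly the difference of the two candidate values. By the same additivity, $f_i(\bs{y})-f_i(\bs{x})=\langle\bs{\pi}(\bs{\gamma}),h\rangle$ for any curve $\bs{\gamma}$ in $E_i$ from $\bs{x}$ to $\bs{y}$.

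The key step is the Whitney estimate
\[
|f_i(\bs{y})-f_i(\bs{x})-h(\bs{x})\cdot(\bs{y}-\bs{x})|=o(|\bs{y}-\bs{x}|)
\]
uniformly on $E_i$. Choose, by quasi-convexity, a unit-speed curve $\bs{\gamma}:[0,\ell]\to E_i$ from $\bs{x}$ to $\bs{y}$ with $\ell\le C_i|\bs{y}-\bs{x}|$. Since $\int_0^\ell\bs{\gamma}'(t)\,dt=\bs{y}-\bs{x}$,
\[
f_i(\bs{y})-f_i(\bs{x})-h(\bs{x})\cdot(\bs{y}-\bs{x})=\int_0^\ell\bigl(h(\bs{\gamma}(t))-h(\bs{x})\bigr)\cdot\bs{\gamma}'(t)\,dt .
\]
Every point of the curve lies within distance $\ell$ of $\bs{x}$, so this quantity is bounded by $\ell\,\omega_h(\ell)\le C_i|\bs{y}-\bs{x}|\,\omega_h(C_i|\bs{y}-\bs{x}|)$. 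Here $\omega_h$ is the modulus of continuity of $h$ on the compact set $S$, which tends to $0$ at $0$. The same bound shows that $f_i$ is continuous, indeed Lipschitz with constant $C_i\|h\|_\infty$.

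This estimate is where quasi-convexity is indispensable: without the length bound $\ell\lesssim|\bs{y}-\bs{x}|$ one would not obtain $o(|\bs{y}-\bs{x}|)$. The remaining points are bookkeeping, namely the additivity of $\bs{\pi}$ under concatenation and that the reparametrized concatenation is still a rectifiable curve in the sense of the paper. For Whitney's theorem I would cite the standard $C^1$ version, which requires a continuous jet with the uniform $o(|\bs{y}-\bs{x}|)$ remainder on a compact set.
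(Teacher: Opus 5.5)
The construction on each component is exactly the paper's: the path-integral potential, well-definedness from the vanishing of $\langle\bs{\pi}(\bs{\gamma}),h\rangle$ on closed curves, the remainder bound $\ell\,\omega_h(\ell)$ with $\ell\le C_i|\bs{y}-\bs{x}|$, then Whitney. Gluing the componentwise jets into one jet on $S$, instead of using a partition of unity, is an immaterial variation.

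\textbf{The gap.} It is your opening reduction: that there are finitely many rectifiably connected components, pairwise at positive distance. Relative closedness in the compact set $S$ only makes them disjoint compact sets, and a compact set can be partitioned into infinitely many of those. The paper's ``preliminary observation'' makes the same inference, so its proof has the same hole.

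\textbf{The statement fails without an extra hypothesis.} Let $\bb{e}_1,\bb{e}_2$ be the first two canonical unit vectors and
\[
S:=\bigcup_{n\geq0}L_n,\qquad L_0:=\{t\bb{e}_1:\,t\in[0,1]\},\qquad L_n:=\{t\bb{e}_1+\tfrac1n\bb{e}_2:\,t\in[0,1]\}\ \ (n\geq1),\qquad h(\bs{x}):=x_1\bb{e}_2 .
\]
\begin{itemize}
\item Each $L_n$ with $n\geq1$ is open and closed in $S$, so the $L_n$ are the (rectifiably) connected components of $S$.
\item They are segments, hence quasi-convex with a common constant.
\item A closed curve $\bs{\gamma}$ in $S$ stays in one $L_n$. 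So $\gamma_2$ is constant and $\langle\bs{\pi}(\bs{\gamma}),h\rangle=\int\gamma_1\gamma_2'\,dt=0$, i.e.\ the hypothesis holds.
\end{itemize}
Now suppose $g\in C^1(\R^N)$ satisfied $\nabla g=h$ on $S$. Then $\partial_1 g=0$ on $L_n$ forces $g\equiv c_n$ there, whence
\[
t=h_2(t\bb{e}_1)=\partial_2 g(t\bb{e}_1)=\lim_{n\to\infty}n\,(c_n-c_0)\qquad\text{for every }t\in[0,1].
\]
This is absurd, since the right-hand side does not depend on $t$. In terms of your proof: because $\mathrm{dist}(L_n,L_0)=1/n\to0$, no choice of additive constants on the components makes the glued jet satisfy the Whitney condition across components.

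\textbf{Repair.} The lemma needs an additional hypothesis, for instance that $S$ has only finitely many rectifiably connected components. Under it your argument is complete and coincides with the paper's. This covers in particular a single quasi-convex component, which is all the componentwise requirement in item (3) of Conditions~U calls for.
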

\begin{proof}
  Since $S$ is a finite union of disjoint quasi convex compact sets by a
  preliminary observation, it is enough by a partition of unity argument to establish the result for each rectifiably connected component. Thus, we assume that $S$ itself is quasi convex. 
    Fix $z_0$ in $S$ and, for  $z\in S$, let $\bs{\gamma}_z:[a,b]\to S$ be
    a rectifiable curve
    such that $\bs{\gamma}_z(a)=z_0$ and $\bs{\gamma}_z(b)=z$.
    Define a member of
    $\mathcal{M}(S)^\N$ by
    $\bs{\mu}_z:=\bs{\pi}(\bs{\gamma}_z)$,
    and put
    \[
		g(z):=\int_a^b {{\bs h}(\bs{\gamma}_z(t))}\cdot\bs{\gamma}_z^\prime(t)\,dt=\langle {\bs h},\bs{\mu}_z\rangle.
    \]
	Note that $g(z)$ does not depend on the choice of $\bs{\gamma}_z$ joining
	$z_0$ to $z$, by our assumption that $\int{\bs h}\cdot d\bs{\pi}(\widetilde{\bs{\gamma}})=0$ whenever $\widetilde{\bs{\gamma}}$ is a closed curve in $S$.
Now, for $z_1,z_2\in S$, we can write
\[
  g(z_2)=g(z_1)+{\bs h}(z_1)\cdot(z_2-z_1)+R(z_1,z_2)\]
where
\[
  R(z_1,z_2):=\int_{t_1}^{t_2}\left({{\bs h}(\bs{\gamma}(t))} -{{\bs h}(z_1)}\right)\cdot\bs{\gamma}^\prime(t)\,dt  \]
with $\gamma:[t_1,t_2]\to S$ an arc of length at most $C|z_1-z_2|$ joining $z_1$ to $z_2$; such an arc indeed exists, by the quasi-convexity of $S$.
From the continuity of ${\bs h}$ and the bound on the length of $\bs{\gamma}$,
it follows that $R(z_1,z_2)=o(|z_1-z_2|)$ as $z_1,z_2\to z\in S$ and therefore,
by the Whitney extension theorem \cite[Appendix A]{AbRo}, $g$ extends to a $C^1$
function on $\R^\N$ whose gradient on $S$ coincides with $\bs h$.
\end{proof}
We point out that Lemma \ref{lemma|constructg} does not hold for arbitrary  compact  $S\subset\R^N$. Consider indeed a plane topologist's sine curve:
\[\Gamma:=\{(x_1,x_2,0,\cdots,0)^t:x_1\in(0,1],\ x_2=\sin\frac{1}{x_1}\}\subset\R^N,\]
and define $S:=\Gamma\cup T$ where $T:=\{(0,x_2,0,\cdots,0)^t:\ x_2\in[0,1]\}$ is the segment joining $0$ to $(0,1,0\cdots,0)^t$ along the $x_2$ axis. Then $S$ is compact and has two rectifiably connected components, one of which ($\bs{\Gamma})$ is not quasi-convex. Now,  a divergence-free measure on $\R^N$ supported on $S$ must be divergence free also when restricted to the plane $P:=\{(x_1,x_2,0,\cdots,0)^t\}$, hence  it can be represented as a superposition of measures $\bs{\pi}(J)$ with $J\subset P$ a Jordan curve; see  \cite[p. 843]{Smi94} and  \cite{BG} or \cite[Theorem 2.8]{BVH} for details. Consequently $S$ carries no nonzero  divergence-free measure, because a Jordan curve supported on
$\Gamma$ or $T$ must roll back over itself and cancel out
the integral defining $\bs{\pi}( J)$.  In particular, \emph{any} ${\bs h}\in C(S,\R^N)$ integrates to zero against divergence-free measures in $\mathcal{M}(S)^N$. However,  if we pick ${\bs h}(\bs{x})=0$ when $\bs{x}\in T$ and
${\bs h}(\bs{x})=x_1{\bs v}(\bs{x})$ with ${\bs v}$ the unit tangent to
$\bs{\Gamma}$ oriented towards decreasing $x_1$ when  $\bs{x}\in \bs{\Gamma}$,
then ${\bs h}$ is continuous  and tangent to $\bs{\Gamma}$
but its integral along the latter is infinite. Thus, $\bs h$ 
cannot be the restriction to $S$  of the gradient of a  $C^1(\R^N)$-function.

\section{Declarations}
\subsection{Funding}
This research was funded in part by the Austrian Science Fund (FWF) SFB 10.55776/F68 ``Tomography Across the Scales". For open access purposes, the author has applied a CC BY public copyright license to any author-accepted manuscript version arising from this submission.
\subsection{Data Availability Statement}
No datasets were generated or analysed during the current study

\end{appendices}

\bibliography{C2D2}

\end{document}